%------------------------------------------------------------------------------
% Beginning of journal.tex
%------------------------------------------------------------------------------
%
% AMS-LaTeX version 2 sample file for journals, based on amsart.cls.
%
%        ***     DO NOT USE THIS FILE AS A STARTER.      ***
%        ***  USE THE JOURNAL-SPECIFIC *.TEMPLATE FILE.  ***
%
% Replace amsart by the documentclass for the target journal, e.g., tran-l.
%
\documentclass[final]{amsart}

\usepackage{epstopdf}
\usepackage{amstext,amssymb,amsmath,amsbsy,mathrsfs,mathtools,amsfonts,amscd,exscale}
\usepackage{graphics, epstopdf}
\usepackage{color}
\usepackage{amsmath, amssymb}
\usepackage{enumerate}
\usepackage{bigints}
\usepackage{verbatim}
\usepackage{cleveref}

\usepackage[notcite,notref,color]{showkeys}

\usepackage{graphicx}
\usepackage{epstopdf}
\usepackage{subcaption}
\usepackage{float}
\usepackage{placeins}
\usepackage{bm}
\usepackage{url}

%%%%%%%%%%%%%%%%%%%%%%%%%%%%%%%%%
%           Algorithm
%%%%%%%%%%%%%%%%%%%%%%%%%%%%%%%%%

\usepackage{algpseudocode,algorithm,algorithmicx}

%%%%%%%%%%%%%%%%%%%%%%%%%%%%%%%%%

\renewcommand{\sc}{\textsc}

%%%%%%%%%%%%%%%%%%%%%%%%%%%%%%%%%

\def\l{\left}
\def\r{\right}

\def\mZd{\mathbb{Z}^d}
\def\mR{\mathbb{R}}
\def\mRd{\mathbb{R}^d}
\def\mS{\mathbb{S}}
\def\St{\mathbb S_{\theta}}
\def\vt{v_{\theta}}
\def\mC{\mathcal{C}}
\def\mK{\mathcal{K}}
\def\Oc{\overline{\Omega}}
\def\Oh{\Omega_h}
\def\Ohc{\overline{\Omega}_h}
\def\Ohs{\Omega_{h,s}}
\def\Ohd{\Omega_{h,\delta}}
\def\Ohdd{\Omega_{h,d\delta}}
\def\pO{\partial\Omega}
\def\Th{\mathcal{T}_h}
\def\Vh{\mathbb{V}_h}
\def\Nh{\mathcal{N}_h}
\def\Nhi{\mathcal{N}_h^0}
\def\Nhb{\mathcal{N}_h^b}
\def\Nhv{\mathcal{N}_h^v}
\def\Nhe{\mathcal{N}_h^e}
\def\Nv{\mathcal{N}^v}
\def\Ne{\mathcal{N}^e}
\def\ve{\varepsilon}
\def\Tve{T_{\varepsilon}}
\def\uve{u_{\varepsilon}}
\def\MA{Monge-Amp\`{e}re }

\def\nCka{C^{k,\alpha}}
\def\nCkao{C^{k,\alpha}(\overline{\Omega})}
\def\Cka{C^{2+k,\alpha}}

\def\Wti{W^2_{\infty}}

\def\Czeroa{C^{0,\alpha}}
\def\Conea{C^{1,\alpha}}
\def\Czeroao{C^{0,\alpha}(\overline{\Omega})}

\def\interp{\mathcal I_h}
\def\lam1{\lambda_1}
\def\lamh1{\lambda_{1,\ve}}
\def\sdd{\nabla^2_{\delta}} %% Second Difference Delta
\newcommand{\sd}[2]{\frac{\partial^2 #1}{\partial {#2}^2}}
\newcommand{\wt}[1]{\widetilde{#1}}

\def\ba{\bm{\alpha}}
\def\mA{\mathcal{A}}
\def\bu{\bm{u}}
\def\bw{\bm{w}}
\def\mRN{\mathbb{R}^N}
\newcommand{\argmax}{\textrm{arg\,max}}

\def\dist{\textrm{dist}}
\def\conv{\textrm{conv}}

%    Blank box placeholder for figures (to avoid requiring any
%    particular graphics capabilities for printing this document).

%%%%%%%%%%%%%%%%%%%%%%%%%%%%%%%%%
% Theorem, Lemma ...
%%%%%%%%%%%%%%%%%%%%%%%%%%%%%%%%%

\newtheorem{Theorem}{Theorem}[section]
\newtheorem{Lemma}[Theorem]{Lemma}
\newtheorem{Proposition}[Theorem]{Proposition}
\newtheorem{Corollary}[Theorem]{Corollary}

\theoremstyle{definition}

\newtheorem{example}{Example}
\numberwithin{example}{section}

\newtheorem{remark}[Theorem]{Remark}

\numberwithin{equation}{section}

\usepackage{amsthm}
\usepackage{thmtools}

%\declaretheoremstyle[
%notefont=\bfseries, notebraces={}{}{},
%bodyfont=\normalfont,
%headformat=\NAME~\NUMBER:\NOTE
%]{myexamplestyle}

%\declaretheorem[
%    style=myexamplestyle,
%    name=Example,
%    numberwithin=section
%]{example}

%%%%%%%%%%%%%%%%%%%%%%%%%%%%%%%%%
%           Color
%%%%%%%%%%%%%%%%%%%%%%%%%%%%%%%%%

\definecolor{red}{rgb}{1,0,0}
\definecolor{blue}{rgb}{0,0,1}
\definecolor{applegreen}{rgb}{0.55,0.71,0}

%%%%%%%%%%%%%%%%%%%%%%%%%%

%%%%%%%%%%%%%%%%%%%%%%%%%%%%%%%%%%%%%%%%%%%%%%%%%%%%%%%%%%%%%%%%%%%%%%%%%%%%%%
\begin{document}
%%%%%%%%%%%%%%%%%%%%%%%%%%%%%%%%%%%%%%%%%%%%%%%%%%%%%%%%%%%%%%%%%%%%%%%%%%%%%%
\title[Two-Scale Methods for Convex Envelopes]{Two-Scale Methods for Convex Envelopes}

%    Information for first author
\author{Wenbo Li}
\address[Wenbo Li]{Department of Mathematics, University of Maryland, College Park, Maryland 20742}
\email[Wenbo Li]{wenboli@math.umd.edu}

%    Information for second author
\author{Ricardo H. Nochetto}
\address[Ricardo H. Nochetto]{Department of Mathematics, University of Maryland, College Park, Maryland 20742}
\email[Ricardo H. Nochetto]{rhn@math.umd.edu}
\thanks{Both authors were partially supported by the NSF Grant DMS -1411808. W. Li was also partially supported by the Patrick and Marguerite Sung Fellowship in Mathematics.}

\begin{abstract}
We develop two-scale methods for computing the convex envelope of 
a continuous function over a convex domain in any dimension.
This hinges on a fully nonlinear obstacle formulation \cite{Ob1}. We prove convergence and error estimates in the max norm. The proof utilizes a discrete comparison principle, a discrete barrier argument to deal with Dirichlet boundary values, and the property of flatness in one direction within the non-contact set. Our error analysis extends to a modified version of the finite difference wide stencil method of \cite{Ob2}.
\end{abstract}
\maketitle
\textbf{Key words.} Convex envelope, fully-nonlinear obstacle, two-scale method,
monotone, pointwise error estimates, H\"older regularity, flatness.

\vspace{0.2cm}
	
\textbf{AMS subject classifications.} 65N06, 65N12, 65N15, 65N30; 35J70, 35J87.

%%%%%%%%%%%%%%%%%%%%%%%%%%%%%%%%%%%%%%%%%%%%%%%%%%%%%%%%%%%%%%%%%%%%%%%%%%%%%%
\section{Introduction}
%%%%%%%%%%%%%%%%%%%%%%%%%%%%%%%%%%%%%%%%%%%%%%%%%%%%%%%%%%%%%%%%%%%%%%%%%%%%%%
Given an open set $\Omega \subset \mRd$ and a continuous function
$f: \Oc \rightarrow \mR$, its convex envelop in $\Omega$ is defined
as
\begin{equation}\label{E:def-CE}
u(x) = \sup\l\{ l(x): l \leq f \text{ in } \Oc, \; l \text{ is affine} \r\},
\end{equation}
which in fact is the largest convex function majorized by $f$ in $\Oc$.
This function $u$ can also be viewed as the viscosity solution of the following fully nonlinear, degenerate elliptic PDE introduced by Oberman \cite{Ob1}
\begin{equation}\label{E:pde-int-CE}
T[u;f](x) := \min\l\{ f(x) - u(x),  \lam1[D^2u](x)\r\} = 0,
\end{equation}
where $\lam1[D^2u]$ denotes the smallest eigenvalue of the Hessian $D^2u$.
This is the complementarity form of the fully nonlinear obstacle problem at hand.
\begin{figure}[!htb]
\vspace{-10pt}
\includegraphics[width=0.8\textwidth]{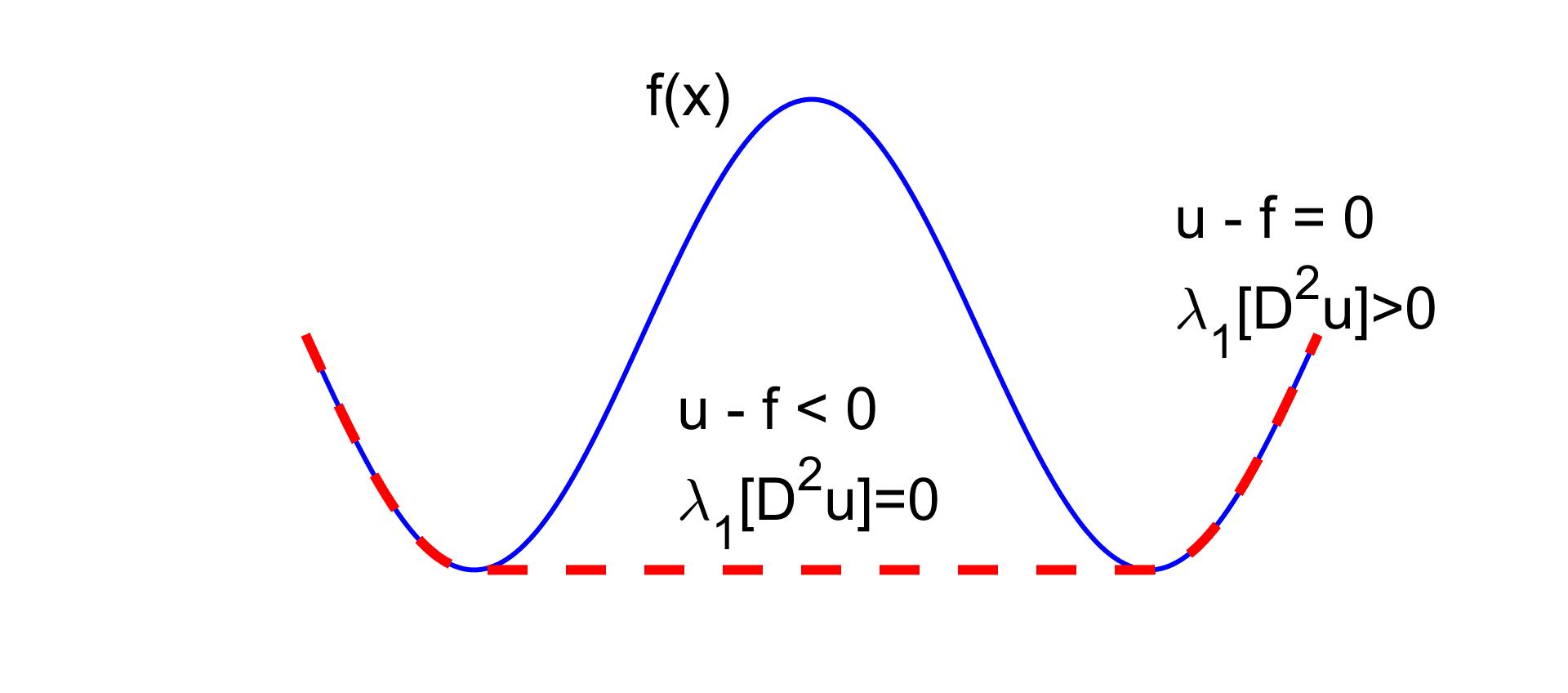}
\vspace{-20pt}
\caption{\small Illustration of the equation \eqref{E:pde-int-CE}. In the non-contact set $\{u < f\}$, the function $u$ must be flat in one direction, i.e. $\lam1[D^2u] = 0$.}
\label{F:pde_CE}
\end{figure}
\Cref{F:pde_CE} illustrates the pde
formulation \eqref{E:pde-int-CE}. Roughly speaking,
in the contact set 
\[
\mC(f) := \l\{ x \in \Oc: u(x) = f(x) \r\},
\]
we have the equality $u = f$ and the inequality $\lam1[D^2u] \geq 0$
given by the convexity of $u$. Outside the contact set, we have
$u < f$ and that $u$ is flat in at least
one direction which implies $\lam1[D^2u] = 0$.

In this paper, we consider the case $\Omega$ bounded and
strictly convex, which guarantees the Dirichlet boundary condition
$u = f$ on $\pO$ is attained. Therefore the convex envelope $u$ of $f$ is the viscosity solution of the following problem:
\begin{equation} \label{E:pde-CE}
\left\{
\begin{aligned}
T[u;f](x) = \min\l\{ f(x) - u(x),  \lam1[D^2u](x)\r\} = 0  & \quad {\rm in} \;\; \Omega,
\\ u =f & \quad {\rm on} \;\; \pO.
\end{aligned}
\right.
\end{equation}

The regularity study of convex envelopes dates back to \cite{TrudUrbas1984, CaNiSp}, thus before the PDE formulation \eqref{E:pde-CE} of \cite{Ob1}. 
However, the problem considered in \cite{TrudUrbas1984, CaNiSp} is a Dirichlet problem for the degenerate \MA equation, $\det(D^2u)=0$, which corresponds to the convex envelope of function $f$ given on the boundary $\pO$ as a Dirichlet condition.
For the convex envelope $u$ in \eqref{E:def-CE}, De Philippis and Figalli \cite{DeFi} obtained recently the
optimal regularity $u \in C^{1,1}(\Oc)$ under the assumption that $\Omega$ is a uniformly convex domain of class $C^{3,1}$ and $f \in C^{3,1}(\Oc)$.

There are a handful of papers regarding the numerical
approximation of convex envelopes. Oberman \cite{Ob2} proposed a
wide stencil method to approximate \eqref{E:pde-int-CE}. Dolzmann \cite{Dolzmann} developed a method to compute rank-one convex envelopes, a related notion of critical importance in materials science. Dolzmann and Walkington \cite{DolzWalk} proved an $O(h^{1/3})$ rate of convergence. Finally, Bartels \cite{Bartels} improved the error estimate of \cite{DolzWalk} to $O(h)$ upon increasing the number of directions and function evaluations within elements,
thus at the expense of extra computational cost.

In this paper, we construct and study
a two-scale method for \eqref{E:pde-CE}, which is
somewhat related to the wide stencil method of \cite{Ob2}. Two-scale methods
are developed in \cite{NoNtZh1}, whereas suboptimal pointwise error estimates are derived in
\cite{NoNtZh2} and optimal ones in \cite{LiNo}. We prove existence, uniqueness, and uniform convergence, as well
as pointwise error estimates under realistic regularity assumptions on $u$. Our proof hinges on a discrete comparison principle and discrete barrier functions, and is thus classical. However, we exploit that $u$ is flat in at least one direction outside the contact set $\mC(f)$ \cite{CaNiSp, ObSi}, a crucial
property that plays an essential role in dealing with low regularity of $u$. Our techniques extend to a modified wide stencil method obtained from that in \cite{Ob2} upon adding a two-scale structure.

The remainder of this paper is organized as follows. In section 2,
we introduce the two-scale method for convex envelope problem \eqref{E:pde-CE} and
prove several properties of it. In section 3, we prove our main
error estimate in the $L^{\infty}$ norm after
reviewing geometric properties of $u$ and studying
the consistency error. We next extend our analysis to a modified wide stencil method
in section 4. We conclude in section 5 with numerical experiments which illustrate the performance
of the two-scale methods and compare with theory.

%%%%%%%%%%%%%%%%%%%%%%%%%%%%%%%%%%%%%%%%%%%%%%%%%%%%%%%%%%%%%%%%%%%%%%%%%%%%%%%%%%
\section{Two-Scale Method} \label{S:TwoSc}
%%%%%%%%%%%%%%%%%%%%%%%%%%%%%%%%%%%%%%%%%%%%%%%%%%%%%%%%%%%%%%%%%%%%%%%%%%%%%%%%%%
In this section, we extend the two-scale method developed in
\cite{NoNtZh1} to solve \eqref{E:pde-CE}, and prove several important
properties including convergence.

%---------------------------------------------------------------------------------
\subsection{Definition of the Two-Scale Method} \label{S:IntroTwoSc}
%---------------------------------------------------------------------------------
%
Let $\{\Th\}$ be a sequence of meshes made of closed simplices $T$. Let $\Th$
be shape-regular and quasi-uniform
with mesh size $h$ and shape-regular constant $\sigma$, i.e.
\begin{equation}\label{E:shape-regularity}
\max_{h} \ \max_{T \in \Th} \ \frac{h_T}{\rho_T} \le \sigma,
\end{equation}
where $h_T$ denotes the diameter of $T$ and
$\rho_T$ the diameter of the largest ball inscribed in $T$.
Let $\Oh$ be the interior of the union of elements $T\in\Th$,
$\Nh$ be the nodes of $\Th$,
$\Nhb := \{x_i \in \Nh: x_i \in \partial \Omega\}$
be the boundary nodes and
$\Nhi := \Nh \setminus \Nhb$ be the interior nodes;
since we require that
$\Nhb \subset \partial \Omega$ we deduce that $\Omega_h\subset\Omega$ is also convex. Let
$\Vh$ be the space of continuous piecewise linear functions over $\Th$.

Before introducing the two-scale method we need additional notation.
Let $\mS$ be the unit sphere in $\mRd$.
We consider a finite discretization $\St \subset \mS$ of $\mS$ governed by the
parameter $\theta$: given any
$v \in \mS$, there exists $v^\theta \in \St$ such that
\begin{equation*}
|v - v^{\theta} | \leq \theta.
\end{equation*}

Let the meshsize $h$ be the fine scale and $\delta \geq h$ (to be
chosen later) be the coarse scale.
For every $x_i\in\Nhi$, let
\begin{equation}\label{E:deltai}
\delta_i := \min\big\{\delta,\dist(x_i,\partial\Omega_h)\big\},
\end{equation}
and observe that $\delta_i\ge C(\sigma) h$ and the open ball
$B(x_i,\delta_i)$ centered at $x_i$ with radius $\delta_i$ is
contained in $\Omega_h$.
For any function $w \in C(\Ohc)$, in particular for $w \in \Vh$,
let the centered second difference operator be
\begin{equation} \label{E:2Sc2Dif}
\sdd w(x_i;v) :=   \frac{ w(x_i+ \delta_i v) -2 w(x_i) + w(x_i- \delta_i v) }{ \delta_i^2}
\end{equation}
and note that it is well defined for all $x_i\in\Nhi$ and $v \in \mS$.
Since
\begin{equation}\label{E:lam1}
\lambda_1[D^2 w](x) = \min_{v \in \mS} \partial_{vv}^2 w(x),
\end{equation}
we consider the following approximation of $\lambda_1[D^2 w]$ at $x=x_i \in \Nhi$
\begin{equation*}
\lambda_1[D^2 w](x_i) \approx \min_{v \in \St} \sdd w(x_i;v).
\end{equation*}
If $\ve := (h,\delta,\theta)$ encodes the discretetization parameters, our two-scale operator $\Tve$
for the convex envelope problem \eqref{E:pde-int-CE} is finally given by
\begin{equation}\label{E:disc-oper}
\Tve[w_h;f](x_i) = \min\l\{f(x_i) - w_h(x_i), \;
\min_{v \in \St} \sdd w_h(x_i;v) \r\}
\quad\forall \,  x_i \in \Nhi
\end{equation}
for any $w_h\in\Vh$. The corresponding two-scale method reads:
seek $\uve \in \Vh$
\begin{equation} \label{E:2ScOp}
\Tve[\uve;f](x_i) = 0 \quad \; \forall x_i \in \Nhi,
\end{equation}
and $\uve(x_i) = f(x_i)$ for all $x_i \in \Nhb$. We say that $w_h \in \Vh$ is
a discrete subsolution (supersolution) of \eqref{E:2ScOp} if
\[
\Tve[w_h; f](x_i) \ge 0 \ (\le 0) \quad \forall x_i \in \Nhi \ ; \quad
w_h(x_i) \le (\ge) f(x_i) \quad \forall x_i \in \Nhb .
\]
Therefore, a discrete solution of \eqref{E:2ScOp} is both a discrete sub and
supersolution.

Although this discrete solution $\uve$ fails to be convex in general,
it is still discretely convex, which is a notion of approximate
convexity introduced in \cite{NoNtZh1}. We say that $w_h \in \Vh$
is \textit{discretely convex} \cite{NoNtZh1} if
\begin{equation*}
\sdd w_h(x_i;v) \geq 0 \qquad \forall x_i \in \Nhi, \quad
\forall v \in \St.
\end{equation*}

%----------------------------------------------------------------------------------
\subsection{Discrete Comparison Principle}\label{S:DCP}
%----------------------------------------------------------------------------------
One important feature of the definition \eqref{E:disc-oper} of the discrete operator $\Tve$ is its monotonicity. This is similar to the two-scale method for \MA equation in \cite[Lemma 2.3]{NoNtZh1}.

\begin{Lemma}[monotonicity] \label{L:Monotonicity}
Let $x_i \in \Nhi$ be an interior node and $u_h,w_h \in \Vh$. If
$u_h(x_i) \geq w_h(x_i)$ and
\begin{equation*}
\sdd u_h(x_i;v) \leq \sdd w_h(x_i;v)
\end{equation*}
for any $v \in \St$, then
\begin{equation*}
\Tve[u_h;f](x_i) \leq \Tve[w_h;f](x_i).
\end{equation*}
In particular, if $u_h - w_h$
attains a non-negative maximum at $x_i$, then
\begin{equation*}
\Tve[u_h;f](x_i) \leq \Tve[w_h;f](x_i).
\end{equation*}
\end{Lemma}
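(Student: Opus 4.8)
The plan is to reduce everything to the elementary observation that the binary minimum is monotone in each of its arguments: if $a_1 \le b_1$ and $a_2 \le b_2$, then $\min\{a_1,a_2\} \le \min\{b_1,b_2\}$. Indeed, assuming without loss of generality that $\min\{b_1,b_2\}=b_1$, one has $\min\{a_1,a_2\}\le a_1\le b_1=\min\{b_1,b_2\}$. I would apply this with the two arguments of $\Tve$ at $x_i$, namely $a_1=f(x_i)-u_h(x_i)$ and $a_2=\min_{v\in\St}\sdd u_h(x_i;v)$, together with the analogous quantities $b_1,b_2$ built from $w_h$.

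For the first statement, I would verify the two inequalities $a_1\le b_1$ and $a_2\le b_2$ directly from the hypotheses. The assumption $u_h(x_i)\ge w_h(x_i)$ gives $f(x_i)-u_h(x_i)\le f(x_i)-w_h(x_i)$, that is $a_1\le b_1$. The pointwise bound $\sdd u_h(x_i;v)\le\sdd w_h(x_i;v)$, valid for every $v\in\St$, passes to the minimum over the finite set $\St$: choosing $v^\ast$ that realizes the minimum for $w_h$, one gets $b_2=\sdd w_h(x_i;v^\ast)\ge\sdd u_h(x_i;v^\ast)\ge a_2$, so $a_2\le b_2$. The claimed inequality $\Tve[u_h;f](x_i)\le\Tve[w_h;f](x_i)$ is then exactly the monotonicity of the binary minimum.

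For the second statement, I would show that the non-negative-maximum hypothesis implies the two hypotheses of the first statement. Writing $g:=u_h-w_h\in\Vh$, the assumption that $g$ attains a non-negative maximum at $x_i$ gives immediately $g(x_i)\ge 0$, hence $u_h(x_i)\ge w_h(x_i)$. For the second-difference inequality I would use the linearity of $\sdd$ in its first argument to write
\[
\sdd u_h(x_i;v)-\sdd w_h(x_i;v)=\sdd g(x_i;v)=\frac{g(x_i+\delta_i v)-2g(x_i)+g(x_i-\delta_i v)}{\delta_i^2}.
\]
Since $x_i\in\Nhi$ and $\delta_i\le\dist(x_i,\partial\Omega_h)$, the sample points $x_i\pm\delta_i v$ lie in $B(x_i,\delta_i)\subset\Oh$, so $g$ is defined there; because $x_i$ is a global maximizer of $g$, both $g(x_i\pm\delta_i v)\le g(x_i)$. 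The numerator is therefore $\le 0$, giving $\sdd g(x_i;v)\le 0$, i.e. $\sdd u_h(x_i;v)\le\sdd w_h(x_i;v)$ for all $v\in\St$, and an appeal to the first statement closes the argument.

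This proof is entirely elementary, and I do not expect a serious obstacle; the only two points to keep straight are bookkeeping. First, the admissibility of the stencil points $x_i\pm\delta_i v$ must be invoked before evaluating $g$ there, which is guaranteed by the construction of $\delta_i$ in \eqref{E:deltai}. Second, the phrase ``$u_h-w_h$ attains a non-negative maximum at $x_i$'' must be read as a \emph{global} maximum over the set on which the two functions are compared, so that the inequalities $g(x_i\pm\delta_i v)\le g(x_i)$ are genuinely available; this is precisely the setting in which the lemma will later be used within the discrete comparison principle.
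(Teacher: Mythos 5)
Your proposal is correct and follows essentially the same route as the paper's proof: both parts reduce to the monotonicity of the minimum in each argument, and the second claim is derived by checking that a non-negative global maximum of $u_h-w_h$ at $x_i$ yields the two hypotheses of the first claim via the sign of the centered second difference of $u_h-w_h$. Your added bookkeeping (admissibility of the stencil points $x_i\pm\delta_i v$ and the global reading of ``maximum'') is sound and consistent with the paper's use of $z\in\Ohc$.
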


\begin{proof}
If $\sdd u_h(x_i;v) \leq \sdd w_h(x_i;v)$ for any $v \in \St$, then
\begin{equation*}
\min_{v \in \St} \sdd u_h(x_i;v) \le \min_{v \in \St} \sdd w_h(x_i;v).
\end{equation*}
Recalling the definition \eqref{E:disc-oper} of $\Tve$ and combining
with the fact $u_h(x_i) \geq w_h(x_i)$, this implies
\begin{equation*}
\Tve[u_h;f](x_i) \le \Tve[w_h;f](x_i).
\end{equation*}

On the other hand, if $u_h - w_h$
attains a non-negative maximum at $x_i \in \Nhi$, then we
have $u_h(x_i) \geq w_h(x_i)$ and
\begin{equation*}
u_h(x_i) - w_h(x_i) \geq u_h(z) - w_h(z) \quad \forall z \in \Ohc.
\end{equation*}
By definition \eqref{E:2Sc2Dif} of operator $\sdd$, we obtain
 \begin{equation*}
\sdd u_h(x_i;v) \leq \sdd w_h(x_i;v) \quad \forall v \in \St,
\end{equation*}
and thus use the previous result to conclude the proof.
\end{proof}

Monotonicity leads to the following discrete comparison principle.

\begin{Lemma}[discrete comparison principle] \label{L:DCP}
	Let $u_h,w_h \in \Vh$ with $u_h(x_i) \leq w_h(x_i)$
    for all $x_i \in \Nhb$ and
	\begin{equation}\label{E:comparison}
	\Tve[u_h;f](x_i) \geq \Tve[w_h;f](x_i) \quad \forall x_i \in \Nhi.
	\end{equation}
	Then, $u_h \leq w_h$ in $\Omega_h$.
\end{Lemma}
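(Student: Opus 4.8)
The plan is to argue by contradiction, in the spirit of a discrete maximum principle, using the monotonicity of \Cref{L:Monotonicity} together with the degenerate structure of $\Tve$. Set $g := u_h - w_h \in \Vh$ and suppose, contrary to the claim, that $M := \max_{\Ohc} g > 0$. Since $g$ is continuous and piecewise linear, this maximum is attained at some node; because $g(x_i) \le 0 < M$ at every boundary node $x_i \in \Nhb$ by hypothesis, every maximizing node lies in the interior set $\Nhi$. Denote by $\mathcal N^{*} := \{x_i \in \Nh : g(x_i) = M\} \subset \Nhi$ the (finite, nonempty) set of maximizing nodes.

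First I would extract a flat direction at each maximizing node. Fix $x_i \in \mathcal N^{*}$. Since $g \le M = g(x_i)$ on $\Ohc$ and $x_i \pm \delta_i v \in \Ohc$ for every $v \in \St$ (recall $B(x_i,\delta_i) \subset \Omega_h$), the definition \eqref{E:2Sc2Dif} gives $\sdd u_h(x_i;v) \le \sdd w_h(x_i;v)$ for all $v \in \St$. \Cref{L:Monotonicity} then yields $\Tve[u_h;f](x_i) \le \Tve[w_h;f](x_i)$, which combined with the hypothesis \eqref{E:comparison} forces equality $\Tve[u_h;f](x_i) = \Tve[w_h;f](x_i)$. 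Writing out the two minima in \eqref{E:disc-oper} and using $f(x_i) - u_h(x_i) < f(x_i) - w_h(x_i)$ (because $g(x_i) = M > 0$), a short case analysis shows that both minima must be realized by the second-difference terms and that $\min_{v} \sdd u_h(x_i;v) = \min_{v} \sdd w_h(x_i;v)$. Choosing $v^{*} \in \St$ attaining the latter minimum and recalling $\sdd u_h(x_i;v^{*}) \le \sdd w_h(x_i;v^{*})$, I obtain $\sdd g(x_i;v^{*}) = 0$. Since both neighboring values satisfy $g(x_i \pm \delta_i v^{*}) \le M$ while their average equals $g(x_i) = M$, both must in fact equal $M$; that is, $x_i \pm \delta_i v^{*} \in \{g = M\}$.

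Next I would propagate this flatness toward the boundary by an extremal-node argument. Because $\St$ is finite, I can fix a unit vector $e \in \mS$ with $v \cdot e \ne 0$ for every $v \in \St$. Let $x_i \in \mathcal N^{*}$ maximize $x_i \cdot e$ over the finite set $\mathcal N^{*}$, and pick the sign of $v^{*}$ so that $v^{*} \cdot e > 0$; then $y := x_i + \delta_i v^{*}$ satisfies $g(y) = M$ and $y \cdot e > x_i \cdot e$. The point $y$ lies in some simplex $T \in \Th$, on which $g$ is affine with $\max_T g = M$; hence the face $\{g = M\} \cap T$ is the convex hull of those vertices of $T$ at which $g = M$, and $y$ belongs to it. Writing $y$ as a convex combination of such vertices shows that at least one of them, say $x_j$, satisfies $x_j \cdot e \ge y \cdot e > x_i \cdot e$ while $x_j \in \mathcal N^{*}$. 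This contradicts the maximality of $x_i$, so $M \le 0$ and therefore $u_h \le w_h$ in $\Omega_h$.

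I expect the degeneracy of the smallest-eigenvalue term to be the main obstacle. Unlike a uniformly elliptic scheme, the naive approach of showing a strict inequality $\Tve[u_h;f](x_i) < \Tve[w_h;f](x_i)$ at a maximizing node fails here, because the active constraint in the minimum may be the (merely degenerate-monotone) second-difference term, for which \Cref{L:Monotonicity} delivers only a non-strict inequality. The crux is therefore to convert the resulting equality into the geometric statement that $g$ attains its maximum along the full discrete segment $\{x_i - \delta_i v^{*},\, x_i,\, x_i + \delta_i v^{*}\}$, and then to exploit this flatness—via the generic direction $e$ and the finiteness of $\mathcal N^{*}$—to march the maximum set outward until the hypothesis $u_h \le w_h$ on $\Nhb$ produces the contradiction.
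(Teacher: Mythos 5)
Your proof is correct, but it follows a genuinely different route from the paper's. The paper never has to confront the degenerate equality case at a maximum point: its Step 1 handles the strictly ordered case $\Tve[u_h;f](x_i) > \Tve[w_h;f](x_i)$ by monotonicity at a maximizing node (your opening observation), and its Step 2 reduces the general case to the strict one by perturbation, using the interpolant $q_h=\interp q$ of $q(x)=\frac12|x-x_0|^2-\frac12R^2$, which satisfies $\sdd q_h(x_i;v)\ge1$ and $q_h\le0$; then $u_\alpha:=u_h+\alpha q_h-\alpha$ obeys $\Tve[u_\alpha;f]\ge\Tve[u_h;f]+\alpha>\Tve[w_h;f]$, and letting $\alpha\to0$ concludes. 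You instead attack the non-strict case head-on: at a maximizing node the forced equality of the two operators, together with $f(x_i)-u_h(x_i)<f(x_i)-w_h(x_i)$, yields $\min_{v\in\St}\sdd u_h(x_i;v)=\min_{v\in\St}\sdd w_h(x_i;v)$, hence a flat direction $v^*$ with $\sdd (u_h-w_h)(x_i;v^*)=0$, so the maximum propagates to $x_i\pm\delta_i v^*$; a generic direction $e$ (available since $\St$ is finite) plus the affine structure of $\Vh$ on each simplex then pushes the maximum set strictly outward, contradicting extremality --- a discrete propagation-of-maxima (strong maximum principle) argument. What each buys: the paper's perturbation is shorter and more robust, requiring only monotonicity and the existence of a discretely convex function whose second differences are bounded below by one, so it transfers essentially verbatim to other monotone schemes (and $q_h$ is recycled as a barrier in \Cref{S:DBarrier} and \Cref{T:error-estimate}); your argument needs no auxiliary function, but it leans on specifics of this discretization: the symmetry $\sdd w(x_i;-v)=\sdd w(x_i;v)$ of the centered second difference, the finiteness of $\St$ (to choose $e$ with $v\cdot e\neq 0$ for all $v\in\St$), and piecewise linearity of $\Vh$ (to pull the maximum at the off-node points $x_i\pm\delta_i v^*$ back to mesh nodes). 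One small imprecision: your phrase ``both minima must be realized by the second-difference terms'' is slightly too strong, since for $u_h$ the obstacle term $f(x_i)-u_h(x_i)$ can tie with $\min_{v\in\St}\sdd u_h(x_i;v)$; however, the identity $\min_{v\in\St}\sdd u_h(x_i;v)=\min_{v\in\St}\sdd w_h(x_i;v)$ that you actually use does follow from your three inequalities, so nothing breaks.
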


\begin{proof}
The proof splits into two steps.

\textbf{Step 1.}  We first consider the case with strict inequality
\begin{equation}\label{E:strict}
\Tve[u_h;f](x_i) > \Tve[w_h;f](x_i) \quad \forall x_i \in \Nhi.
\end{equation}
We assume by contradiction that there exists an interior node $x_k \in \Nhi$ such that
$u_h - w_h$ attains a maximum at $x_k$, and $u_h(x_k) > w_h(x_k)$.
Then, by \Cref{L:Monotonicity} (monotonicity) we obtain the contradiction
\begin{equation} \label{E:Contr}
\Tve[u_h;f](x_k) \leq \Tve[w_h;f](x_k).
\end{equation}

\textbf{Step 2.}
Now we deal with \eqref{E:comparison} without the strict inequality. We introduce the auxiliary
strictly convex function $q(x) = \frac{1}{2}|x-x_0|^2- \frac{1}{2}R^2$, which satisfies
$q \leq 0$ on $\overline{\Omega}$, and in particular
$q \leq 0$ on $\partial\Omega_h$ provided
$R = \textrm{diam} (\Omega)$ and $x_0 \in \Omega$.
Its Lagrange interpolant $q_h=\interp q$ is discretely convex and satisfies
\begin{equation*}
\sdd q_h(x_i;v) \geq \sdd q(x_i;v) = \partial^2_{vv} q(x_i) = 1
\quad\forall x_i \in \Nhi,
\quad\forall v \in \St,
\end{equation*}
because $q$ is quadratic. For arbitrary $\alpha>0$,
consider the function
$u_{\alpha} = u_h + \alpha q_h - \alpha$, which satisfies
$u_{\alpha} < u_h \leq w_h$ on $\partial\Oh$ and
\begin{equation*}
\begin{aligned}
\Tve[u_{\alpha};f](x_i)
& = \min\l\{f(x_i) - u_{\alpha}(x_i), \;
\min_{v \in \St} \sdd u_{\alpha}(x_i)(x_i;v) \r\} \\
& \geq \min\l\{f(x_i) - (u_h(x_i) - \alpha), \;
\min_{v \in \St} \l(\sdd u_h(x_i;v) + \alpha \r) \r\} \\
&= \Tve[u_h;f](x_i) + \alpha > \Tve[w_h;f](x_i) \quad \forall x_i \in \Nhi.
\end{aligned}
\end{equation*}
Applying Step 1 we deduce
\begin{equation*}
u_h + \alpha q_h - \alpha \leq w_h \quad \forall \alpha>0.
\end{equation*}
Finally, let $\alpha\to0$ to obtain the asserted inequality.
\end{proof}

\subsection{Existence, Uniqueness and Stability}\label{S:Exist-Uniq}
We now prove several properties of our discrete system \eqref{E:2ScOp}
which are useful for the proof of convergence.

\begin{Lemma} [existence, uniqueness and stability]\label{L:Exist-Uniq-Stab}
There exists a unique $\uve\in\Vh$ that solves the discrete
equation \eqref{E:2ScOp}. The solution $\uve$ is stable in
the sense that $\|\uve\|_{L^{\infty}(\Oh)} \leq \|f\|_{L^{\infty}(\Omega)}$
regardless of the parameters $\ve=(h,\delta,\theta)$ of the method.
\end{Lemma}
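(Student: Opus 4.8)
The plan is to deduce uniqueness and stability directly from the discrete comparison principle \Cref{L:DCP}, and to obtain existence by Perron's method, realizing $\uve$ as the largest discrete subsolution. \emph{Uniqueness} is immediate: if $\uve$ and $\hat u$ both solve \eqref{E:2ScOp}, then $\Tve[\uve;f](x_i)=0=\Tve[\hat u;f](x_i)$ on $\Nhi$ and $\uve=f=\hat u$ on $\Nhb$, so \Cref{L:DCP} applied with the two roles of sub/supersolution interchanged gives $\uve\le\hat u$ and $\hat u\le\uve$ in $\Oh$.

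For \emph{stability} I compare $\uve$ against the constant barriers $\pm\|f\|_{L^{\infty}(\Omega)}$, both of which are affine and hence have $\sdd\equiv0$. The constant $C:=\|f\|_{L^{\infty}(\Omega)}$ satisfies $f(x_i)-C\le0$, so $\Tve[C;f](x_i)=f(x_i)-C\le0$ on $\Nhi$, while $C\ge f$ on $\Nhb$; thus \Cref{L:DCP} yields $\uve\le C$. Symmetrically, $c:=-\|f\|_{L^{\infty}(\Omega)}$ gives $\Tve[c;f]=\min\{f-c,0\}=0=\Tve[\uve;f]$ together with $c\le f$ on $\Nhb$, whence $\uve\ge c$. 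Since a continuous piecewise linear function attains its extrema at nodes, these two bounds yield $\|\uve\|_{L^{\infty}(\Oh)}\le\|f\|_{L^{\infty}(\Omega)}$.

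For \emph{existence} I introduce the set of discrete subsolutions $\mA:=\{w_h\in\Vh: \Tve[w_h;f]\ge0 \text{ on } \Nhi,\ w_h\le f \text{ on } \Nhb\}$ and define $\uve$ nodewise by $\uve(x_i):=\sup_{w_h\in\mA} w_h(x_i)$, extended by piecewise linear interpolation. The set $\mA$ is nonempty (it contains $c=-\|f\|_{L^{\infty}(\Omega)}$), bounded above by $\|f\|_{L^{\infty}(\Omega)}$ (compare each $w_h\in\mA$ with the supersolution $C$ as above), and, upon identifying $\Vh$ with $\mathbb{R}^{|\Nh|}$, closed because $\Tve$ is continuous in the nodal values; hence $\mA$ is compact. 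The structural point is that $\mA$ is closed under nodewise maximum: if $w_h^1,w_h^2\in\mA$ and $w_h:=\max(w_h^1,w_h^2)$ at the nodes, then monotonicity of the barycentric interpolation gives $w_h\ge w_h^j$ on $\Ohc$ with equality at any node $x_i$ where $w_h(x_i)=w_h^j(x_i)$, so $\sdd w_h(x_i;v)\ge\sdd w_h^j(x_i;v)\ge0$ while $f(x_i)-w_h(x_i)=f(x_i)-w_h^j(x_i)\ge0$; thus $w_h\in\mA$. Selecting, for each node, a maximizer in the compact set $\mA$ and taking the nodewise maximum of these finitely many subsolutions shows $\uve\in\mA$, i.e. $\uve$ is the maximal subsolution.

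It remains to verify that this maximal subsolution solves \eqref{E:2ScOp}, which is the crux of the argument and proceeds by a \emph{bumping} contradiction. Suppose $\Tve[\uve;f](x_k)>0$ at some $x_k\in\Nhi$, and raise only the nodal value at $x_k$ by a small $\eta>0$ to get $\tilde u\in\Vh$. Because raising one nodal value by $\eta$ changes any interpolated value by an amount in $[0,\eta]$, at $x_k$ one has $\sdd\tilde u(x_k;v)\ge\sdd\uve(x_k;v)-2\eta/\delta_k^2$ and $f(x_k)-\tilde u(x_k)=f(x_k)-\uve(x_k)-\eta$, both positive for $\eta$ small since $\Tve[\uve;f](x_k)>0$ forces both arguments of the $\min$ to be positive and $\St$ is finite; at every other interior node $x_j$ the center value is unchanged while the outer stencil values can only increase, so $\sdd\tilde u(x_j;v)\ge\sdd\uve(x_j;v)\ge0$ and the obstacle term is unchanged. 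Hence $\tilde u\in\mA$ yet $\tilde u(x_k)>\uve(x_k)$, contradicting maximality; therefore $\Tve[\uve;f]\le0$ and, with the subsolution inequality, $\Tve[\uve;f]=0$ on $\Nhi$. An identical bump at any boundary node where $\uve<f$ shows $\uve=f$ on $\Nhb$, so $\uve$ is the sought solution. The main obstacle is precisely this bumping step: one must check that an upward perturbation of a single nodal value preserves all $|\Nhi|\times|\St|$ second-difference constraints, which rests on the sign structure of \eqref{E:2Sc2Dif} and the monotonicity of piecewise linear interpolation.
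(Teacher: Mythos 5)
Your proposal is correct in substance and, like the paper's proof, is Perron-type, but it differs genuinely in its details. Uniqueness is handled identically (via \Cref{L:DCP}). For stability you compare $\uve$ against the constant barriers $\pm\|f\|_{L^{\infty}(\Omega)}$, whereas the paper compares against $u_h^-=\interp u$ and $u_h^+=\interp f$; your variant is more self-contained, since it never invokes the exact convex envelope $u$, but the paper's choice is not merely a stability device: it produces the two-sided bound $\interp u\le \uve\le \interp f$ of \eqref{E:disc-sol-bounds}, which is reused later in \Cref{L:disc-sol-boundary} and as the lower bound in the proof of \Cref{T:error-estimate}. For existence, the paper runs an explicit node-by-node sweep starting from $\interp u$: at each interior node where $\Tve>0$ it raises the nodal value until $\Tve=0$ there (possible because $\Tve$ is strictly decreasing in the center value), observes that this never decreases $\Tve$ at the other nodes, and passes to the limit of the resulting monotone bounded sequence. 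You instead realize $\uve$ as the largest discrete subsolution and derive the equation by a bumping contradiction. Both arguments run on the same engine---raising one nodal value preserves the subsolution property at every other node, by the sign structure of \eqref{E:2Sc2Dif} and the nonnegativity of the nodal basis---and your verification of this (the $-2\eta/\delta_k^2$ estimate at the bumped node, the treatment of the obstacle term, and the closedness of the subsolution set under nodewise maxima) is sound.

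One claim does need repair: your set $\mA$ of discrete subsolutions is \emph{not} compact. It is closed and bounded above by $\|f\|_{L^{\infty}(\Omega)}$, but it is unbounded below: every constant $-M$ with $M\ge\|f\|_{L^{\infty}(\Omega)}$ belongs to $\mA$, since then $\Tve[-M;f](x_i)=\min\{f(x_i)+M,\,0\}=0\ge0$ on $\Nhi$ and $-M\le f$ on $\Nhb$. Consequently the selection of nodewise maximizers cannot be justified by compactness of $\mA$ as stated. The fix is immediate with tools already in your proof: since $c=-\|f\|_{L^{\infty}(\Omega)}\in\mA$ and $\mA$ is closed under nodewise maxima, replacing any $w_h\in\mA$ by the nodewise maximum of $w_h$ and $c$ does not decrease any nodal value, so the nodewise suprema over $\mA$ and over the truncated set $\mA\cap\{w_h\ge c\}$ coincide; the truncated set is closed and bounded, hence compact, and your maximizer selection and subsequent bumping argument go through on it verbatim.
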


\begin{proof}
Since uniqueness is a trivial consequence of Lemma \ref{L:DCP}
(discrete comparison principle), we
just have to prove existence and stability.

\textbf{Step 1 - Stability:}
We first show that $u_h^- = \interp u$ is a discrete subsolution
where $u$ is the exact convex envelope and $u_h^+ = \interp f$
is a discrete supersolution, where again $\interp$ stands for the Lagrange interpolation operator.

Since $u$ is the exact convex envelope,
for any $x_i \in \Nhi$, we have $u_h^-(x_i) \leq f(x_i)$ and
$\sdd u_h^-(x_i;v) \geq 0$ because $u$ is convex. By definition \eqref{E:disc-oper}
of $\Tve$, this gives us $\Tve[u_h^-;f](x_i) \geq 0$ for all $x_i \in \Nhi$.
It is also clear that we have
$\Tve[u_h^+;f](x_i) \leq f(x_i) - u_h^+(x_i) = 0$ for all $x_i \in \Nhi$.
Therefore combining with the fact that
$u_h^+(x_i) = u_h^-(x_i) = f(x_i)$ for $x_i \in \Nhb$, we
see that $u_h^-$ and $u_h^+$ are discrete subsolution and supersolution
respectively. By Lemma \ref{L:DCP} (discrete comparison principle),
this implies
\begin{equation}\label{E:disc-sol-bounds}
u_h^- \leq \uve \leq u_h^+,
\end{equation}
and we thus obtain the stability of $\uve$ because both
$\|u_h^- \|_{L^{\infty}(\Oh)}$ and $\|u_h^+ \|_{L^{\infty}(\Oh)}$
are bounded by $\|f\|_{L^{\infty}(\Omega)}$.

\textbf{Step 2 - Discrete Perron Method:}
It remains to prove the existence of $\uve$. We proceed as in
\cite{NoNtZh1, NoZh2} and use the discrete Perron's method to construct a monotone increasing sequence of functions
$\l\{  u_h^k \r\}_{k=0}^\infty$.
The initial iterate $u_h^0$ is chosen to be $u_h^-$, and
thus satisfies the boundary condition $u_h^0(x_i) = f(x_i)$ for
all $x_i \in \Nhb$ and
\begin{equation}\label{E:discrete-sub}
\Tve[u_h^0;f](x_i) \geq 0 \quad\forall x_i\in\Nhi.
\end{equation}
We construct $\l\{  u_h^k \r\}$
by induction. Suppose that we have already built
$u_h^k\in\Vh$ satisfying both the boundary condition and
\eqref{E:discrete-sub}.
To construct $u_h^{k+1}\in\Vh$ such that $u_h^{k+1}\geq u_h^k$
and also satisfies both the boundary condition and \eqref{E:discrete-sub},
we consider all interior nodes in order and construct auxiliary
functions $u_h^{k,i-1}\in\Vh$ using the first $i-1$ nodes and starting
from $u_h^{k,0}:=u_h^k$ as follows. At $x_i\in\Nhi$ we check whether or not
$\Tve[u_h^{k,i-1};f](x_i) > 0$. If so, we
increase the value of $u_h^{k,i-1}(x_i)$ and denote the resulting
function by $u_h^{k,i}$, until
\begin{equation*}
\Tve[u_h^{k,i};f](x_i) = 0.
\end{equation*}
This is possible because $\Tve[u_h^{k,i};f](x_i)$
is strictly decreasing with respect to $u_h^{k,i}(x_i)$.
Expression \eqref{E:disc-oper} also shows that
this process does not decrease $\Tve[u_h^{k,i};f](x_j)$
for any $x_j \ne x_i$, whence
\begin{equation*}
\Tve[u_h^{k,i};f](x_j) \geq \Tve[u_h^{k,i-1};f](x_j) \geq 0
\quad\forall x_j\ne x_i.
\end{equation*}
We repeat this process with the remaining nodes $x_j$ for $i<j\leq N$
where $N$ is the number of all interior points,
and set $u_h^{k+1} := u_h^{k,N}$ to be the last intermediate
function. By construction, we clearly obtain
\begin{equation*}
\Tve[u_h^{k+1};f](x_i) \geq 0,
\quad
u_h^{k+1}(x_i) \geq u_h^k(x_i)
\quad\forall x_i\in\Nhi,
\end{equation*}
and $u_h^k(x_i) = f(x_i)$ for all $x_i \in \Nhb$.

\textbf{Step 3 - Convergence of $u_h^k$:}
By construction we have $u_h^k \geq u_h^0 = u_h^-$ and by \Cref{L:DCP} (discrete comparison principle),
$u_h^k \leq u_h^+$ and thus $u_h^k(x_i)$ is uniformly bounded.
Since the sequence $\{u_h^k\}_k$ is monotone, it must converge
to a limit
\begin{equation*}
\uve(x_i) = \lim_{k\to\infty} u_h^k(x_i) = \lim_{k\to\infty} u_h^{k,i}(x_i)
\quad\forall x_i\in\Nh.
\end{equation*}
Due to continuity of $\Tve[w_h;f]$ with respect to $w_h(x_j)$, we have
$\Tve[\uve;f](x_i) = \lim_{k\to\infty}\Tve[u_h^{k,i};f](x_i) = 0 $
for any $x_i \in \Nhi$.
This implies that the limit $\uve$ is the solution of discrete equation \eqref{E:2ScOp}
and finishes the proof.
\end{proof}

%Since $\Tve[\uve;f](x_i) = \lim_{k\to\infty}\Tve[u_h^k;f](x_i) \geq
%0$ it only remains to prove that the equality $\Tve[\uve;f](x_i) = 0$
%also holds. Assume by contradiction that $\Tve[\uve;f](x_i) \geq \alpha > 0$
%for some $\alpha > 0$ and $x_i \in \Nhi$. Due to continuity
%of $\Tve$, for $k$ large enough, we have
%$\Tve[u_h^{k,j};f](x_i) \geq \alpha/2$ for all $j$,
%which contradicts the fact $\Tve[u_h^{k,i};f](x_i) = 0.$
%for all k and $x_i \in \Nhi$. Therefore
%$\Tve[\uve;f](x_i) = 0$ holds for any $x_i \in \Nhi$ which
%implies that the limit $\uve$ is the solution of discrete equation \eqref{E:2ScOp}
%and finishes the proof.

%whence the increment
%
%\begin{equation*}
%u_h^{k+1}(x_i) - u_h^{k}(x_i)
%\geq u_h^{k,i}(x_i) - u_h^{k,i-1}(x_i)
%\end{equation*}
%
%is uniformly bounded from below by a positive constant. This
%contradicts with the stability of $u_h^k$ and hence
%$\Tve[\uve;f](x_i) = 0$ holds for any $x_i \in \Nhi$ which
%implies that the limit
%$\uve$ is the solution of discrete equation \eqref{E:2ScOp}.

Another way to prove existence and uniqueness is to
take advantage of the existing results for Bellman equation and
Howard's algorithm as we can see in section \ref{S:Exp}. \looseness=-1

We define for $x \in \Oc$
\begin{equation}\label{E:def-uo-uu}
\overline{u}(x) := \limsup_{\ve,\frac{h}{\delta} \to 0, \;y \to x} \uve(y),
\quad
\underline{u}(x) := \liminf_{\ve,\frac{h}{\delta} \to 0, \;y \to x} \uve(y),
\end{equation}
where the limits are taken for $y \in \Oh$.
From equation \eqref{E:disc-sol-bounds} and the continuity of
both $u$ and $f$, we immediately obtain the following lemma
characterizing the behavior of $\overline{u}$ and $\underline{u}$
on the boundary $\partial \Omega$.

\begin{Lemma}[boundary behavior] \label{L:disc-sol-boundary}
Let $\Omega$ be a strictly convex bounded domain, let $\uve$
be the discrete solution of \eqref{E:2ScOp}, and let
$\overline{u}(x)$ and $\underline{u}(x)$ be
defined in \eqref{E:def-uo-uu}. Then we have
$\overline{u}(x) = \underline{u}(x) = f(x)$
for all $x \in \partial \Omega$.
\end{Lemma}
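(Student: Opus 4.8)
The plan is to sandwich $\uve$ between the two Lagrange interpolants already produced in \Cref{L:Exist-Uniq-Stab} and then let the discretization parameters vanish. Recall that in the proof of that lemma the lower and upper barriers were $u_h^- = \interp u$ and $u_h^+ = \interp f$, where $u$ is the exact convex envelope, and that they satisfy $u_h^- \le \uve \le u_h^+$ in view of \eqref{E:disc-sol-bounds}; since all three functions belong to $\Vh$ and are therefore affine on each simplex, this inequality holds pointwise, i.e.
\begin{equation*}
\interp u(y) \le \uve(y) \le \interp f(y) \qquad \forall\, y \in \Oh .
\end{equation*}
The whole lemma then reduces to computing the limits of the two interpolants at a boundary point.

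First I would handle the upper bound. Fix $x \in \pO$ and split, for $y \in \Oh$, the quantity $\interp f(y) - f(x)$ into the interpolation error $\interp f(y) - f(y)$ plus the increment $f(y) - f(x)$. Because $f \in C(\Oc)$, uniform convergence of the piecewise-linear interpolant gives $\|\interp f - f\|_{L^{\infty}(\Oh)} \to 0$ as $h \to 0$, while continuity of $f$ gives $f(y) \to f(x)$ as $y \to x$. Along the process $\ve, h/\delta \to 0$, $y \to x$ (which in particular forces $h \to 0$) we thus obtain $\interp f(y) \to f(x)$, and the right half of the sandwich yields $\overline{u}(x) \le f(x)$.

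The lower bound is where strict convexity of $\Omega$ enters. Since $\Omega$ is bounded and strictly convex, the convex envelope attains its boundary data, namely $u = f$ on $\pO$, and $u \in C(\Oc)$; repeating the previous argument with $\interp u$ in place of $\interp f$ gives $\interp u(y) \to u(x) = f(x)$, so the left half of the sandwich produces $\underline{u}(x) \ge f(x)$. Combining with the trivial inequality $\underline{u}(x) \le \overline{u}(x)$ we conclude $f(x) \le \underline{u}(x) \le \overline{u}(x) \le f(x)$, that is $\overline{u}(x) = \underline{u}(x) = f(x)$ for every $x \in \pO$.

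I do not expect any serious obstacle here: as the text anticipates, the statement follows ``immediately'' from \eqref{E:disc-sol-bounds}. The only genuinely non-routine ingredient is the boundary attainment $u = f$ on $\pO$, which is not a discretization fact but a property of the continuous convex envelope on strictly convex domains recalled in the introduction; everything else is the standard uniform convergence of the interpolant of a continuous function.
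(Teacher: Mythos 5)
Your proposal is correct and follows essentially the same route as the paper: sandwich $\uve$ between $\interp u$ and $\interp f$ via \eqref{E:disc-sol-bounds}, invoke boundary attainment $u = f$ on $\pO$ from strict convexity (the paper cites Rockafellar or Guti\'errez for this), and pass to the limit using continuity of $u$ and $f$. The only difference is that you spell out the interpolation-error/continuity splitting that the paper leaves implicit.
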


\begin{proof}
Since $\Omega$ is strictly convex, the Dirichlet boundary condition $u=f$ on $\pO$ is attained as a direct consequence of \cite[Corollary 17.1.5]{Rockafellar2015convex}, or can be proved in the same way as \cite[Theorem 1.5.2]{Gutierrez}. Next use
\begin{equation*}
    \interp u(x) = u_h^-(x) \leq \uve(x) \leq u_h^+(x) = \interp f(x) \quad x \in \Oh
\end{equation*}
with equality on $\pO$ to deduce the assertion.
\end{proof}

%----------------------------------------------------------------------------------
\subsection{Consistency} \label{S:Consistency}
%----------------------------------------------------------------------------------
We now quantify the consistency error of our discrete
operator $\Tve[\interp u;f]$ for a smooth function $u$, which is enough for the proof of convergence.
%However, the convex envelope $u$ is at best of class
%$C^{1,1}(\Oc)$ irrespective of the regularity of $f$, for
%this reason, any regularity assumption of the convex envelope
%more than $C^{1,1}$ cannot lead to a practical error estimates.
In \Cref{S:RoC} we will carry out
a more delicate analysis of the consistency error which
enables us to prove error estimates for solutions with
weaker but realistic regularity. In the meantime, we stress
that the convex envelope $u$ is 
%of class $C^{0,1}(\Oc)$ if $f \in C^{0,1}(\Oc)$, and 
generically never better than of class $C^{1,1}(\Oc)$ \cite{DeFi}.

Given a node $x_i \in \Nhi$ we denote
\begin{equation}\label{E:Bi}
B_i := \cup \{T: T\in\Th, \, \dist(x_i,T) \leq \delta_i \},
\end{equation}
where $\delta_i$ is defined in \eqref{E:deltai}.
We also denote by $\Ohs$ the following $s$-interior region of $\Oh$ for any parameter $s > 0$
\begin{equation*}
 \Ohs = \left\{ x \in \Oh \; : \;
 \dist(x,\partial \Omega_h ) \geq s \right\}.
\end{equation*}
Hereafter, we use the symbols $C(d,\sigma)$, $C(d)$ and $C$ to
denote constants that depend only on
the dimension $d$ and the shape-regularity constant $\sigma$, but
are independent of the two scales
$h$ and $\delta$, the parameter $\theta$ and the function $u$.

\Cref{L:Consistency-smooth} below establishes a consistency error estimate for the
two-scale method similar to \cite[Lemma 4.1]{NoNtZh1} and \cite[Lemma 4.2]{NoNtZh1}. The 
proof follows along the lines of \cite{NoNtZh1}.

\begin{Lemma} [consistency for smooth functions] \label{L:Consistency-smooth}
Let $u\in \Cka(B_i)$ for $k=0,1$ and $\alpha \in (0,1]$,
$\interp u$ be its Lagrange interpolant,
and $B_i$ be defined in \eqref{E:Bi}.
The following estimates are then valid:
\begin{enumerate}[(i)]
\item
For all $x_i  \in \Nhi$ and all  $v \in \mS$, we have
\begin{equation}\label{E:sddIhuB}
\l|\sdd \interp u (x_i;v) \r| \leq C(d,\sigma) \; |u|_{\Wti(B_i) },
\end{equation}
\item
For all $x_i \in \Nhi \cap \Ohd$ and all $v \in \mS$,
we have
\begin{equation}\label{E:sdd-error}
\l|\sdd \interp u(x_i;v) - \sd{u}{v}(x_i) \r| \leq C(d,\sigma)
\l( |u|_{\Cka(B_i)} \delta^{k+\alpha}+ |u|_{\Wti(B_i)}\frac{h^2}{\delta^2} \r),
\end{equation}
\item
For all $x_i \in \Nhi \cap \Ohd$ and all $v \in \mS$, we have
\begin{equation}\label{E:Op-error-smooth}
\small
\bigg| \Tve[\interp u;f](x_i) - T[u;f](x_i) \bigg| \leq
C(d,\sigma) \left[ |u|_{\Cka(B_i)} \delta^{k+\alpha} +
   |u|_{\Wti(B_i)} \l( \frac{h^2}{\delta^2} + \theta^2 \r) \right].
\end{equation}
\end{enumerate}
\end{Lemma}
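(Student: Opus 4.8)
The plan is to establish the three bounds in order, since (ii) sharpens the estimate behind (i) and (iii) assembles (ii) with the discretization of the sphere $\mS$ by $\St$. For each direction $v$ I would split the discrete second difference as
\[
\sdd \interp u(x_i;v) = \sdd u(x_i;v) + \sdd(\interp u - u)(x_i;v),
\]
an exact centered difference plus an interpolation remainder. For (i), the exact term satisfies $|\sdd u(x_i;v)| \le |u|_{\Wti(B_i)}$ because it is a second-order difference quotient controlled by $\sup_{B_i}|\partial^2_{vv}u|$; the three evaluation points $x_i$ and $x_i\pm\delta_i v$ lie in $B_i$ by \eqref{E:Bi}. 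For the remainder I would invoke the standard estimate $\|u-\interp u\|_{L^\infty(T)} \le C(d,\sigma)\,h^2|u|_{\Wti(T)}$ on each element $T\subset B_i$, which contributes $C(d,\sigma)\frac{h^2}{\delta_i^2}|u|_{\Wti(B_i)}$. The point, recorded after \eqref{E:deltai}, is that $\delta_i \ge C(\sigma)h$, so $h^2/\delta_i^2 \le C(\sigma)$ and this term is absorbed into the asserted bound.

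For (ii) I keep the same splitting but restrict to $x_i\in\Nhi\cap\Ohd$, where $\delta_i=\delta$; the interpolation remainder is then exactly the second term $C(d,\sigma)\frac{h^2}{\delta^2}|u|_{\Wti(B_i)}$ in \eqref{E:sdd-error}. The first term is the truncation error $|\sdd u(x_i;v)-\partial^2_{vv}u(x_i)|$, which I would handle with Taylor's formula in integral form along $t\mapsto x_i\pm tv$, writing
\[
\delta^2\big(\sdd u(x_i;v)-\partial^2_{vv}u(x_i)\big) = \int_0^\delta (\delta-t)\big[\partial^2_{vv}u(x_i+tv)+\partial^2_{vv}u(x_i-tv)-2\partial^2_{vv}u(x_i)\big]\,dt.
\]
The symmetric bracket is $O(t^{k+\alpha})|u|_{\Cka(B_i)}$: for $k=0$ directly by Hölder continuity of $\partial^2_{vv}u$, while for $k=1$ the first-order terms cancel by symmetry and one uses the $C^{1,\alpha}$ modulus of $\partial^2_{vv}u$. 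Integrating and dividing by $\delta^2$ yields the first term $C(d,\sigma)|u|_{\Cka(B_i)}\delta^{k+\alpha}$.

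For (iii), I first note $\interp u(x_i)=u(x_i)$ at the node, so the obstacle entries $f(x_i)-\interp u(x_i)$ and $f(x_i)-u(x_i)$ coincide; since $s\mapsto\min\{s,\cdot\}$ is $1$-Lipschitz, the left-hand side of \eqref{E:Op-error-smooth} is bounded by $\big|\min_{v\in\St}\sdd\interp u(x_i;v)-\lam1[D^2u](x_i)\big|$. I would split this into a consistency part $\big|\min_{v\in\St}\sdd\interp u(x_i;v)-\min_{v\in\St}\partial^2_{vv}u(x_i)\big|$, bounded by $\max_{v\in\St}|\sdd\interp u(x_i;v)-\partial^2_{vv}u(x_i)|$ and hence by (ii), and a sphere-discretization part $0\le\min_{v\in\St}\partial^2_{vv}u(x_i)-\lam1[D^2u](x_i)$, where the lower bound follows from $\St\subset\mS$ and $\lam1[D^2u]=\min_{v\in\mS}\partial^2_{vv}u$.

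I expect the $\theta^2$ (rather than $\theta$) rate of the sphere-discretization part to be the main obstacle. A crude bound $|\partial^2_{vv}u-\partial^2_{v^\theta v^\theta}u|\le C|v-v^\theta|\,|u|_{\Wti}\le C\theta|u|_{\Wti}$ only gives first order. To recover the quadratic rate I would use that a minimizer $v_*\in\mS$ of $v\mapsto v^\top D^2u(x_i)v$ is an eigenvector of $D^2u(x_i)$ with eigenvalue $\lam1[D^2u]$, hence a critical point of the Rayleigh quotient whose first variation vanishes. Picking $v^\theta\in\St$ with $|v^\theta-v_*|\le\theta$ and writing $v^\theta=\cos\phi\,v_*+\sin\phi\,w$ with $w\perp v_*$ a unit vector, the relation $D^2u(x_i)v_*=\lam1 v_*$ annihilates the cross term, leaving
\[
\partial^2_{v^\theta v^\theta}u(x_i)=\lam1[D^2u](x_i)+\sin^2\phi\,\big(w^\top D^2u(x_i)w-\lam1[D^2u](x_i)\big).
\]
Since $\sin^2\phi\le|v^\theta-v_*|^2\le\theta^2$ and $|w^\top D^2u(x_i)w-\lam1[D^2u]|\le C|u|_{\Wti(B_i)}$, this gives $\min_{v\in\St}\partial^2_{vv}u(x_i)-\lam1[D^2u](x_i)\le C\theta^2|u|_{\Wti(B_i)}$. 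Combining the two parts produces \eqref{E:Op-error-smooth}.
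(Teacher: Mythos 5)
Your proposal is correct and follows essentially the same route as the paper: the crux of (iii) — obtaining the quadratic rate $\theta^2$ by perturbing around the eigenvector $v_*$ of $D^2u(x_i)$ so that the first-order (cross) term vanishes — is exactly the paper's argument, merely organized as a triangle-inequality split (min-consistency over $\St$ plus sphere-discretization of the exact Hessian) instead of the paper's two one-sided bounds with the algebraic identity $|v\cdot w|=\tfrac12|w|^2$. The only other difference is that you prove (i) and (ii) directly via the interpolation-error splitting and Taylor's formula with integral remainder, whereas the paper cites these from \cite[Lemma 4.1]{NoNtZh1}; your arguments are the standard ones and are sound.
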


\begin{proof}
For the proof of \eqref{E:sddIhuB} and \eqref{E:sdd-error},
the readers may refer to \cite[Lemma 4.1]{NoNtZh1}.
Here we only prove \eqref{E:Op-error-smooth}.

Recalling the definitions of $T$ in \eqref{E:pde-int-CE}
and $\Tve$ in \eqref{E:disc-oper} we only need to prove
{\small\begin{equation*}%\label{E:lam1-error}
\l| \lam1[D^2u](x_i) - \min_{v \in \St} \sdd \interp u(x_i;v) \r|
\leq C(d,\sigma) \left[ |u|_{\Cka(B_i)} \delta^{k+\alpha} +
   |u|_{\Wti(B_i)} \l( \frac{h^2}{\delta^2} + \theta^2 \r) \right].
\end{equation*}}
To this end, first let $\vt$ be the direction such that
\begin{equation*}
\sdd \interp u (x_i;\vt) = \min_{v \in \St} \sdd \interp u (x_i;v).
\end{equation*}
We use \eqref{E:lam1} and \eqref{E:sdd-error} to get
\begin{equation*}
\begin{aligned}
\lam1[D^2u](x_i) - \min_{v \in \St} \sdd \interp u(x_i;v)
\leq & \; \sd{u}{\vt}(x_i) - \sdd \interp u(x_i;\vt) \\
\leq & \; C(d,\sigma)
\l( |u|_{\Cka(B_i)} \delta^{k+\alpha}+ |u|_{\Wti(B_i)} \frac{h^2}{\delta^2} \r),
\end{aligned}
\end{equation*}
which proves one inequality of \eqref{E:Op-error-smooth}.
To show the reverse inequality we let $v$ be the direction that realizes the minimum in \eqref{E:lam1},
which means
\begin{equation*}
\partial_{vv}^2 u(x_i) = \lambda_1[D^2 u](x_i),
\end{equation*}
and we also know that $v$ is the eigenvector of $D^2 u(x_i)$ corresponding to
the smallest eigenvalue $\lam1$.
By definition of $\St$, there exists $\vt \in \St$ such that $|v - \vt| \leq \theta$,
and we can thus write
\begin{equation*}
\min_{v \in \St} \sdd \interp u(x_i;v) - \lam1[D^2u](x_i)
\leq \sdd \interp u(x_i;\vt) - \partial_{vv}^2 u(x_i)
= I_1 + I_2,
\end{equation*}
where
\begin{equation*}
I_1 = \sdd \interp u(x_i;\vt) - \partial_{\vt\vt}^2 u(x_i),
\qquad
I_2 = \partial_{\vt\vt}^2 u(x_i) - \partial_{vv}^2 u(x_i).
\end{equation*}
It is clear that $I_1$ can be bounded by \eqref{E:sdd-error}.
For $I_2$, write $\vt = v + w$, then
\begin{equation*}
\begin{aligned}
\partial_{\vt\vt}^2 u(x_i) &= \vt^T D^2u(x_i) \vt
= \partial_{vv}^2 u(x_i) + 2 w^T D^2u(x_i) v + w^T D^2u(x_i) w \\
&= \partial_{vv}^2 u(x_i) + 2 \lam1 v \cdot w + w^T D^2u(x_i) w.
\end{aligned}
\end{equation*}
Since
\begin{equation*}
1= |\vt |^2 = |v|^2 + 2 v \cdot w + |w|^2,
\end{equation*}
and $|v| = 1$, we observe that
\begin{equation*}
| v \cdot w | = \frac{1}{2} |w|^2 \leq \frac{1}{2}\theta^2,
\end{equation*}
whence we obtain
\begin{equation*}
I_2 \leq C |u|_{\Wti(B_i)} \theta^2.
\end{equation*}
Combining the bounds for both $I_1$ and $I_2$ we have
{\small
\begin{equation*}
\min_{v \in \St} \sdd \interp u(x_i;v) - \lam1[D^2u](x_i)
\leq C(d,\sigma) \left[ |u|_{\Cka(B_i)} \delta^{k+\alpha} +
|u|_{\Wti(B_i)} \l( \frac{h^2}{\delta^2} + \theta^2 \r) \right] .
\end{equation*}}
This finishes the proof of \eqref{E:Op-error-smooth}.
\end{proof}

%--------------------------------------------------------------------------------
\subsection{Convergence}\label{S:Convergence}
%--------------------------------------------------------------------------------
We are now ready to prove the convergence result.

\begin{Theorem}[convergence] \label{T:Convergence}
If $\Omega$ is a bounded and strictly convex domain and $f \in C(\Oc)$,
then the discrete solution $\uve$ of \eqref{E:2ScOp} converges uniformly
to the convex envelope $u$ of $f$ as $\ve = (h, \delta, \theta) \to 0$ and
$\frac{h}{\delta} \to 0$.
\end{Theorem}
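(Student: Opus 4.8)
The plan is to run the Barles--Souganidis paradigm for monotone schemes, but to replace the usual PDE comparison principle in the final step by the explicit characterization of $u$ as the largest convex minorant of $f$, which sidesteps a delicate uniqueness argument for the degenerate operator $T$.

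First I would record the easy consequences of stability. By \Cref{L:Exist-Uniq-Stab} the family $\{\uve\}$ is uniformly bounded, so the half-relaxed limits $\overline u$ and $\underline u$ of \eqref{E:def-uo-uu} are finite, with $\overline u$ upper semicontinuous, $\underline u$ lower semicontinuous, and $\underline u \le \overline u$ on $\Oc$. From the sandwich \eqref{E:disc-sol-bounds}, namely $\interp u \le \uve \le \interp f$, together with the uniform convergence $\interp u \to u$ and $\interp f \to f$ valid for continuous functions on quasi-uniform meshes, I obtain the two one-sided bounds $u \le \underline u$ and $\overline u \le f$ on $\Oc$.

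The core step is to show that $\overline u$ is a viscosity subsolution of $T[\cdot;f]=0$ in $\Omega$. Let $\varphi$ be smooth with $\overline u - \varphi$ attaining a strict local maximum at an interior point $x_0$, normalized so that $\overline u(x_0)=\varphi(x_0)$. By the definition of $\overline u$ there are parameters $\ve\to 0$ with $h/\delta\to 0$ and nodes $x_{i_\ve}\to x_0$ at which $\uve - \interp\varphi$ attains a local maximum over $\Nhi$, with maximum value $c_\ve \to 0$. Since $x_0$ is interior and $\delta\to 0$, eventually $x_{i_\ve}\in\Nhi\cap\Ohd$, so \Cref{L:Consistency-smooth}(iii) applies to $\varphi$. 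Because $\uve-(\interp\varphi + c_\ve)$ has a non-negative maximum at $x_{i_\ve}$ and the constant $c_\ve$ does not affect the second differences, \Cref{L:Monotonicity} yields $0=\Tve[\uve;f](x_{i_\ve}) \le \Tve[\interp\varphi+c_\ve;f](x_{i_\ve})$. Passing to the limit and using the consistency estimate \eqref{E:Op-error-smooth}, whose right-hand side tends to zero precisely because $\delta,\theta\to0$ and $h^2/\delta^2\to0$, together with $c_\ve\to0$, I conclude $T[\varphi;f](x_0)\ge 0$. This is the subsolution inequality. An entirely parallel argument with minima shows that $\underline u$ is a viscosity supersolution, although, as explained below, I will not need this for the conclusion.

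Finally I would exploit the structure of $T$. The subsolution property forces $\lam1[D^2\varphi](x_0)\ge 0$ for every test function touching $\overline u$ from above, so $\overline u$ is a viscosity subsolution of $\lam1[D^2 w]\ge 0$ and is therefore convex on the convex set $\Omega$. Combined with $\overline u\le f$ from the first step and the boundary behavior $\overline u = f$ on $\pO$ of \Cref{L:disc-sol-boundary}, the function $\overline u$ is a convex minorant of $f$ on $\Oc$; by the very definition \eqref{E:def-CE} of the convex envelope as the largest such function, $\overline u\le u$ on $\Oc$. Chaining the inequalities gives $u\le\underline u\le\overline u\le u$, hence $\overline u=\underline u=u$. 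Equality of the half-relaxed limits on the compact set $\Oc$, with common value the continuous function $u$, is equivalent to uniform convergence $\uve\to u$, which is the assertion. I expect the main obstacle to be the core step: the technical care lies in working with the piecewise linear interpolant $\interp\varphi$ rather than $\varphi$ itself and localizing the discrete maximum at a node, in guaranteeing that this node lands in the interior region $\Ohd$ where the consistency estimate \eqref{E:Op-error-smooth} is valid, and in controlling the consistency error, which forces the joint scaling $\delta,\theta\to0$ together with $h/\delta\to0$ so that $h^2/\delta^2\to0$.
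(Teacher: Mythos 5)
Your proof is correct, and it takes a genuinely different route from the paper's at the decisive step. The paper's proof is essentially a citation of the Barles--Souganidis theorem \cite{BaSoug}: monotonicity (\Cref{L:Monotonicity}, \Cref{L:DCP}), stability (\Cref{L:Exist-Uniq-Stab}), consistency (\Cref{L:Consistency-smooth}), the boundary behavior of \Cref{L:disc-sol-boundary}, plus a comparison principle for the continuous obstacle problem \eqref{E:pde-CE} imported from \cite[Proposition 2.7]{ObRu}. You keep the same discrete ingredients and the boundary lemma, and you run the standard half-relaxed-limit subsolution argument, but you never invoke a comparison principle for the degenerate operator $T$: the limit is identified instead through the characterization \eqref{E:def-CE} of $u$ as the largest convex minorant of $f$. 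The lower bound $u\le\underline u$ comes for free from the sandwich \eqref{E:disc-sol-bounds}, while the upper bound $\overline u\le u$ follows because the subsolution inequality forces $\lam1[D^2\varphi](x_0)\ge0$ for every test function $\varphi$ touching $\overline u$ from above, so that $\overline u$ is convex in $\Omega$, lies below $f$, equals $f$ on $\pO$, and hence lies below $u$. What this buys is self-containedness: only the subsolution half of the viscosity machinery is needed and no uniqueness theory for the degenerate fully nonlinear obstacle problem, at the price of exploiting structure special to the convex envelope. The paper's route is shorter on the page but outsources more to cited theory.

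Two points should be made explicit in a final write-up, though neither is a fatal gap. First, the implication that an upper semicontinuous function all of whose $C^2$ test functions from above satisfy $\lam1[D^2\varphi]\ge 0$ at the touching point must be convex on a convex domain is a genuine (if standard) theorem, due to Alvarez--Lasry--Lions and recorded in \cite{Ob1,ObSi}; it deserves a citation, not a bare assertion. Second, since \Cref{L:Monotonicity} requires control of $\uve-\interp\varphi$ at the off-node probe points $x_{i_\ve}\pm\delta_i v$, you should observe that $\uve-\interp\varphi\in\Vh$ is piecewise linear, so its maximum over a fixed ball around $x_0$ is attained at a node, and that for small $\ve$ (using $h,\delta\to0$) the points $x_{i_\ve}\pm\delta_i v$ remain inside that ball while $x_{i_\ve}\in\Nhi\cap\Ohd$, so both the monotonicity lemma and the consistency estimate \eqref{E:Op-error-smooth} apply; this is exactly the localization you flagged, and spelling it out closes the argument.
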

\begin{proof}
Our approximation scheme \eqref{E:2ScOp} satisfies monotonicity (\Cref{L:DCP}),
stability (\Cref{L:Exist-Uniq-Stab}), and consistency
(\Cref{L:Consistency-smooth}). Moreover, the PDE \eqref{E:pde-CE} for the convex envelope problem
admits a comparison principle \cite[Proposition 2.7]{ObRu} for
Dirichlet boundary conditions in the classical sense.
Similarly to \cite[Section 4]{JeSm}, \cite[Theorem 17]{FeJe} and \cite[Section 5]{NoNtZh1},
in order to use the convergence theorem of Barles and Souganidis \cite{BaSoug},
we still need the additional fact that $\overline{u}(x) = \underline{u}(x) = f(x)$
on $\pO$. Since this is proved in \Cref{L:disc-sol-boundary} (boundary behavior), \cite{BaSoug} yields uniform convergence
of the discrete solution $\uve$ to the viscosity solution $u$ of \eqref{E:pde-CE}.
\end{proof}

%%%%%%%%%%%%%%%%%%%%%%%%%%%%%%%%%%%%%%%%%%%%%%%%%%%%%%%%%%%%%%%%%%%%%%%%%%%%%%%%%%%
\section{Rates of Convergence} \label {S:RoC}
%%%%%%%%%%%%%%%%%%%%%%%%%%%%%%%%%%%%%%%%%%%%%%%%%%%%%%%%%%%%%%%%%%%%%%%%%%%%%%%%%%%
In this section, we prove convergence rates for solutions
of class $\nCkao$ for $k=0,1$ and $0<\alpha \leq 1$.
Since in general we could only expect
$u \in C^{1,1}(\Oc)$ even for smooth $f$ and $\Omega$, our estimate
of consistency error in \Cref{S:Consistency} fails.
The challenge is thus to estimate the consistency error
for solutions with less regularity.
We first show a key geometric lemma about convex envelopes which enables us to give an
estimate of the consistency error for $u \in \nCkao$. On the basis on this result, we
next prove the convergence rate using \Cref{L:DCP} (discrete comparison principle).

%----------------------------------------------------------------------------------
\subsection{Flatness}\label{S:Flatness}
%----------------------------------------------------------------------------------
%As mentioned already when explaining the PDE \eqref{E:pde-int-CE} for convex envelopes, roughly speaking, the convex envelope $u$ is flat at least in one direction for all $x \notin \mC(f)$. 
The heuristic behind the governing PDE \eqref{E:pde-int-CE} is that the convex envelope $u$ must be flat at least in one direction within the non-contact set, i.e. $\lam1[D^2 u](x) = 0$ for all $x \notin  \mC(f)$.
The question whether there is a line segment containing $x$, on which $u$ is flat, is studied in \cite[Section 3]{ObSi} for the Dirichlet convex envelope problem in which $f$ is only defined on $\pO$. For $f \in C(\Oc)$ defined in the entire $\Omega$, and corresponding definition \eqref{E:def-CE} of convex envelope $u$, we have a similar property.

\begin{Lemma}[flatness in one direction]\label{L:Flatness}
Let $f \in C(\Oc)$ and $x \in \Omega$ be such that $\dist(x,\mC(f)) \geq d \delta$.
Then for any slope $p \in \partial u(x)$,
there exists a direction $v \in \mS$ such that
\begin{equation*}
x_{\pm} = x \pm \delta v,
\quad
u(x_{\pm}) = u(x) \pm \delta (p \cdot v),
\quad
\sdd u(x;v) = 0.
\end{equation*}
Moreover, $p$ belongs also to the subdifferential sets
$\partial u(x_{\pm})$.
\end{Lemma}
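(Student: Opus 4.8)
The plan is to reduce the statement to a Carathéodory-type description of the convex envelope $u$ near $x$, and then to a short geometric computation that exploits the hypothesis $\dist(x,\mC(f))\ge d\delta$.

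First I would record the structural fact. Fix $p\in\partial u(x)$ and let $l(y)=u(x)+p\cdot(y-x)$ be the associated supporting affine function, so that $l\le u\le f$ on $\Oc$ and $l(x)=u(x)$. Consider the contact set of $l$ with $u$,
\[
Z:=\{y\in\Oc: u(y)=l(y)\},
\]
which is compact (by continuity) and convex (because $u-l$ is convex and nonnegative, so its zero set is convex). The key claim is that every extreme point of $Z$ belongs to $\mC(f)$: if $z$ were an extreme point with $u(z)<f(z)$, then $z$ is interior (the boundary lies in $\mC(f)$ since $u=f$ on $\pO$), and the strict gap $f-u>0$ near $z$ together with the maximality of $u$ among the affine minorants in \eqref{E:def-CE} would let us tilt $l$ slightly upward in a neighborhood of $z$ while remaining $\le f$, contradicting the definition of $u$. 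This is precisely the flatness mechanism of \cite{CaNiSp,ObSi}, adapted from the Dirichlet setting to $f\in C(\Oc)$. Granting the claim, Minkowski's theorem together with Carathéodory's theorem expresses $x\in Z$ as a convex combination $x=\sum_{j=0}^m\lambda_j y_j$ of at most $m+1\le d+1$ extreme points $y_j\in\mC(f)$ with $\lambda_j>0$; and since $u(y_j)=l(y_j)$ and $u$ is convex, $u=l$ on all of $\conv\{y_0,\dots,y_m\}$.

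Next I would extract the direction $v$. Because $\dist(x,\mC(f))\ge d\delta>0$ we have $x\neq y_j$ and $r_j:=|y_j-x|\ge d\delta$ for every $j$. Choose the index $j^\ast$ of largest weight, so $\lambda_{j^\ast}\ge\frac1{m+1}\ge\frac1{d+1}$, and set $v:=(y_{j^\ast}-x)/r_{j^\ast}\in\mS$, $r:=r_{j^\ast}$. The forward point $x+\delta v=(1-\tfrac{\delta}{r})x+\tfrac{\delta}{r}y_{j^\ast}$ is a convex combination of $x$ and $y_{j^\ast}$ since $\delta\le d\delta\le r$, hence lies in $\conv\{y_0,\dots,y_m\}$. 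For the backward point, substituting $x=\sum_j\lambda_j y_j$ gives $x-\delta v=\sum_j\mu_j y_j$ with $\mu_j=(1+\tfrac{\delta}{r})\lambda_j$ for $j\neq j^\ast$ and $\mu_{j^\ast}=(1+\tfrac{\delta}{r})\lambda_{j^\ast}-\tfrac{\delta}{r}$; one checks $\sum_j\mu_j=1$, and $\mu_{j^\ast}\ge0$ precisely because $\lambda_{j^\ast}\ge\frac1{d+1}\ge\frac{\delta}{r+\delta}$, the last inequality following from $r\ge d\delta$. Thus $x-\delta v$ also lies in $\conv\{y_0,\dots,y_m\}$. This is the only place the dimensional factor $d$ in the hypothesis is used, and it matches the Carathéodory bound $d+1$ exactly.

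Finally I would conclude. Since $u=l$ on $\conv\{y_0,\dots,y_m\}$ and both $x_\pm=x\pm\delta v$ lie there, $u(x_\pm)=l(x_\pm)=u(x)\pm\delta(p\cdot v)$, which also yields $\sdd u(x;v)=\delta^{-2}\big(l(x+\delta v)-2l(x)+l(x-\delta v)\big)=0$ because $l$ is affine. Moreover $l\le u$ on $\Oc$ with equality at $x_\pm$ shows that $l$ supports $u$ at $x_\pm$, i.e. $p=\nabla l\in\partial u(x_\pm)$. The main obstacle is the structural claim that the extreme points of $Z$ are contact points: the geometric computation is elementary, but the tilting argument establishing the contact-point representation — in particular handling boundary points via $\pO\subset\mC(f)$ and quantifying the gap $f-u$ away from $Z$ — is where the work lies, and it is exactly what we would adapt from \cite{CaNiSp,ObSi}.
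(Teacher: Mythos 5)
Your second and third paragraphs --- the extraction of $v$ and the conclusion --- are correct and in substance identical to the paper's own proof of \Cref{L:Flatness}: the paper likewise takes the vertex of maximal weight $\lambda_j \ge 1/(d+1)$, uses $|x_j - x| \ge d\delta$ to keep both points $x \pm \delta v$ inside the hull (phrased via the auxiliary point $x_0 = \sum_{i\neq j}\frac{\lambda_i}{1-\lambda_j}x_i$ rather than your explicit coefficients $\mu_j$, which is only a cosmetic difference), and reads off the three identities and $p \in \partial u(x_\pm)$ from affineness of $u$ on the hull.

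The genuine gap is in your structural claim, precisely where you say the work lies. The claim (extreme points of $Z$ lie in $\mC(f)$) is true, but the sketch offered for it would not survive being written out: ``tilting $l$ slightly upward in a neighborhood of $z$'' is not an available operation, since an affine perturbation $l + \alpha L$ is global --- to keep it $\le f$ you must control $f - l$ on the whole set $\Oc \cap \{L \ge 0\}$, not near $z$. Knowing $f - u > 0$ near $z$ (your ``strict gap'') is therefore insufficient; what is actually needed is that $z$ be strictly separated from the global contact set $S := \{ y \in \Oc : f(y) = l(y)\}$ of the supporting plane with the obstacle, i.e. $z \notin \conv(S)$, after which compactness of $\Oc \cap \{L \ge 0\}$ and positivity of $f - l$ there yield a uniform gap and the contradiction with \eqref{E:def-CE}. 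For an extreme point $z$ of $Z$ with $u(z) < f(z)$ this separation does hold, but via a step you omit: since $S \subset Z$, if $z \in \conv(S)$ then a Carath\'eodory representation of $z$ by points of $S \subset Z$ together with extremality of $z$ in $Z$ forces $z \in S$, a contradiction (one cannot simply take a hyperplane supporting $Z$ only at $z$, because extreme points need not be exposed). Your appeal to $\pO \subset \mC(f)$, which needs strict convexity of $\Omega$, is tied to the flawed local argument and becomes unnecessary once the global separation is used. Finally, note that the repaired mechanism --- separation plus tilting --- applies just as well at $x$ itself: either $x \in \conv(S)$, or one tilts and contradicts \eqref{E:def-CE}. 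That is exactly the paper's \Cref{L:noncontact-set} (structure of non-contact set), and it makes the Minkowski/extreme-point detour, and the extra Milman-type step it requires, superfluous; your route is viable once patched, but it deploys strictly more machinery to arrive at the same separation argument.
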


This lemma says that if $x$ is away from the contact set $\mC(f)$
at least at distance $d \delta$, then there exists a line segment centered at $x_i$ with length at least $2\delta$ such that the convex envelope $u$ is flat on this segment. The flattness means
the second difference of $u$ in this direction is $0$, which plays
an important role in obtaining consistency error for $x$
far away from $\mC(f)$. To prove \Cref{L:Flatness},
we need the following definition and subsequent result:
given $x \in \Omega \setminus \mC(f)$ and $p \in \partial u(x)$, let
\begin{equation*}
    \mC(f;x,p) := \l\{ y \in \Oc: f(y) = u(x) + p \cdot (y-x) \r\},
\end{equation*}
and note that $\mC(f;x,p) \subset \mC(f)$ because $u$ is convex and
$u(y) \ge u(x) + p \cdot (y-x)$ whence $u(y)=f(y)$. The following auxiliary result
is exactly the same as \cite[Lemma 3.3]{DeFi}
and similar to \cite[Lemma 2]{CaNiSp} and \cite[Theorem 3.2]{ObSi}.
We still give a proof here for completeness.

\begin{Lemma}[structure of non-contact set]\label{L:noncontact-set}
Let $f \in C(\Oc)$ and $x \in \Omega \setminus \mC(f)$.
Then for any slope $p \in \partial u(x)$, there exist points
$x_1,\ldots,x_k \in \mC(f)$ with $2 \leq k \leq d+1$ such that
\begin{equation*}
x \in \conv \;(x_1,\ldots,x_k),
\end{equation*}
and $u$ is affine in the convex hull $\conv \;(x_1,\ldots,x_k)$ of $(x_i)_{i=1}^k$. Moreover, $p$ is also in the subdifferential set
$\partial u(y)$ for any $y \in \conv \;(x_1,\ldots,x_k)$.
\end{Lemma}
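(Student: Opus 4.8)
The plan is to exploit the maximality of the convex envelope $u$ as the supremum of affine minorants of $f$ in \eqref{E:def-CE}. First I would fix $p \in \partial u(x)$ and introduce the supporting affine function $\ell(y) := u(x) + p \cdot (y-x)$. Since $p$ is a subgradient of the convex function $u$ at $x$, we have $\ell \le u$ on $\Oc$, and combined with $u \le f$ this gives $\ell \le f$ on $\Oc$, while $\ell(x) = u(x)$. The set $\mC(f;x,p) = \{y \in \Oc : f(y) = \ell(y)\}$ is exactly the zero set of the nonnegative continuous function $f - \ell$, hence compact, and it is contained in $\mC(f)$ as already observed. The whole lemma then reduces to the single geometric claim
\[
x \in \conv\big(\mC(f;x,p)\big),
\]
after which Carath\'eodory's theorem in $\mRd$ produces points $x_1,\dots,x_k \in \mC(f)$ with $2 \le k \le d+1$ and $x \in \conv(x_1,\dots,x_k)$; here the lower bound $k \ge 2$ holds because $x \notin \mC(f)$ forces $x$ to differ from any single contact point.

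The heart of the argument, and the main obstacle, is establishing this inclusion by contradiction. If $x \notin \conv(\mC(f;x,p))$, then since $\conv(\mC(f;x,p))$ is compact and convex, the separating hyperplane theorem yields a unit vector $e$ and $\eta > 0$ with $e \cdot (y - x) \le -\eta$ for all $y \in \conv(\mC(f;x,p))$. I would then perturb $\ell$ into the affine function $\ell_{t,s}(y) := \ell(y) + t\, e \cdot (y-x) + s$ and show that for suitable small $t,s > 0$ it still satisfies $\ell_{t,s} \le f$ on $\Oc$ while $\ell_{t,s}(x) = u(x) + s > u(x)$, which contradicts the representation of $u(x)$ as a supremum of affine functions below $f$. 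The delicate point is verifying $\ell_{t,s} \le f$ uniformly: on a $\rho$-neighborhood of $\mC(f;x,p)$ the separation bound gives $e \cdot (y-x) \le -\eta/2$ once $\rho \le \eta/2$, so choosing $s \le t\eta/2$ keeps $\ell_{t,s} \le \ell \le f$ exactly where $f - \ell$ degenerates to zero; off this neighborhood, compactness and continuity give $f - \ell \ge c_\rho > 0$, and shrinking $t$ keeps the perturbation $t\,e\cdot(y-x)+s$ below $c_\rho$. Balancing these two regimes fixes the admissible $t$ and $s$.

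Finally I would upgrade the inclusion $x \in \conv(x_1,\dots,x_k)$ to the affineness of $u$ on the simplex and to the subdifferential statement. At each vertex $x_i \in \mC(f;x,p) \subset \mC(f)$ we have $u(x_i) = f(x_i) = \ell(x_i)$, so for any $y = \sum_i \lambda_i x_i$ in the hull convexity gives $u(y) \le \sum_i \lambda_i u(x_i) = \sum_i \lambda_i \ell(x_i) = \ell(y)$, while $\ell \le u$ holds globally; hence $u = \ell$ on $\conv(x_1,\dots,x_k)$ and $u$ is affine there. Moreover, for any $y$ in the hull and any $z \in \Oc$, the global bound $u(z) \ge \ell(z) = \ell(y) + p \cdot (z-y) = u(y) + p \cdot (z-y)$ shows $p \in \partial u(y)$, which completes the proof.
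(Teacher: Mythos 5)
Your proof is correct and follows essentially the same route as the paper's: introduce the supporting plane $\ell$, prove $x \in \conv\big(\mC(f;x,p)\big)$ by contradiction via hyperplane separation and a small affine perturbation of $\ell$ that stays below $f$ yet exceeds $u(x)$ (contradicting the supremum definition \eqref{E:def-CE}), then apply Carath\'eodory and verify affineness and the subdifferential claim on the hull. The only differences are cosmetic: the paper controls its perturbation $P+\alpha L$ on the half-space $\{L\ge 0\}$ rather than on a $\rho$-neighborhood of the contact set, and it routes the affineness claim through the auxiliary convex set $\mK=\{u=P\}$, whereas you verify $u=\ell$ on the hull directly from the values at the vertices.
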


\begin{proof}
For any $p \in \partial u(x)$, define
$P(y) := u(x) + p \cdot (y-x)$ and observe that
\begin{equation*}
\mC := \mC(f;x,p) = \l\{ y \in \Oc: f(y) = P(y) \r\}.
\end{equation*}
We claim that $x \in \conv(\mC)$. Argue by contradiction,
suppose $x \notin \conv(\mC)$, and use the hyperplane separation theorem
to find an affine function $L$ such that
$L(x) > 0$ and $L(y) < 0$ for every $y \in \mC$. By the definition
of $\mC$ and the fact that $P \leq u \leq f$, it is clear that
$f - P$ is strictly positive in the compact set
$\Oc \cap \{ L \geq 0\}$: in fact, if $f(y) \le P(y)$ then $f(y) = P(y) = u(y)$
and $y \in \mC$, whence $L[y] < 0$. Therefore it is easy to see that
for some small $\alpha > 0$, we have
\begin{equation*}
\wt{L}(y) := P(y) + \alpha L(y) \leq f(y) \quad \forall y \in \Oc,
\end{equation*}
but $\wt{L}(x) > P(x) = u(x)$. This contradicts the definition
of convex envelope $u$ and thus proves the claim $x \in \conv(\mC)$.
Now we use Carath\'{e}odory's theorem to obtain the existence of
$x_1,\ldots,x_k \in \mC$ with $k \leq d+1$ such that
$x \in \conv(x_1,\ldots,x_k)$.

To prove that $p \in \partial u(y)$ for any $y \in \conv(x_1,\ldots,x_k)$,
we define
\begin{equation*}
\mK := \l\{ y \in \Oc:
u(y) = u(x) + p \cdot (y - x)
\r\} = \l\{ y \in \Oc: u(y) = P(y)
\r\},
\end{equation*}
whence $u$ is affine in $\mK$. We claim that $\mK$ is convex.
Let $y_1,y_2 \in \mK, \lambda \in (0,1)$
and $z = \lambda y_1 + (1- \lambda) y_2$. Since $u$ is convex,
we have
\begin{equation*}
u(z) \leq \lambda u(y_1) + (1- \lambda) u(y_2)
= \lambda P(y_1) + (1- \lambda) P(y_2) = P(z).
\end{equation*}
On the other hand, since $p \in \partial u(x)$,
the supporting plane $P$ must be below $u$,
and in particular
\begin{equation*}
u(z) \geq P(z).
\end{equation*}
Therefore $u(z) = P(z)$, and thus $z \in \mK$, which implies
the convexity of $\mK$. Since $P \leq u \leq f$,
we have $\{x_1, \ldots, x_k \} \subset \mC \subset \mK$
and $\conv \;(x_1, \ldots, x_k) \subset \mK$.
It is clear that for any $y \in \mK$, we have
$u(y) = u(x) + p \cdot (y - x)$ and
\begin{equation*}
P(z) = u(x) + p \cdot (z - x)
= u(y) + p \cdot (z - y) \leq u(z) \quad \forall z \in \Oc.
\end{equation*}
By definition of $\partial u(y)$ this implies $p \in \partial u(y)$
for any $y \in \conv(x_1,\ldots,x_k)$. In addition, $u$ is
affine in $\conv \;(x_1, \ldots, x_k)$.
\end{proof}

\begin{proof}[Proof of \Cref{L:Flatness}]
For any $p \in \partial u(x)$, by \Cref{L:noncontact-set} (structure of non-contact set), there exist $k$ ($2\le k \le d+1$) points $x_i \in \mC(f;x,p)$ such that
\begin{equation*}
x = \sum_{i=1}^{k} \lambda_i x_i, \quad
\lambda_i \geq 0, \quad
\sum_{i=1}^{k} \lambda_i = 1,
\end{equation*}
and $p$ belongs to the subdifferential set $\partial u(y)$
for any $y \in \conv(x_1,\ldots,x_k)$. If $j$ is such that
$\lambda_j = \max_{1 \leq i \leq k} \lambda_i$, then we have
\begin{equation*}
\lambda_j \geq \frac{1}{k} \sum_{i=1}^{k} \lambda_i
= \frac{1}{k} \geq \frac{1}{d+1}.
\end{equation*}
Now let $x_0
%= \frac{1}{1-\lambda_j} \sum_{i \neq j} \lambda_i x_i
= \sum_{i \neq j} \frac{\lambda_i}{1-\lambda_j} x_i \in
\conv \;(x_1, \ldots, x_k)$ to get
\begin{equation*}
x = \sum_{i=1}^{k} \lambda_i x_i =
\lambda_j x_j + \sum_{i \neq j} \lambda_i x_i =
\lambda_j x_j + (1-\lambda_j) x_0.
\end{equation*}
Since both $x_0,x_j \in \conv \;(x_1,\ldots,x_k)$, the segment
$\overline{x_0x_j}$ is also in $\conv \;(x_1,\ldots,x_k)$.
Due to the fact $\dist(x,\mC(f)) \geq d \delta$,
we have $|x_j - x| \geq d \delta$, and
\begin{equation*}
|x_0 - x| = \frac{\lambda_j}{1 - \lambda_j} |x_j - x|
\geq \frac{1/(d+1)}{1-1/(d+1)} \; d \delta = \delta.
\end{equation*}
Therefore, if $v = \frac{x_j - x}{|x_j - x|}$ and
$x_{\pm} = x \pm \delta v$, clearly $x_{\pm}$
lie in the segment $\overline{x_0x_j}$, and thus
also inside $\conv(x_1,\ldots,x_k)$.
Finally, \Cref{L:noncontact-set} (structure of non-contact set) shows $p \in \partial u(x_{\pm})$ and
$u(x_{\pm}) = u(x) \pm \delta (p \cdot v)$,
which immediately leads to $\sdd u(x;v) = 0$.
\end{proof}

%----------------------------------------------------------------------------------
\subsection{Consistency for Solutions with H\"older Regularity}\label{S:Consistency-lowreg}
%----------------------------------------------------------------------------------
In this section, we take advantage of results in \Cref{S:Flatness} to
derive a consistency error for solutions with realistic H\"older
regularity $u \in \nCkao$ for $k=0,1$
and $0<\alpha \leq 1$, which improves upon the consistency error
estimates in \Cref{S:Consistency}.

The Lagrange interpolant $\interp u \in \Vh$ of $u$ satisfies for all interior nodes $x_i \in \Nhi$
\begin{equation*}
\interp u(x_i) = u(x_i) \leq f(x_i), \quad
\sdd \interp u(x_i;v) \geq \sdd u(x_i;v) \geq 0 \quad \forall v \in \mS
\end{equation*}
because of the convexity of $u$. In view of definition \eqref{E:disc-oper}
of $\Tve$, this in turn implies $\Tve[\interp u;f](x_i) \geq 0$ for all $x_i \in \Nhi$.
The following proposition yields upper bounds for $\Tve[\interp u;f](x_i)$ depending on the location of
$x_i$ relative to $\mC(f)$ and $\pO$.
%
%Let $u$ be the exact solution of our convex envelope problem \eqref{E:pde-CE},
%$\interp u = \interp u$ be its interpolation. Then at any interior node
%$x_i \in \Nhi$, we have $\interp u(x_i) = u(x_i) \leq f(x_i)$ and
%\begin{equation*}
%\sdd \interp u(x_i;v) \geq \sdd u(x_i;v) \geq 0,
%\end{equation*}
%for any $v \in \mS$. By definition of $\Tve$,
%we see that $\Tve[\interp u;f](x_i) \leq 0$ and so obtain an
%upper bound of $\Tve[\interp u;f](x_i)$.
%
\begin{Proposition}[consistency for $u$ with H\"older regularity]\label{Prop:Consistency-lowreg}
Let $\Omega$ be a bounded strictly convex domain,
$u \in \nCkao$ for $k=0,1$ and $0<\alpha \leq 1$
be the exact solution of the convex envelope problem \eqref{E:pde-CE}.
In addition, let $B_i$ be defined in \eqref{E:Bi} and set
\begin{equation}\label{E:wtBi}
\wt{B_i} := \{x \in \Oc: |x - x_i| \leq d\delta \}.
\end{equation}
For $x_i \in \Nhi$,
the following estimates are then valid:

\begin{enumerate}[(i)]
\item If $\dist(x_i,\mC(f)) \geq d \delta$, we have
\begin{equation}\label{E:error-noncontact}
\min_{\vt \in \St} \sdd \interp u(x_i;\vt) \leq C(d,\sigma)
\frac{(\delta \theta)^{k+\alpha} + h^{k+\alpha}}{\delta^2}
|u|_{\nCka(B_i)}.
\end{equation}
\item If $\dist(x_i,\mC(f)) < d \delta, \; \dist(x_i,\pO) \geq d\delta$,
and $f \in \nCkao$, then for $k = 0$ we have
\begin{equation}\label{E:error-contact-0}
f(x_i) - u(x_i) \leq C(d,\sigma) \delta^{\alpha}
\l( |u|_{\Czeroa(\wt{B_i})} + |f|_{\Czeroa(\wt{B_i})} \r),
\end{equation}
whereas for $k = 1$ we have
\begin{equation}\label{E:error-contact-1}
f(x_i) - u(x_i) \leq C(d,\sigma) \delta^{1+\alpha}
|f|_{\Conea(\wt{B_i})}.
\end{equation}
\item If $0 < \dist(x_i,\pO) < d\delta$, then for all $v \in \mS$,
we have
\begin{equation}\label{E:error-boundary}
\sdd \interp u(x_i;v) \leq C(d,\sigma) \delta_i^{k+\alpha-2} |u|_{\nCka(B_i)},
\end{equation}
and \eqref{E:error-contact-0} also holds provided $k=0$.
\end{enumerate}
\end{Proposition}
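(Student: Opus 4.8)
The plan is to read \eqref{E:error-noncontact}--\eqref{E:error-boundary} as \emph{one-sided} upper bounds on the discrete operator. Since convexity already gives $\Tve[\interp u;f](x_i)\ge0$ (as established just before the statement) and $T[u;f]\equiv0$, the consistency error \emph{is} $\Tve[\interp u;f](x_i)$ and only upper bounds are needed. Because $\Tve$ is a minimum of two terms, in each regime it suffices to control one of them: in (i) the discrete Hessian $\min_{v^\theta\in\St}\sdd\interp u(x_i;v^\theta)$, in (ii) the gap $f(x_i)-u(x_i)$, and in (iii) every $\sdd\interp u(x_i;v)$ (hence again the minimum). I would first record the elementary but crucial fact that $\pO\subset\mC(f)$, since $u=f$ on $\pO$; consequently $\dist(x_i,\mC(f))\le\dist(x_i,\pO)$, so the hypothesis of (i) forces $\dist(x_i,\pO)\ge d\delta\ge\delta$ and therefore $\delta_i=\delta$ throughout case (i).

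Case (i) is the crux and where I expect the main difficulty. First I would invoke \Cref{L:Flatness} to produce a unit direction $v$ and a slope $p\in\partial u(x_i)$ with $u$ affine on $[x_i-\delta v,x_i+\delta v]$, so that $u(x_i\pm\delta v)=u(x_i)\pm\delta(p\cdot v)$ and $p\in\partial u(x_i\pm\delta v)$. I then choose $v^\theta\in\St$ with $|v-v^\theta|\le\theta$, set $w:=v^\theta-v$ (so $|w|\le\theta$), and observe that $x_i\pm\delta v^\theta=(x_i\pm\delta v)\pm\delta w$. The key computation is to bound $\sdd u(x_i;v^\theta)$ from above by expanding $u$ about the flat endpoints $x_\pm=x_i\pm\delta v$: the linear contributions $\pm\delta(p\cdot w)$ cancel, and the $\nCka$ regularity leaves only a Hölder remainder of size $(\delta\theta)^{k+\alpha}$, giving $\sdd u(x_i;v^\theta)\le C(\delta\theta)^{k+\alpha}\delta^{-2}|u|_{\nCka(B_i)}$. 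For $k=0$ this uses only $|u(x_\pm+\delta w)-u(x_\pm)|\le|u|_{\Czeroa(B_i)}(\delta\theta)^\alpha$, while for $k=1$ it uses $p=\nabla u(x_\pm)$ and a $C^{1,\alpha}$ Taylor expansion. Replacing $u$ by $\interp u$ costs the interpolation error $Ch^{k+\alpha}\delta^{-2}|u|_{\nCka(B_i)}$ at the three nodal evaluations, and since $\min_{v^\theta\in\St}\sdd\interp u(x_i;v^\theta)\le\sdd\interp u(x_i;v^\theta)$ this yields \eqref{E:error-noncontact}.

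For case (ii) I would exploit a nearby contact point: since $\dist(x_i,\mC(f))<d\delta$, choose $y\in\mC(f)$ with $|x_i-y|<d\delta$, so $y\in\wt{B_i}$ and $u(y)=f(y)$. For $k=0$, splitting $f(x_i)-u(x_i)=[f(x_i)-f(y)]+[u(y)-u(x_i)]$ and applying the $\Czeroa$ seminorms of $f$ and $u$ on $\wt{B_i}$ gives \eqref{E:error-contact-0}. For $k=1$ I would additionally use that, since $\dist(x_i,\pO)\ge d\delta$, this $y$ is interior and $f-u\ge0$ attains an interior minimum (value $0$) at $y$, whence $\nabla u(y)=\nabla f(y)$; convexity then gives the lower bound $u(x_i)\ge u(y)+\nabla u(y)\cdot(x_i-y)=f(y)+\nabla f(y)\cdot(x_i-y)$, a $C^{1,\alpha}$ expansion of $f$ at $y$ gives the matching upper bound, the linear parts cancel, and only the remainder $C(d\delta)^{1+\alpha}|f|_{\Conea(\wt{B_i})}$ survives, which is \eqref{E:error-contact-1}; note this bound sees only $|f|$.

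Finally, case (iii) is a pure interpolation estimate at the reduced scale $\delta_i$. Using $\interp u(x_i)=u(x_i)$ together with the nodal bound $|\interp u-u|\le Ch^{k+\alpha}|u|_{\nCka(B_i)}$ at $x_i\pm\delta_i v$, I would reduce to $\sdd\interp u(x_i;v)\le\sdd u(x_i;v)+Ch^{k+\alpha}\delta_i^{-2}|u|_{\nCka(B_i)}$, and then bound $\sdd u(x_i;v)\le C\delta_i^{k+\alpha-2}|u|_{\nCka(B_i)}$ directly from the $\nCka$ regularity (the $\Czeroa$ Hölder bound for $k=0$, an integral of increments of $\nabla u$ for $k=1$). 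Since $\delta_i\ge C(\sigma)h$, the interpolation term is absorbed and \eqref{E:error-boundary} follows. The closing claim that \eqref{E:error-contact-0} also holds when $k=0$ is immediate because $\pO\subset\mC(f)$ forces $\dist(x_i,\mC(f))<d\delta$ here, so the case (ii) argument applies verbatim. The main obstacle throughout is the cancellation mechanism in case (i): converting the exact flatness $\sdd u(x_i;v)=0$ into a sharp $(\delta\theta)^{k+\alpha}$ bound for the perturbed discrete direction $v^\theta$, which is precisely what allows low regularity to coexist with a usable consistency rate.
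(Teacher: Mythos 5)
Your proposal is correct and follows essentially the same route as the paper's proof: the same case decomposition, the flatness lemma plus the $v^{\theta}$-perturbation with cancellation of the linear terms $\pm\delta(p\cdot w)$ in case (i), the nearby contact point with Hölder splitting ($k=0$) and gradient matching $\nabla u(y)=\nabla f(y)$ plus supporting hyperplane and $C^{1,\alpha}$ Taylor expansion ($k=1$) in case (ii), and the direct regularity-plus-interpolation bound at scale $\delta_i$ (absorbing $h^{k+\alpha}/\delta_i^2$ via $\delta_i\ge C(\sigma)h$) in case (iii). The only cosmetic differences are that you phrase the gradient matching as a first-order condition at an interior minimum of $f-u$ rather than via $f\ge u\ge P$, and you make explicit the observation $\pO\subset\mC(f)$ needed for the final claim of (iii), which the paper leaves implicit.
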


\begin{proof}
Since $\Omega$ is strictly convex, we have $\pO \subset \mC(f)$. This implies
that $x_i \in \Nhi$ must fall within one of the following three mutually exclusive cases.

%To get a lower bound for $\Tve[\interp u;f](x_i)$, we now distinguish
%three cases based on the position of $x_i \in \Nhi$. Notice that
%since $\Omega$ is strictly convex, we have $\pO \subset \mC(f)$,
%so $x_i$ must fall exactly into one of the following cases.

\textbf{Case 1: } $\dist(x_i,\mC(f)) \geq d \delta$. By \Cref{L:Flatness} (flatness in one direction),
for any $p \in \partial u(x_i)$, there exists
$v \in \mS$ such that
\begin{equation*}
x_{\pm} = x_i \pm \delta v,
\quad
u(x_{\pm}) = u(x_i) \pm \delta (p \cdot v),
\quad
\sdd u(x_i;v) = 0.
\end{equation*}
By the definition of $\St$, there exists $\vt \in \St$ such that $|v - \vt| \leq \theta$.
We claim that
\begin{equation*}\sdd \interp u(x_i;\vt) \leq C(d,\sigma)
\frac{(\delta \theta)^{k+\alpha} + h^{k+\alpha}}{\delta^2}
|u|_{\nCka(B_i)},
\end{equation*}
which implies \eqref{E:error-noncontact}. Using
$\; \dist(x_i,\mC(f)) \geq d \delta$, we have $\delta_i = \delta$
in definition \eqref{E:2Sc2Dif}.
Let $x^{\theta}_{\pm} = x_i \pm \delta \vt$, then
$x^{\theta}_{\pm} \in B_i$ and $|x^{\theta}_{\pm} - x_{\pm} | \leq \delta \theta$.
%\begin{equation*}
%|x^{\theta}_{+} - x_+ | \leq \theta \delta, \quad
%|x^{\theta}_{-} - x_- | \leq \theta \delta.
%\end{equation*}
Since the interpolation error satisfies
\begin{equation}\label{E:interp-error}
|u - \interp u|_{L^{\infty}(B_i)} \leq C(d,\sigma) h^{k+\alpha}|u|_{\nCka(B_i)},
\end{equation}
we infer that
\begin{equation}\label{E:sdd-u-uh}
\l| \sdd \interp u(x_i;\vt) - \sdd u(x_i;\vt) \r| \leq
C(d,\sigma) \frac{h^{k+\alpha}}{\delta^2} |u|_{\nCka(B_i)},
\end{equation}
whence it remains to prove
\begin{equation*}
\sdd u(x_i;\vt) \leq C(d,\sigma) \frac{(\delta \theta)^{k+\alpha}}{\delta^2}
|u|_{\nCka(B_i)}.
\end{equation*}

For $k = 0$,
by definition of $|u|_{\nCka(B_i)}$ seminorm, we see that
\begin{align*}
|u(x_{\pm}) - u(x^{\theta}_{\pm})| \leq |x^{\theta}_{\pm} - x_{\pm} |^{\alpha} \; |u|_{\nCka(B_i)}
\leq (\delta \theta)^{\alpha} \; |u|_{\nCka(B_i)}.
%|u(x_+) - u(x^{\theta}_+)| \leq |x^{\theta}_{+} - x_+ |^{\alpha} \; |u|_{\nCka(B_i)}
%\leq (\theta \delta)^{\alpha} \; |u|_{\nCka(B_i)}, \\
%|u(x_+) - u(x^{\theta}_+)| \leq |x^{\theta}_{+} - x_+ |^{\alpha} \; |u|_{\nCka(B_i)}
%\leq (\theta \delta)^{\alpha} \; |u|_{\nCka(B_i)}.
\end{align*}
Using this inequality, along with $\sdd u(x_i;v) = 0$, yields the desired bound
\begin{equation*}
\begin{aligned}
\sdd u(x_i;\vt) & \leq \sdd u(x_i;v) +
\frac{|u(x_+) - u(x^{\theta}_+)| + |u(x_+) - u(x^{\theta}_+)|}{\delta^2} \\
& \leq \frac{2(\delta \theta)^{k+\alpha}}{\delta^2}
|u|_{\nCka(B_i)}.
\end{aligned}
\end{equation*}

For $k=1$, we know $p = \nabla u(x_i) = \nabla u(x_{\pm})$.
If $w = \vt - v$, we then have
\begin{equation*}
\begin{aligned}
u(x^{\theta}_{\pm}) &= u(x_{\pm}) \pm \int_0^1 \delta \;
\nabla u\l(x_{\pm} \pm t\delta w \r) \cdot w \; dt \\
&= u(x_{\pm}) \pm \delta \nabla u(x_{\pm}) \cdot w
\pm \int_0^1 \delta \;
\l[ \nabla u\l(x_{\pm} \pm t\delta w \r) - \nabla u(x_{\pm}) \r]
\cdot w \; dt,
\end{aligned}
\end{equation*}
whence
\begin{equation}\label{E:C1a-error}
\begin{aligned}
u(x^{\theta}_{\pm}) &\leq u(x_{\pm}) \pm \delta \nabla u(x_{\pm}) \cdot w
+ \int_0^1 \delta \;
|t \delta w|^{\alpha}|u|_{\nCka(B_i)} \; |w|  \; dt \\
%&= u(x_{\pm}) \pm \delta \nabla u(x_{\pm}) \cdot w
%+ \frac{1}{1+\alpha} \delta^{1+\alpha} |w|^{1+\alpha} |u|_{\nCka(B_i)} \\
&\leq u(x_{\pm}) \pm \delta p \cdot w
+ C (\delta \theta)^{1+\alpha} |u|_{\nCka(B_i)} .
\end{aligned}
\end{equation}
%and a similar inequality for $u(x^{\theta}_-)$,
%\begin{equation*}
%u(x^{\theta}_-) \leq u(x_-) - \delta p \cdot w
%+ C (\delta \theta)^{1+\alpha} |u|_{\nCka(B_i)},
%\end{equation*}
%where $C$ stands for a constant independent of scales
%scales $h,\delta,\theta$ and solution $u$.
Therefore plugging the above inequalities into the expression
of $\sdd u(x_i;\vt)$ we obtain
\begin{equation*}
\begin{aligned}
\sdd u(x_i;\vt) &\leq \sdd u(x_i;v) +
\frac{1}{\delta^2} \l( \delta p \cdot w - \delta p \cdot w
+ 2 C (\delta \theta)^{1+\alpha} |u|_{\nCka(B_i)} \r) \\
&\leq C \frac{(\delta \theta)^{1+\alpha}}{\delta^2} |u|_{\nCka(B_i)},
\end{aligned}
\end{equation*}
and finish the proof of our claim.
%
%One direct consequence of our claim is a lower bound of $\Tve[\interp u;f](x_i)$,
%\begin{equation*}
%\Tve[\interp u;f](x_i) \geq -\sdd \interp u(x_i;\vt) \geq -C(d,\sigma)
%\frac{(\delta \theta)^{k+\alpha} + h^{k+\alpha}}{\delta^2}
%|u|_{\nCka(B_i)}.
%\end{equation*}

\textbf{Case 2: } $\dist(x_i,\mC(f)) < d \delta$ and $\; \dist(x_i,\pO) \geq d\delta$.
By the assumptions, there exists $y \in \mC(f) \setminus \pO$ such that
$|x_i - y| < d \delta$. We claim that if $k = 0$,
\begin{equation*}
f(x_i) - \interp u(x_i) \leq C(d,\sigma) \delta^{\alpha}
\l( |u|_{\Czeroa(\wt{B_i})} + |f|_{\Czeroa(\wt{B_i})} \r),
\end{equation*}
which is \eqref{E:error-contact-0}. This claim is a
consequence of $\interp u(x_i) = u(x_i), u(y) = f(y)$ and
\begin{equation*}
\begin{aligned}
&\l| u(x_i) - u(y) \r| \leq |x_i - y|^{\alpha} |u|_{\Czeroa(\wt{B_i})}
\leq d^{\alpha} \delta^{\alpha} |u|_{\Czeroa(\wt{B_i})}, \\
&\l| f(x_i) - f(y) \r| \leq |x_i - y|^{\alpha} |f|_{\Czeroa(\wt{B_i})}
\leq d^{\alpha} \delta^{\alpha} |f|_{\Czeroa(\wt{B_i})}.
\end{aligned}
\end{equation*}

If $k = 1$, we claim that
\begin{equation*}
f(x_i) - \interp u(x_i) \leq C(d,\sigma) \delta^{1+\alpha} |f|_{\Conea(\wt{B_i})},
\end{equation*}
which is \eqref{E:error-contact-1}. To prove this claim, we let $p = \nabla u(y)$,
then consider the supporting hyperplane
$P(x) := u(y) + (x - y) \cdot p$. Since $f$
is differentiable, $f(y) = P(y)$ and
$f(x) \geq u(x) \geq P(x)$, we know $p = \nabla f(y)$.
Proceeding similarly to \eqref{E:C1a-error}, we end up with
\begin{equation*}
\l| f(x_i) - P(x_i) \r| =
\l| f(x_i) - f(y) - (x_i - y) \cdot p \r|
\leq C(d,\sigma) \delta^{1+\alpha}
|f|_{\Conea(\wt{B_i})}.
\end{equation*}
Therefore our claim holds because
\begin{equation*}
f(x_i) - \interp u(x_i) = f(x_i) - u(x_i)
\leq f(x_i) - P(x_i)
\leq C(d,\sigma) \delta^{1+\alpha}
|f|_{\Conea(\wt{B_i})}.
\end{equation*}
%Since $\Tve[\interp u;f](x_i) \geq \interp u(x_i) - f(x_i)$,
%our claim also gives a lower bound of $\Tve[\interp u;f](x_i)$.
%
\textbf{Case 3:} $0 < \dist(x_i,\pO) < d\delta$.
We point out that, unlike the first two cases, the upper bound given in \eqref{E:error-boundary}
does not converge to zero as $\delta_i \to 0$. However, this result is still useful in
our proof of error estimates.
%in this case we cannot give a good lower bound
%of $\Tve[\interp u;f](x_i)$ that converges to zero when the parameters converge to zero in some way.
%However, we can still give an estimate to provide some information
%for the lower bound of $\Tve[\interp u;f](x_i)$.
%similarly to \eqref{E:sddIhuB} in \Cref{L:Consistency-smooth},
We claim that for all $v \in \mS$,
\begin{equation*}
\sdd \interp u(x_i;v) \leq C(d,\sigma) \delta_i^{k+\alpha-2} |u|_{\nCka(B_i)},
\end{equation*}
which is \eqref{E:error-boundary}. Using \eqref{E:interp-error} and the fact
$ \delta_i/h \geq C(d,\sigma)$ due to the shape-regularity assumption on the mesh $\Th$, we have
\begin{equation*}
\begin{aligned}
\l| \sdd u(x_i;v) - \sdd \interp u(x_i;v) \r|
&\leq C(d,\sigma) \frac{h^{k+\alpha}}{\delta_i^2} |u|_{\nCka(B_i)} \\
&\leq C(d,\sigma) \delta_i^{k+\alpha - 2} |u|_{\nCka(B_i)}.
\end{aligned}
\end{equation*}
Consequently, it just suffices to prove
\begin{equation*}
\sdd u(x_i;v) \leq C(d,\sigma) \delta_i^{k+\alpha-2} |u|_{\nCka(B_i)}.
\end{equation*}
If $k = 0$, this is obtained from
\begin{equation*}
\l| u(x_i \pm \delta_i v) - u(x_i) \r|
\leq \delta_i^{\alpha} |u|_{\Czeroa(B_i)}.
\end{equation*}
If $k = 1$, let $p = \nabla u(x_i)$ and $P(x) = u(x_i) + (x-x_i) \cdot p$, we have similarly to \eqref{E:C1a-error}
\begin{equation*}
\l| (u-P)(x_i \pm \delta_i v) \r|
\leq C \delta_i^{1+\alpha} |u|_{\Conea(B_i)}.
\end{equation*}
Therefore since $\sdd P(x_i;v) = 0$, our claim is a consequence of
\begin{equation*}
\sdd u(x_i;v) \leq
\sdd P(x_i;v) + \frac{C \delta_i^{1+\alpha} |u|_{\Conea(B_i)}}{\delta_i^2}
= C \delta_i^{\alpha-1} |u|_{\Conea(B_i)}.
\end{equation*}
This concludes the proof.
%Summing up all the analysis above, we arrive at the following
%lemma for the consistency error.
\end{proof}

\begin{comment}
\begin{remark}
When $x_i \in \Nhi$ falls into the third case in our proof, 
%In the case 3 of our proof, our claim gives a lower bound of $\Tve[\interp u;f](x_i)$ again.
%Another thing worth mentioning here is that
we could also give an upper bound of $\Tve[\interp u;f](x_i)$ through estimating
$f(x_i) - \interp u(x_i) = f(x_i) - u(x_i)$.
To illustrate this idea, we need to further assume $f \in \nCkao$.
Since $\dist(x_i,\pO) < d\delta$, there exists $y \in \pO$ such that
$|x_i - y| < d\delta$, and thus we have $u(y) = f(y)$. If $k + \alpha \geq 1$,
then by our assumptions both $u$ and $f$ are Lipschitz, so
\begin{equation*}
\begin{aligned}
&\l| u(x_i) - u(y) \r| \leq |x_i - y| \; |u|_{C^{0,1}(B_i)} < d\delta |u|_{C^{0,1}(B_i)}, \\
&\l| f(x_i) - f(y) \r| \leq |x_i - y| \; |f|_{C^{0,1}(B_i)} < d\delta |f|_{C^{0,1}(B_i)},
\end{aligned}
\end{equation*}
which combining with $u(y) = f(y)$ implies that
\begin{equation*}
\l| \interp u(x_i) - f(x_i) \r| = \l| u(x_i) - f(x_i) \r| \leq
d\delta \l(|u|_{C^{0,1}(B_i)} + |f|_{C^{0,1}(B_i)} \r).
\end{equation*}
If $k+ \alpha < 1$, similarly we have
\begin{equation*}
\l| \interp u(x_i) - f(x_i) \r| = \l| u(x_i) - f(x_i) \r| \leq
(d\delta)^{\alpha} \l(|u|_{\Czeroa(B_i)} + |f|_{\Czeroa(B_i)} \r).
\end{equation*}
One drawback of this estimate is when $k = 1$, the
estimate is $O(\delta)$ instead of $O(\delta^{k+\alpha})$.
This is due to the fact that when $y \in \pO$, even
though $u(y) = f(y)$, we do not have $\nabla u(y) = \nabla f(y)$,
which must hold if $y$ is in the interior of $\Omega$.
\end{remark}
\end{comment}
%----------------------------------------------------------------------------------
\subsection{Discrete Barrier Functions}\label{S:DBarrier}
%----------------------------------------------------------------------------------
%In the previous subsection,
In \Cref{Prop:Consistency-lowreg} (consistency for $u$ with H\"older regularity) we estimate the consistency error
for the convex envelope $u \in \nCkao$ for $k=0,1$
and $0<\alpha \leq 1$. In order to take advantage of this result
for error analysis, we now introduce two discrete barrier
functions. The first one is used to handle those $x_i \in \Nhi$
far from the contact set $\mC(f)$, which satisfy the condition
in \Cref{Prop:Consistency-lowreg}(i). The second discrete barrier
function is used to handle those $x_i \in \Nhi$ close to the boundary of $\Omega$,
which satisfy the condition in \Cref{Prop:Consistency-lowreg}(iii).

First we collect properties of the discrete barrier function $q_h$ introduced
in the proof of \Cref{L:DCP} (discrete comparison principle);
see also \cite[Lemma 4.1]{LiNo}.
\begin{Lemma}[discrete barrier $q_h$]\label{L:Barrier_qh}
Let $x_0 \in \Omega$ and $R = \textrm{diam} (\Omega)$.
The interpolant $q_h = \interp q \in \Vh$ of the function $q(x) = \frac{1}{2}|x-x_0|^2 - \frac{1}{2}R^2$ satisfies
\begin{subequations}
\begin{align}
\label{E:Barrier_qh-1}
\sdd q_h(x_i;v_j) \geq 1 \quad &\forall \; x_i \in \Nhi, \; v_j \in \mS, \\
\label{E:Barrier_qh-2}
-C \leq \; q_h(x) \; \leq 0 \quad &\forall \; x \in \Oh,
\end{align}
\end{subequations}
where constant $C$ only depends on $\Omega$.
\end{Lemma}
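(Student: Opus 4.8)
The plan is to exploit that $q$ is a quadratic with constant Hessian equal to the identity, so that its centered second difference is computed exactly, while its piecewise linear interpolant inherits the correct one-sided bounds from convexity.

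First I would establish the second-difference bound \eqref{E:Barrier_qh-1}. The key observation is that $q$ is convex, since $D^2 q = I$; consequently its Lagrange interpolant overestimates it, $q_h(x) \geq q(x)$ for all $x \in \Oh$ (Jensen's inequality applied vertexwise on each simplex), with equality at the nodes $x_i \in \Nh$. Substituting this into the centered second difference \eqref{E:2Sc2Dif} and using $q_h(x_i) = q(x_i)$ gives $\sdd q_h(x_i;v_j) \geq \sdd q(x_i;v_j)$. Because $q$ is a quadratic polynomial, the second difference is exact and reproduces the second directional derivative, namely $\sdd q(x_i;v_j) = \partial_{v_j v_j}^2 q(x_i) = v_j^T I v_j = 1$ for every unit vector $v_j \in \mS$. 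This yields $\sdd q_h(x_i;v_j) \geq 1$, which is precisely the computation already carried out in the proof of \Cref{L:DCP} (discrete comparison principle).

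For the pointwise bounds \eqref{E:Barrier_qh-2}, I would argue nodewise and then extend by affinity. Since $x_0 \in \Omega$ and $R = \textrm{diam}(\Omega)$, we have $|x - x_0| \leq R$ for every $x \in \Oc$, hence $-\tfrac{1}{2} R^2 \leq q(x) \leq 0$ on $\Oc$; in particular these bounds hold at every node, because $\Nh \subset \Ohc \subset \Oc$. On each simplex $T \in \Th$ the interpolant $q_h$ is affine, so its extrema over $T$ are attained at the vertices, where $q_h = q$. Therefore $-\tfrac{1}{2} R^2 \leq q_h(x) \leq 0$ throughout $\Oh$, which is \eqref{E:Barrier_qh-2} with $C = \tfrac{1}{2} R^2 = \tfrac{1}{2}\, \textrm{diam}(\Omega)^2$ depending only on $\Omega$.

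This lemma presents no genuine obstacle; the only points requiring care are the two standard facts about the interpolation of a convex quadratic: that piecewise linear interpolation overestimates a convex function, which gives one-sided control of the second difference, and that the centered second difference reproduces the second derivative exactly for quadratics. Both are elementary, and all constants track only the geometry of $\Omega$.
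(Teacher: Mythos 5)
Your proof is correct and follows essentially the same route as the paper, which establishes \eqref{E:Barrier_qh-1} and \eqref{E:Barrier_qh-2} inside the proof of \Cref{L:DCP} via the same two facts: the Lagrange interpolant of the convex quadratic $q$ dominates $q$ with equality at the nodes, so $\sdd q_h(x_i;v) \geq \sdd q(x_i;v) = \partial^2_{vv}q(x_i) = 1$, and the bounds $-\tfrac12 R^2 \leq q \leq 0$ on $\Oc$ transfer to $q_h$ by piecewise affinity. Nothing is missing.
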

 
Now we construct our second discrete barrier function $p_h(x)$.
For $k=0,1$ and $0<\alpha \leq 1$, $p_h$ is to satisfy the property
\begin{equation*}
\max_{\vt \in \St} \sdd p_h(x_i;\vt) \geq \delta_i^{k+\alpha-2},
\quad \forall \; x_i \in \Nhi \setminus \Ohdd.
\end{equation*}
We consider a convex function $\eta: [0,\infty) \rightarrow (-\infty,0]$ satisfying
\begin{equation}\label{E:eta-prop-1}
\eta''(t) = 2^{4-k-\alpha} \ t^{k+\alpha-2} \quad t \in (0,2d\delta);
\quad \eta(0) = 0; \quad
\eta'(t) = 0 \quad t \geq 2d\delta.
\end{equation}
Simple calculations reveal that for $k + \alpha \neq 1$,
\begin{equation*}
\eta(t) = \left\{
\begin{array}{ll}
\frac{2^{4-k-\alpha}}{k+\alpha-1}
\l( \frac{1}{k+\alpha}t^{k+\alpha} -
(2d\delta)^{k+\alpha-1}t \r) \quad & \quad 0 \leq t \leq 2d\delta\\
-\frac{16}{k+\alpha} (d\delta)^{k+\alpha}
\quad & \quad t > 2d\delta,
\end{array}
\right.
\end{equation*}
and for $k + \alpha = 1$,
\begin{equation*}
\eta(t) = \left\{
\begin{array}{ll}
8t \l( \ln{t} - \ln(2d\delta) - 1 \r)
\quad & \quad 0 \leq t \leq 2d\delta \\
-16d\delta \quad & \quad t > 2d\delta.
\end{array}
\right.
\end{equation*}
It can be seen immediately that $\eta$ is monotonically non-increasing, and satisfies
\begin{equation}\label{E:eta-prop-2}
-C \delta^{k+\alpha} \leq \eta(t) \leq 0 \qquad \forall t \geq 0.
\end{equation}
Then we define the barrier function $p_h$ as
\begin{equation}\label{E:def_p}
p(x) := \eta(\dist(x, \partial \Oh)) \quad x \in \Oh,
\end{equation}
and denote by $p_h = \interp p \in \Vh$ its Lagrange interpolant.
The following lemma is similar to \cite[Section 6.2]{NoZh1} and
\cite[Lemma 4.2]{LiNo}.
\begin{Lemma}[discrete barrier $p_h$]\label{L:Barrier_p}
If $\Omega$ is strictly convex
and $\theta \leq 1$, then
the discrete barrier function $p_h$
defined in \eqref{E:def_p} satisfies
\begin{subequations}
\begin{gather}\label{E:Barrier_ph-1}
\max_{\vt \in \St} \sdd p_h(x_i;\vt) \geq \; C\delta_i^{k+\alpha-2}
\quad \forall \; x_i \in \Nhi \setminus \Ohdd, \\
\label{E:Barrier_ph-2}
\sdd p_h(x_i;v) \geq \; 0 \quad \forall \;
x_i \in \Nhi, \; v \in \mS , \\
\label{E:Barrier_ph-3}
-C\delta^{k+\alpha}  \leq \; p_h(x) \; \leq 0 \quad \forall \;
x \in \Oh.
\end{gather}
\end{subequations}
Moreover, for $x_i \in \Nhi \setminus \Ohdd$, we could choose $\vt \in \St$ only depending on $x_i, \St$
to satisfy $\sdd p_h(x_i;\vt) \geq \; \delta_i^{k+\alpha-2}$.
\end{Lemma}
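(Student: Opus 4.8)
The plan is to establish the two ``soft'' bounds \eqref{E:Barrier_ph-2} and \eqref{E:Barrier_ph-3} first, as consequences of the monotonicity of Lagrange interpolation and of convexity, and then concentrate on the quantitative lower bound \eqref{E:Barrier_ph-1}. The organizing observation is that the continuous barrier $p(x)=\eta(\dist(x,\partial\Oh))$ is convex on $\Oh$: since $\Oh$ is convex the function $\dist(\cdot,\partial\Oh)$ is concave, while $\eta$ is convex and non-increasing by \eqref{E:eta-prop-1}, and the composition of a non-increasing convex function with a concave function is convex. Consequently its Lagrange interpolant satisfies $p_h=\interp p\ge p$ on $\Oh$, with equality at the nodes. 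This single inequality drives the whole proof, because it gives $\sdd p_h(x_i;v)\ge\sdd p(x_i;v)$ for every $x_i\in\Nhi$ and $v\in\mS$: the three evaluation points $x_i,\ x_i\pm\delta_i v$ all lie in $\overline{B(x_i,\delta_i)}\subset\Ohc$, the center value is reproduced exactly, and the two outer values only increase under interpolation.

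Granting this, \eqref{E:Barrier_ph-3} is immediate: by \eqref{E:eta-prop-2} the nodal values of $p$ lie in $[-C\delta^{k+\alpha},0]$, and since piecewise linear interpolation preserves two-sided bounds on nodal values, the same holds for $p_h$. Likewise \eqref{E:Barrier_ph-2} follows at once, because a convex function has non-negative centered second difference, whence $\sdd p_h(x_i;v)\ge\sdd p(x_i;v)\ge0$.

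For \eqref{E:Barrier_ph-1} I would first fix the direction, which also settles the ``moreover'' dependence claim. For $x_i\in\Nhi\setminus\Ohdd$ let $y_i\in\partial\Oh$ be a nearest boundary point, set $r:=\dist(x_i,\partial\Oh)=|x_i-y_i|<d\delta$, let $\nu:=(x_i-y_i)/r$ be the inward unit normal, and choose $\vt\in\St$ with $|\vt-\nu|\le\theta$; this $\vt$ depends only on $x_i$ and $\St$, as required. Using $\sdd p_h\ge\sdd p$ it suffices to bound $\sdd p(x_i;\vt)$ from below, and the key geometric step is a reduction to one dimension. Since $\Oh$ is convex, the supporting hyperplane at $y_i$ yields $\dist(z,\partial\Oh)\le(z-y_i)\cdot\nu$ for all $z\in\Oh$; applying this at $z=x_i\pm\delta_i\vt$, writing $s:=\vt\cdot\nu\ge1-\tfrac12\theta^2\ge\tfrac12$, and invoking the monotonicity of $\eta$ gives
\begin{equation*}
\delta_i^2\,\sdd p(x_i;\vt)\ \ge\ \eta(r+\delta_i s)-2\eta(r)+\eta(r-\delta_i s).
\end{equation*}
Because $r<d\delta$ and $\delta_i\le\min\{\delta,r\}$, all three arguments lie in $[0,2d\delta)$, where $\eta''(t)=2^{4-k-\alpha}\,t^{k+\alpha-2}$ by \eqref{E:eta-prop-1}.

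It then remains to estimate this purely one-dimensional second difference of $\eta$. Expressing it through the tent-kernel (second divided difference) representation and inserting the power law produces a weighted average of $t^{k+\alpha-2}$ over $t\in[r-\delta_i s,\,r+\delta_i s]$, multiplied by $s^2\,2^{4-k-\alpha}$; bounding the decreasing integrand below at a suitable point and using $r+\delta_i s\le 2d\delta_i$ converts this into a multiple of $\delta_i^{k+\alpha-2}$. The normalization $2^{4-k-\alpha}$ in \eqref{E:eta-prop-1} together with $s\ge\tfrac12$ is calibrated precisely so that this multiple is at least one, which is the sharper ``moreover'' statement and a fortiori \eqref{E:Barrier_ph-1}. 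I expect the main obstacle to be exactly this one-dimensional estimate and its constant bookkeeping. When $k+\alpha\le1$ the weight $\eta''$ is non-integrable at the origin, so in the degenerate regime where the inner evaluation point $r-\delta_i s$ approaches $\partial\Oh$ the integral representation breaks down; there one argues directly from the closed form of $\eta$ and the normalization $\eta(0)=0$, which only enlarges the second difference and hence does no harm. Checking that the chosen direction keeps all evaluation points inside $\Oh$ and within the support $(0,2d\delta)$ of $\eta''$, across both sub-cases $\delta_i=\delta$ and $\delta_i=r$, is the remaining routine verification.
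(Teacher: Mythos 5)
Your proof is correct and essentially identical to the paper's: both establish convexity of $p$ from the supporting-hyperplane geometry of the convex set $\Oh$ together with the monotonicity and convexity of $\eta$ (the paper checks midpoint convexity directly, you invoke the composition rule for the concave distance function — the same substance), deduce \eqref{E:Barrier_ph-2}--\eqref{E:Barrier_ph-3} via $\sdd p_h \ge \sdd p$ and \eqref{E:eta-prop-2}, and prove \eqref{E:Barrier_ph-1} by reducing to a one-dimensional second difference of $\eta$ along $\vt \in \St$ with $\langle \vt,\nu\rangle \ge \tfrac12$ and bounding $\eta''$ from below on $[0,2d\delta_i]$. One caveat: this computation (yours via the tent-kernel representation, the paper's via the Taylor mean-value form) actually yields the constant $d^{k+\alpha-2}\le 1$, so your assertion that the normalization makes the multiple ``at least one'' --- and likewise the paper's own unit-constant ``moreover'' clause --- holds only up to this dimensional factor, which is immaterial for the use of the lemma in the error estimate.
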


\begin{proof}
We proceed as in \cite[Lemma 4.2]{LiNo}.
We first study the function $p$ defined on the convex domain $\Oh \subset \Omega$; the properties of $p_h$ will be simple consequences of those of $p$. Define $d(x) := \dist(x,\Oh)$ for any $x \in \Oh$.
Given any $x_0 \in \Omega_h$, let $y \in \partial \Oh$ be a
(closest) point so that
\begin{equation*}
|y - x_0| = d(x_0).
\end{equation*}
Since $\Oh$ is convex,
there exists a supporting hyperplane $P$ of $\Oh$ touching $\Oh$ at $y$ and
perpendicular to $\nu := \frac{x_0 - y}{|x_0 - y|}$.
Consider any two points $x_+, x_- \in\Oh$
so that $x_0 = (x_+ + x_-)/2$. Then there exists a
vector $v$ such that $x_{\pm} = x_0 \pm v$ and,
without loss of generality, $\langle v, \nu \rangle \geq 0$; hence
\begin{equation}\label{E:proof-ph}
d(x_{\pm}) \leq \dist(x_{\pm},P) = d(x_0) \pm \langle v, \nu \rangle.
\end{equation}
We now show that $p(x)$ is convex.
We exploit that $\eta$ is a nonincreasing convex function,
and $d(x_0) - \langle v, \nu \rangle \geq 0$, to write
\begin{equation*}
\begin{aligned}
p(x_+) + p(x_-) \geq \;
\eta \l( d(x_0) + \langle v, \nu \rangle \r)
+ \eta \l( d(x_0) - \langle v, \nu \rangle \r)
\geq \; 2 \eta \l(d(x_0) \r)
= \; 2 p(x_0).
\end{aligned}
\end{equation*}
Since this holds for any $x_{\pm}, x_0$ satisfying $x_0 = (x_+ + x_-)/2$,
we deduce that $p(x)$ is convex in $\Oh$. This immediately implies
\eqref{E:Barrier_ph-2}:
\[
\sdd p_h(x_i;v) \geq \; \sdd p(x_i;v) \ge \; 0 \quad \forall \;
x_i \in \Nhi, \; v \in \mS.
\]

We next prove \eqref{E:Barrier_ph-1}.
If $x_i \in \Nhi \setminus \Ohdd$, then
$\delta_i \leq d(x_i) \leq d\delta_i \le d\delta$ and
$d(x_i) \pm \delta_i \in [0,2d(x_i)] \subset [0,2d\delta]$, where $\delta_i \leq \delta$ is defined in \eqref{E:deltai}.
It follows from the definition \eqref{E:def_p} of $p$, inequality \eqref{E:proof-ph} and the monotonicity of $\eta$ that
\begin{equation*}
\begin{aligned}
\sdd p_h(x_i;v) \geq & \sdd p(x_i; v) =
\frac{p(x_i+ \delta_i v)+p(x_i- \delta_i v)-2p(x_i)}{\delta_i^2} \\
\geq & \frac{\eta \l(d(x_i) + \delta_i \langle v,\nu \rangle \r)
+ \eta \l(d(x_i) - \delta_i \langle v,\nu \rangle \r)
-2\eta \l(d(x_i)\r)}{\delta_i^2},
\end{aligned}
\end{equation*}
for all $v \in \mS$. Using the fact that for $t \in [0,2d(x_i)]$,
\begin{equation*}
\eta''(t) \geq 2^{4-k-\alpha} \l(2d(x_i)\r)^{k+\alpha-2}
= 4d(x_i)^{k+\alpha-2} \geq 4 (d\delta_i)^{k+\alpha-2},
\end{equation*}
Taylor expansion gives
\begin{equation*}
\begin{aligned}
\sdd p_h(x_i;v) 
\ge & \frac{\eta''(\xi) \l(\delta_i \langle v,\nu \rangle \r)^2}{\delta_i^2} 
\ge \frac{4 (d\delta_i)^{k+\alpha-2} \; \delta_i^2 \langle v,\nu \rangle^2}{\delta_i^2}
= 4 \langle v,\nu \rangle^2 (d\delta_i)^{k+\alpha-2},
\end{aligned}
\end{equation*}
where $\xi \in (0, 2d(x_i))$.
By definition of $\St$, there exists $\vt \in \St$ such that
$|\vt - \nu| \leq \theta \leq 1$, whence
\begin{equation*}
\langle \vt, \nu \rangle = \frac{|\vt|^2+|\nu|^2 - |\vt - \nu|^2}{2}
\geq \frac{1}{2},
\end{equation*}
which yields $\sdd p_h(x_i;\vt) \geq 4 \langle v_{\theta},\nu \rangle^2 (d\delta_i)^{k+\alpha-2} \geq C\delta_i^{k+\alpha-2} $.
This proves \eqref{E:Barrier_ph-1}, whereas \eqref{E:Barrier_ph-3} is a direct consequence of \eqref{E:eta-prop-2}.
%Finally, the choice of $\vt$ depends only on
%$x_i$ and $\St$ but is independent of $s$ as asserted.
\end{proof}

\begin{remark}[boundary resolution]
Notice that we only assume $\theta \leq 1$ here. Our two-scale method can actually be generalized in such a way that each $x_i \in \Nhi$ has a different choice of $\St(x_i)$.
In fact, in our derivation
of error estimate later, for those $x_i$ with
$\dist(x_i,\pO) < d\delta$, we only require the $\St(x_i)$ to satisfy
requirements of discretization for $\theta \leq 1$. This means in practice, for nodes
near the boundary $\pO$, we do not need as many directions as for the nodes in the interior region.
\end{remark}

%----------------------------------------------------------------------------------
\subsection{Error Estimates for Solutions with H\"older Regularity}\label{S:RatesHolder}
%----------------------------------------------------------------------------------
In this subsection we deal with solutions
$u$ of \eqref{E:pde-CE} of class $\nCkao$ for $k=0,1$ and $0<\alpha \leq 1$,
and derive convergence rates in the $L^{\infty}$ norm.
Our main analytic tool is \Cref{L:DCP} (discrete comparison principle),
along with the results of Sections \ref{S:Consistency-lowreg}
and \ref{S:DBarrier}.

\begin{Theorem}[error estimate]\label{T:error-estimate}
Let $\Omega$ be strictly convex.
Let $u$ be the viscosity solution of \eqref{E:pde-CE} and $\uve$ be the discrete
solution of \eqref{E:2ScOp}. If $u \in \nCkao$ for $k=0,1$
and $0<\alpha \leq 1$, and $\theta \leq 1$,
there exists a constant $C = C(\Omega,d,\sigma)$ such that
{\small
\begin{equation}\label{E:error-estimate}
\Vert \interp u - \uve \Vert_{L^{\infty}(\Oh)}
\leq C \left[
|u|_{\nCkao} \frac{(\delta \theta)^{k+\alpha} + h^{k+\alpha}+ \delta^{2+k+\alpha}}{\delta^2}
+ |f|_{\nCkao} \delta^{k+\alpha} \right].
\end{equation}}
\end{Theorem}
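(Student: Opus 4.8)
The plan is to extract the estimate from the discrete comparison principle \Cref{L:DCP} by squeezing $\uve$ between $\interp u$ and a carefully engineered discrete supersolution. One direction comes for free: since $u$ is convex and $\interp u(x_i)=u(x_i)\le f(x_i)$ at every interior node, the interpolant $\interp u$ is a discrete subsolution, so $\Tve[\interp u;f]\ge 0=\Tve[\uve;f]$ while $\interp u=\uve=f$ on $\Nhb$, and \Cref{L:DCP} yields $\interp u\le\uve$ in $\Oh$. The whole work is thus to bound $\uve-\interp u$ from above, and for this I would build a supersolution that dominates $\uve$ on the boundary.

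The supersolution I would use is
\[
w_h := \interp u - M_1\, q_h - M_2\, p_h + E,
\]
with nonnegative constants $M_1,M_2,E$ (suitable multiples of $|u|_{\nCkao}$ and $|f|_{\nCkao}$) and the barriers $q_h,p_h$ of \Cref{L:Barrier_qh} and \Cref{L:Barrier_p}. Because $q_h,p_h\le 0$ and $p_h=0$ on $\Nhb$, we get $w_h\ge\interp u+E\ge f=\uve$ on $\Nhb$, so the boundary ordering needed in \Cref{L:DCP} holds. To verify $\Tve[w_h;f](x_i)\le 0$ at every interior node I would split into the three mutually exclusive cases of \Cref{Prop:Consistency-lowreg}, using that $\Tve$ is a minimum so that it suffices for one of its two arguments to be nonpositive. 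For a node far from the contact set (case (i)), $\sdd q_h\ge 1$ and $\sdd p_h\ge 0$ give $\min_{v}\sdd w_h(x_i;v)\le\min_{v}\sdd\interp u(x_i;v)-M_1\le 0$ once $M_1\ge C\frac{(\delta\theta)^{k+\alpha}+h^{k+\alpha}}{\delta^2}|u|_{\nCkao}$ by \eqref{E:error-noncontact}. For a node near $\pO$ (case (iii)), I would pick the direction $\vt$ from the ``moreover'' clause of \Cref{L:Barrier_p} with $\sdd p_h(x_i;\vt)\ge\delta_i^{k+\alpha-2}$ and invoke \eqref{E:error-boundary} for that \emph{same} $\vt$, obtaining $\sdd w_h(x_i;\vt)\le(C|u|_{\nCkao}-M_2)\delta_i^{k+\alpha-2}\le 0$ once $M_2\ge C|u|_{\nCkao}$. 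Finally, in the contact region (case (ii)), since $q_h,p_h\le 0$ we have $f(x_i)-w_h(x_i)\le (f-u)(x_i)-E\le 0$ once $E\ge C\delta^{k+\alpha}\bigl(|u|_{\nCkao}+|f|_{\nCkao}\bigr)$ by \eqref{E:error-contact-0}--\eqref{E:error-contact-1}.

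With $w_h$ confirmed as a discrete supersolution satisfying $w_h\ge\uve$ on $\Nhb$, \Cref{L:DCP} (taken with $u_h=\uve$) gives $\uve\le w_h$ in $\Oh$, whence $\uve-\interp u\le -M_1 q_h-M_2 p_h+E\le M_1\|q_h\|_{L^{\infty}(\Oh)}+M_2\|p_h\|_{L^{\infty}(\Oh)}+E$. Inserting $\|q_h\|_{L^{\infty}(\Oh)}\le C$ and $\|p_h\|_{L^{\infty}(\Oh)}\le C\delta^{k+\alpha}$ from \eqref{E:Barrier_qh-2} and \eqref{E:Barrier_ph-3}, and rewriting $\delta^{k+\alpha}=\delta^{2+k+\alpha}/\delta^2$ to merge the $p_h$- and $E$-contributions into the $|u|_{\nCkao}$ term, reproduces exactly \eqref{E:error-estimate}. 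Together with $\interp u\le\uve$ (so the opposite sign is trivially below the right-hand side), and since $\interp u,\uve\in\Vh$ attain their extremal difference at a node, this controls $\|\interp u-\uve\|_{L^{\infty}(\Oh)}$.

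The main obstacle is that a \emph{single} $w_h$, with its constants fixed once and for all, must be a supersolution simultaneously at every node, so the three barrier mechanisms have to be compatible rather than mutually interfering. This works precisely because $\Tve$ is a minimum and because the near-boundary bound \eqref{E:error-boundary}---which does \emph{not} vanish as $\delta_i\to 0$---is matched exactly by the growth $\delta_i^{k+\alpha-2}$ that $p_h$ supplies; aligning the chosen direction $\vt$ for $p_h$ with the direction in which \eqref{E:error-boundary} is asserted (valid for all $v\in\mS$) is what lets case (iii) close without spoiling the accuracy in the interior.
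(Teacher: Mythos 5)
Your proposal is correct and takes essentially the same route as the paper's own proof: the lower bound via the subsolution $\interp u$ and \Cref{L:DCP}, an upper bound via the supersolution $\interp u - K_1 q_h + K_2 - K_3 p_h$ (your $w_h$ with $M_1,E,M_2$ renaming $K_1,K_2,K_3$), the same three-case verification at interior nodes using \Cref{Prop:Consistency-lowreg} together with \Cref{L:Barrier_qh} and \Cref{L:Barrier_p}, and the same final assembly of the constants and barrier bounds \eqref{E:Barrier_qh-2}, \eqref{E:Barrier_ph-3} into \eqref{E:error-estimate}. Your handling of the near-boundary case---aligning the direction $\vt$ from the ``moreover'' clause of \Cref{L:Barrier_p} with the all-directions bound \eqref{E:error-boundary}---is precisely the mechanism the paper uses.
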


\begin{proof}
We find lower and upper bounds of
$\uve$ in terms of $\interp u$. For the lower bound,
we recall that $u_h^- = \interp u$ is a discrete subsolution
of \eqref{E:2ScOp} and satisfies $u_h^- \leq \uve$ from
\eqref{E:disc-sol-bounds} in the proof of \Cref{L:Exist-Uniq-Stab}
(existence, uniqueness and stability), thereby yielding a lower bound of $\uve$.

For the upper bound, we construct a discrete supersolution $u_h^+ \in \Vh$ such that
\begin{equation*}
\left\{
\begin{aligned}
\Tve[u_h^+;f](x_i) &\leq 0 \quad \forall x_i \in \Nhi \\
u_h^+(x_i) &\geq f(x_i) \quad \forall x_i \in \Nhb,
\end{aligned}
\right.
\end{equation*}
upon suitably modifying $\interp u$.
We let $u_h^+ \in \Vh$ be of the form
\begin{equation*}
u_h^+ = \interp u - K_1 q_h + K_2 - K_3 p_h,
\end{equation*}
where $q_h, p_h \leq 0$ in $\Oh$ according to \eqref{E:Barrier_qh-2}
and \eqref{E:Barrier_ph-3}, and the positive constants
$K_1, K_2, K_3$ are to be chosen properly. Since
\begin{equation*}
u_h^+(x_i) \geq \interp u(x_i) = f(x_i) \quad\forall \; x_i \in \Nhb,
\end{equation*}
to guarantee that $u_h^+$ is a discrete supersolution,
it remains to show $\Tve[u_h^+;f](x_i) \leq 0$
for all $x_i \in \Nhi$. We divide the subsequent discussion into three cases
based on the position of $x_i$ relative to $\mC(f)$ and $\pO$,
exactly as in \Cref{Prop:Consistency-lowreg}.

If $\dist(x_i,\mC(f)) \geq d \delta$, using the estimate
\eqref{E:error-noncontact} of \Cref{Prop:Consistency-lowreg} (consistency for $u$ with H\"older regularity) and the properties \eqref{E:Barrier_qh-1} of $q_h$ and \eqref{E:Barrier_ph-2} of $p_h$, we have
\begin{equation*}
\begin{aligned}
\min_{v \in \St} \sdd u_h^+(x_i;v)
&\leq \min_{\vt \in \St} \sdd [\interp u - K_1 q_h](x_i;v)
\leq \min_{\vt \in \St} \sdd \interp u(x_i;v) - K_1 \\
&\leq C(d,\sigma) \frac{(\delta \theta)^{k+\alpha} + h^{k+\alpha}}{\delta^2}
|u|_{\nCka(B_i)} - K_1 \leq 0,
\end{aligned}
\end{equation*}
provided that $K_1 = C(d,\sigma) \frac{(\delta \theta)^{k+\alpha}
+ h^{k+\alpha}}{\delta^2} |u|_{\nCkao}$.
Consequently,
\begin{equation*}
\Tve[u_h^+;f](x_i) \leq \min_{v \in \St} \sdd u_h^+(x_i;v) \leq 0.
\end{equation*}

If $\; \dist(x_i,\mC(f)) < d \delta, \; \dist(x_i,\pO) \geq d\delta$,
from \eqref{E:error-contact-0} and \eqref{E:error-contact-1}
in \Cref{Prop:Consistency-lowreg}, we have
\begin{equation*}
\begin{aligned}
f(x_i) - u_h^+(x_i) &\leq f(x_i) - \interp u(x_i) - K_2 \\
&\leq C(d,\sigma) \delta^{k+\alpha}
\l( |u|_{\nCka(\wt{B_i})} + |f|_{\nCka(\wt{B_i})} \r) - K_2 \leq 0,
\end{aligned}
\end{equation*}
with $K_2 = C(d,\sigma) \delta^{k+\alpha} \l( |u|_{\nCka(\overline{\Omega})}
+ |f|_{\nCka(\overline{\Omega})} \r)$. This implies
$\Tve[u_h^+;f](x_i) \leq f(x_i) - u_h^+(x_i) \leq 0$.

If $\; \dist(x_i,\pO) < d\delta$, we have $x_i \in \Nhi \setminus \Omega_{h,d\delta}$. Choosing  $K_3 = C(d,\sigma)|u|_{\nCkao}$ and invoking \eqref{E:error-boundary} in \Cref{Prop:Consistency-lowreg} and
the property \eqref{E:Barrier_ph-1} of $p_h$, we have
\begin{equation*}
\begin{aligned}
\small
\min_{v \in \St} \; \sdd u_h^+(x_i;v) &\leq
\min_{v \in \St} \; \sdd [\interp u - K_3 p_h](x_i;v) \\
&\leq C(d,\sigma) \delta_i^{k+\alpha-2} |u|_{\nCka(B_i)}
- K_3 \max_{v \in \St} \sdd p_h(x_i;v) \\
& \leq C(d,\sigma) \delta_i^{k+\alpha-2} |u|_{\nCka(B_i)}
- C(d,\sigma)|u|_{\nCkao} \; \delta_i^{k+\alpha-2} \le 0.
\end{aligned}
\end{equation*}
Therefore $\Tve[u_h^+;f](x_i) \leq \min_{v \in \St} \sdd u_h^+(x_i;v) \leq 0$.
The three cases show that $u_h^+$ is a discrete supersolution, and thus
by \Cref{L:DCP} (discrete comparison principle),
\begin{equation*}
\begin{aligned}
\uve \leq \; & \interp u - K_1 q_h + K_2 - K_3 p_h \\
= \; & \interp u + C(d,\sigma, \Omega) \frac{(\delta \theta)^{k+\alpha}
+ h^{k+\alpha}}{\delta^2} |u|_{\nCkao} \\
&+ C(d,\sigma) \delta^{k+\alpha} \l( |u|_{\nCkao} + |f|_{\nCkao} \r)
 + C(d,\sigma)|u|_{\nCkao} \delta^{k+\alpha}.
\end{aligned}
\end{equation*}
This, conjunction with the lower bound of $\uve$, completes
the proof.
\end{proof}

\begin{Corollary}[convergence rate]\label{C:convergence-rate}
Let $\Omega$ be strictly convex.
Let $u$ be the viscosity solution of \eqref{E:pde-CE} and $\uve$ be the discrete
solution of \eqref{E:2ScOp}. If $u \in \nCkao$ for $k=0,1$
and $0<\alpha \leq 1$, and $\theta \leq 1$, we have
\begin{equation}\label{E:convergence-rate}
    \|u-\uve\|_{L^\infty(\Oh)} \le C(\Omega,d,\sigma)
    \Big( |u|_{\nCkao} + |f|_{\nCkao} \Big) \; h^{\frac{(k+\alpha)^2}{2+k+\alpha}},
\end{equation}
provided $R_\alpha(u) :=
|u|_{\nCkao}^{\frac{1}{2+k+\alpha}}
\Big(|u|_{\nCkao} + |f|_{\nCkao} \Big)^{-\frac{1}{2+k+\alpha}}$
and
\begin{equation*}
\delta = R_\alpha(u) h^{\frac{k+\alpha}{2+k+\alpha}},
\quad
\theta = R_\alpha(u)^{-1} h^{\frac{2}{2+k+\alpha}}.
\end{equation*}
\end{Corollary}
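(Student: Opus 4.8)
The plan is to derive the convergence rate by optimizing the error bound of \Cref{T:error-estimate} over the free parameters $\delta$ and $\theta$, and then combining the resulting estimate on $\interp u - \uve$ with the standard interpolation error between $u$ and $\interp u$. First I would invoke \eqref{E:error-estimate}, which gives
\begin{equation*}
\Vert \interp u - \uve \Vert_{L^{\infty}(\Oh)}
\leq C \left[
|u|_{\nCkao} \frac{(\delta \theta)^{k+\alpha} + h^{k+\alpha}+ \delta^{2+k+\alpha}}{\delta^2}
+ |f|_{\nCkao} \delta^{k+\alpha} \right].
\end{equation*}
The right-hand side is a sum of several monotone power-law terms in $h,\delta,\theta$, so the task is essentially a balancing exercise: choose $\theta$ to make the mixed term $(\delta\theta)^{k+\alpha}/\delta^2$ match the interpolation term $h^{k+\alpha}/\delta^2$, and then choose $\delta$ to equilibrate the remaining competition between the terms that decrease in $\delta$ and those that increase in $\delta$.

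The key algebraic steps proceed as follows. First, setting the two numerator contributions equal by imposing $(\delta\theta)^{k+\alpha} \sim h^{k+\alpha}$ suggests $\delta\theta \sim h$; the prescribed choices $\delta = R_\alpha(u) h^{\frac{k+\alpha}{2+k+\alpha}}$ and $\theta = R_\alpha(u)^{-1} h^{\frac{2}{2+k+\alpha}}$ indeed give $\delta\theta = h^{\frac{k+\alpha+2}{2+k+\alpha}} = h$, so that term collapses to $h^{k+\alpha}/\delta^2$ exactly. Next I would substitute these choices into each surviving term of the bound. The dominant $u$-contribution becomes $|u|_{\nCkao}\, h^{k+\alpha}/\delta^2$, and using $\delta^2 = R_\alpha(u)^2 h^{\frac{2(k+\alpha)}{2+k+\alpha}}$ one computes the exponent of $h$ to be $(k+\alpha) - \frac{2(k+\alpha)}{2+k+\alpha} = \frac{(k+\alpha)^2}{2+k+\alpha}$, which is precisely the target rate. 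The term $|u|_{\nCkao}\,\delta^{2+k+\alpha}/\delta^2 = |u|_{\nCkao}\,\delta^{k+\alpha}$ and the term $|f|_{\nCkao}\,\delta^{k+\alpha}$ both scale like $\delta^{k+\alpha} = R_\alpha(u)^{k+\alpha} h^{\frac{(k+\alpha)^2}{2+k+\alpha}}$, matching the same power of $h$. The role of the factor $R_\alpha(u)$, which is a ratio of the two seminorms raised to the power $\tfrac{1}{2+k+\alpha}$, is exactly to make all of these terms carry a \emph{common} multiplicative constant of the form $\bigl(|u|_{\nCkao} + |f|_{\nCkao}\bigr)$ times the power of $h$, and verifying that bookkeeping is where I would be most careful.

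Finally I would upgrade the estimate from $\interp u - \uve$ to $u - \uve$ via the triangle inequality and the interpolation bound \eqref{E:interp-error}, namely $\|u - \interp u\|_{L^{\infty}(\Oh)} \leq C(d,\sigma)\, h^{k+\alpha}|u|_{\nCka(\Oh)}$. Since $h^{k+\alpha}$ decays faster than $h^{\frac{(k+\alpha)^2}{2+k+\alpha}}$ (because $k+\alpha > \frac{(k+\alpha)^2}{2+k+\alpha}$ whenever $k+\alpha > 0$), this interpolation term is absorbed into the asserted rate, yielding \eqref{E:convergence-rate}. I expect no genuine obstacle here: the statement is a parameter-optimization corollary of \Cref{T:error-estimate}, and the main work is the careful tracking of the seminorm-dependent constant $R_\alpha(u)$ to confirm that every term indeed collapses to the single clean bound $C\bigl(|u|_{\nCkao} + |f|_{\nCkao}\bigr) h^{\frac{(k+\alpha)^2}{2+k+\alpha}}$. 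One subtlety worth checking is that the chosen $\theta$ satisfies the standing hypothesis $\theta \leq 1$ for $h$ small, and that $\delta \geq h$ so the two-scale structure remains meaningful; both follow from the exponents $\frac{2}{2+k+\alpha} > 0$ and $\frac{k+\alpha}{2+k+\alpha} < 1$ for small $h$.
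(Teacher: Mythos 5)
Your proposal is correct and follows essentially the same route as the paper: invoke the bound of \Cref{T:error-estimate}, absorb the interpolation error $\|u-\interp u\|_{L^\infty(\Oh)} \le C h^{k+\alpha}|u|_{\nCkao}$ via the triangle inequality, and substitute the stated $\delta,\theta$ (the paper merely derives these choices by balancing, setting $\theta = h/\delta$ and equating the remaining terms, whereas you verify them directly). Your bookkeeping of $R_\alpha(u)$ and the checks $\theta\le 1$, $\delta\ge h$ for small $h$ are accurate, so nothing is missing.
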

\begin{proof}
Since the pointwise interpolation error satisfies \cite{BrennerScott}
\begin{equation*}
\|u - \interp u\|_{L^\infty(\Oh)} \le C h^{k+\alpha} |u|_{\nCkao}
\le C \frac{h^{k+\alpha}}{\delta^2} |u|_{\nCkao},
\end{equation*}
and $h \le \delta$, we end up with the error estimate
\begin{equation*}
\|u - \uve\|_{L^\infty(\Oh)} \le C \left[ |u|_{\nCkao}
\frac{h^{k+\alpha} + (\delta \theta)^{k+\alpha}}{\delta^2}
+ \Big(|u|_{\nCkao} + |u|_{\nCkao}\Big) \delta^{k+\alpha} \right].
\end{equation*}
In order to balance all contributions, we
first choose $\theta=\frac{h}{\delta}$ and next equate the two terms
on the right-hand side to obtain the asserted relations between
$\delta,\theta$ and $h$. This completes the proof.
\end{proof}

\begin{remark}[two important scenarios]\label{R:two-scenarios}
We want to point out two important scenarios based on the regularity of $u$ for \Cref{C:convergence-rate} (convergence rate).
\begin{enumerate}[$\bullet$]
\item Full regularity $u \in C^{1,1}(\Oc)$, 
i.e. $k = \alpha = 1$. The optimal choice of parameters $\delta \sim O(h^{1/2}), \theta \sim O(h^{1/2})$
in \Cref{C:convergence-rate} yields either a linear decay rate $O(h)$ or a quadratic rate $O(\delta^2)$ in terms of the fine scale $h$ or the coarse scale $\delta$.
\item Lipschitz regularity $u \in C^{0,1}(\Oc)$, i.e. $k = 0, \ \alpha = 1$.
Choosing optimal parameters $\delta \sim O(h^{1/3}), \theta \sim O(h^{2/3})$
in \Cref{C:convergence-rate} gives us either a rate $O(h^{1/3})$ in terms of the
fine scale $h$ or a linear rate $O(\delta)$ in terms of the coarse scale $\delta$.
\end{enumerate}
We point out that, since
$|u|_{C^{0,1}(\Oc)} \lesssim |f|_{C^{1,1}(\Oc)}$ and $|u|_{C^{1,1}(\Oc)} \lesssim |f|_{C^{3,1}(\Oc)}$ under proper assumptions of $\Omega$ \cite{DeFi}, 
the right hand side of \eqref{E:convergence-rate} can be bounded with only norms of $f$.
Our error estimates are thus realistic in terms of regularity.
\end{remark}

\begin{remark}[fine scale vs regularity]
It is instructive to realize that the coarse scale $\delta$ gets
finer with increasing regularity $k+\alpha$ of $u$, whereas the angular
scale $\theta$ gets coarser. This behavior is opposite to
the error estimates in \cite[Remark 5.4]{LiNo}.
\end{remark}

\begin{remark}[alternate proof]\label{R:alternate-proof-k0}
When $k=0$, the proof of \Cref{T:error-estimate} (error estimate) can be simplified a little bit. To be more specific, we can
construct a discrete supersolution $u_h^+ \in \Vh$ of
the form
\begin{equation*}
    u_h^+ = \interp u - K_1 q_h + K_2
\end{equation*}
provided that
\begin{equation*}
K_1 = C(d,\sigma) \frac{(\delta \theta)^{\alpha}
+ h^{\alpha}}{\delta^2} |u|_{\Czeroao}, \quad
K_2 = C(d,\sigma) \delta^{\alpha}
\l( |u|_{\Czeroao} + |f|_{\Czeroao} \r).
\end{equation*}
This is due to the fact that if $0 < \dist(x_i,\pO) < d\delta$,
then invoking \eqref{E:error-contact-0} with our choice of $K_2$
implies $\Tve[u_h^+;f](x_i) \leq 0$.
\end{remark}

%------------------------------------------------------------------------------
\subsection{Non-attainment of Dirichlet condition}\label{S:non-attainment}
%------------------------------------------------------------------------------
Although we mainly focus on the case that the domain $\Omega$ is strictly convex, it is also possible to modify and extend our two-scale method to compute the convex envelope over {\it convex polytopes} $\Omega$, thus domains with piecewise linear boundary. For simplicity, we only explain the ideas in ${\mR}^2$, but higher dimensions $d>2$ can be dealt with in a similar manner.

We need additional notation. A convex polytope $\Omega$ can be described by a set $\Nv$ of vertices on its boundary; thus $\Omega = \conv (\Nv)$. We then let $\Ne = \pO \setminus \Nv$ be the set of boundary edges of $\Omega$ excluding vertices. While $u = f$ is no longer true on $\pO$ if $\Omega$ is not strictly convex, it can be shown using \cite[Corollary 17.1.5]{Rockafellar2015convex} that $u = f$ at vertices of $\Nv$, and on each edge of $\Ne$, the function $u$ is the convex envelope of $f$ restricted to that edge. One can thus show that $u$ is the viscosity solution of the following fully nonlinear obstacle problem:
\begin{equation}\label{E:pde-CE-polytope}
\left\{
\begin{array}{ll}
T[u;f](x) = 0 \quad \; & \forall x \in \Omega, \\
\min\left\{f(x) - u(x), e^T(x) D^2u(x) e(x) \right\} = 0\quad \; & \forall x \in \Ne, \\
u(x) = f(x) \quad \; & \forall x \in \Nv,
\end{array}
\right.
\end{equation}
where $e(x)$ is a unit vector parallel to the edge of $\Omega$ containing $x \in \Ne$; note that \eqref{E:pde-CE-polytope} is a modification of \eqref{E:pde-CE} on $\partial\Omega$. To discretize this system, let $\Nhv := \Nv \subset \Nhb$ and $\Nhe := \Nhb \cap \Ne$, then our discrete problem is to find $\uve \in \Vh$ satisfying
\begin{equation}\label{E:2ScOp-Ex4}
\left\{
\begin{array}{ll}
\Tve[\uve;f](x_i) = 0 \quad \; & \forall x_i \in \Nhi,\\
\min\left\{f(x_i) - \uve(x_i), \sdd \uve(x_i, e(x_i)) \right\} = 0\quad \; & \forall x_i \in \Nhe,  \\
\uve(x_i) = f(x_i) \quad \; & \forall x_i \in \Nhv,
\end{array}
\right.
\end{equation}
where the step size of $\sdd \uve(x_i, e(x_i))$ should be defined as the maximum number $\delta_i$ in
$(0,\delta]$ such that $x_i \pm \delta_i e(x_i)$ are both inside $\Oc$. The convergence of $\uve$ can be derived in a similar way to \Cref{S:TwoSc}. We now prove an error estimate.

\begin{Proposition}[convergence rate for polytopes]\label{L:conv-rate-polytopes}
Let $\Omega$ be a convex polytope and $u \in \nCkao$ with $k=0,1, \ 0<\alpha \leq 1$, and $\theta \leq 1$. Let $\uve\in\Vh$ be the discrete solution of \eqref{E:2ScOp-Ex4}. If the discretization parameters $\ve = (h,\delta,\theta)$ obey relations similar to those in \Cref{C:convergence-rate} (convergence rate), then
\[
\| u - \uve \|_{L^\infty(\Omega)} \le C(u,\Omega,d,\sigma) \, h^{\frac{(k+\alpha)^2}{2+k+\alpha}}.
\]
\end{Proposition}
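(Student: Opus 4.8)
The plan is to follow the proofs of \Cref{T:error-estimate} (error estimate) and \Cref{C:convergence-rate} (convergence rate) essentially verbatim on the interior nodes, and to repeat the same flatness--consistency--barrier machinery one dimension lower on the edge nodes $\Nhe$, where the discrete relation is the one-dimensional obstacle condition $\min\{f(x_i)-\uve(x_i),\,\sdd\uve(x_i;e(x_i))\}=0$ in place of $\Tve[\uve;f](x_i)=0$. Since $\Omega$ is a polytope, I would triangulate it exactly so that $\Oh=\Omega$, which also collapses $L^\infty(\Oh)$ and $L^\infty(\Omega)$. First I would extend \Cref{L:Monotonicity} (monotonicity) and \Cref{L:DCP} (discrete comparison principle) to the scheme \eqref{E:2ScOp-Ex4}. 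The edge operator $S[w_h](x_i):=\min\{f(x_i)-w_h(x_i),\,\sdd w_h(x_i;e(x_i))\}$ is monotone in exactly the sense of \Cref{L:Monotonicity}, because $x_i\pm\delta_i e(x_i)\in\Oc$ by the definition of the edge step $\delta_i$; hence if a difference $u_h-w_h$ attains a nonnegative maximum at an edge node the same contradiction follows, while at the vertices $\Nhv$ the Dirichlet values pin the solution. The strict-inequality reduction through $q_h$ goes through unchanged since $\sdd q_h(x_i;e(x_i))\ge1$, so the comparison principle holds with the maximum allowed anywhere on $\Nhi\cup\Nhe$.

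For the lower bound I would check that $\interp u$ is still a discrete subsolution: convexity of $u$ gives $\sdd\interp u(x_i;e(x_i))\ge\sdd u(x_i;e(x_i))\ge0$ on $\Nhe$, where also $\interp u(x_i)=u(x_i)\le f(x_i)$, and $\interp u=f$ on $\Nhv$, so the comparison principle yields $\interp u\le\uve$. For the upper bound I would look for a supersolution $u_h^+=\interp u-K_1q_h+K_2-K_3p_h-K_4\,b_h$, with $b_h$ a new vertex barrier described below. On the interior nodes the three-case analysis of \Cref{Prop:Consistency-lowreg} together with \Cref{L:Barrier_qh} and \Cref{L:Barrier_p} applies without change, since those arguments use only $\mC(f)$ and the distance to $\partial\Oh$, not strict convexity of $\Omega$; in particular \eqref{E:error-boundary} and the growth $\sdd p_h\ge C\delta_i^{k+\alpha-2}$ still control the interior boundary layer, and $b_h$ can be arranged to be discretely convex so as not to spoil these bounds. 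The genuinely new work is on the edge nodes, where $u|_E$ on the edge $E\ni x_i$ is the one-dimensional convex envelope of $f|_E$. Writing $\mC_E(f)$ for the one-dimensional contact set on $E$, which contains the two endpoint vertices, I split $\Nhe$ into three cases along the prescribed direction $e(x_i)$: when $x_i$ is far from $\mC_E(f)$, the one-dimensional form of \Cref{L:Flatness} (flatness) gives $\sdd u(x_i;e(x_i))=0$ and hence, after interpolation, a small $\sdd\interp u(x_i;e(x_i))$; when $x_i$ is within $d\delta$ of an interior point of $\mC_E(f)$, the one-dimensional tangency there yields $f(x_i)-u(x_i)\le C\delta^{k+\alpha}(|u|+|f|)$; and when $x_i$ is within $d\delta$ of a vertex one is in a one-dimensional corner layer. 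Because the edge direction is prescribed rather than discretized, the angular term $(\delta\theta)^{k+\alpha}$ is absent, so the interior constants $K_1,K_2$ are already enough to force $S[u_h^+](x_i)\le0$ in the first two cases; in the corner layer $b_h$ plays the role that $p_h$ plays near $\partial\Oh$. With $u_h^+\ge f$ at the vertices, $u_h^+$ is then a supersolution, whence $\uve\le u_h^+$, and balancing $\delta,\theta$ against $h$ as in \Cref{C:convergence-rate} yields the stated rate in $L^\infty(\Omega)$; for $d>2$ the same argument runs by induction on the boundary strata of decreasing dimension.

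The main obstacle is the corner layer near the vertices and the construction of $b_h$. One cannot simply reuse $p_h$ there: every stencil neighbor $x_i\pm\delta_i e(x_i)$ of an edge node also lies on $\pO$, so $p_h$ vanishes on the whole stencil and $\sdd p_h(x_i;e(x_i))=0$, giving no control of the one-dimensional curvature blow-up $|\sdd u(x_i;e(x_i))|\le C\delta_i^{k+\alpha-2}|u|$ as $x_i$ tends to a vertex. I would therefore build $b_h$ from the one-dimensional profile $\eta$ of \eqref{E:eta-prop-1}, with the distance along the edge to the nearest vertex in place of $\dist(\cdot,\partial\Oh)$, so that $\sdd b_h(x_i;e(x_i))\ge C\delta_i^{k+\alpha-2}$ for edge nodes in the corner layer while $-C\delta^{k+\alpha}\le b_h\le0$; then $K_4=C|u|_{\nCkao}$ closes the estimate with a contribution of the same order as the $K_3p_h$ term. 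The delicate point is to extend this edge profile to a function on all of $\Omega$ that is globally (discretely) convex, so that $-K_4b_h$ does not destroy the interior supersolution inequalities; I expect this can be done by exploiting the local cone structure of the polytope at each vertex, mirroring the supporting-hyperplane argument used for $p_h$ in \Cref{L:Barrier_p}.
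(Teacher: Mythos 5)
Your proposal is workable in outline, but it takes a genuinely different route from the paper, and its one genuinely new ingredient is left unproven. The paper never builds a global supersolution for the coupled system \eqref{E:2ScOp-Ex4}. Instead it decouples: (a) the boundary subsystem involves only edge and vertex values, and on each edge it is literally a one-dimensional two-scale discretization of the convex envelope of $f$ restricted to that edge, so \Cref{C:convergence-rate} applies one dimension lower and bounds $\max_{x_i\in\Nhb}|u(x_i)-\uve(x_i)|$; (b) an auxiliary interior problem $\wt{u}_\ve$ with exact Dirichlet data $u|_{\Nhb}$ is again handled by \Cref{C:convergence-rate}, which survives the loss of strict convexity because its boundary data is attained; and (c) the two pieces are glued by the stability estimate \eqref{E:stability}, a discrete maximum principle that follows at once from \Cref{L:DCP}, giving $\max_{\Nh}|\wt{u}_\ve-\uve|\le\max_{\Nhb}|u-\uve|$. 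This decomposition buys the paper a short proof with no new barrier: the vertex corner layer that dominates your argument never appears, since at vertices $\uve=f=u$ exactly and each edge problem is self-contained with its own Dirichlet data. Your route instead extends the comparison principle to the coupled interior--edge system and runs the supersolution construction of \Cref{T:error-estimate} globally, which forces you to invent the vertex barrier $b_h$.

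That barrier is where your proof has a gap: you correctly identify that $p_h$ vanishes on every edge stencil and that a new barrier is needed, but you only conjecture its existence ("I expect this can be done"). The gap is fillable, and your instinct about the cone structure is right: at each vertex $V$ take a supporting hyperplane meeting $\Oc$ only at $V$, with inward unit normal $n_V$, and set $b_V(x):=\eta\bigl(n_V\cdot(x-V)\bigr)$ with $\eta$ from \eqref{E:eta-prop-1}. This is convex because it is a convex profile composed with an affine function (no monotonicity of $\eta$ is needed here, unlike for $\dist(\cdot,\pO)$ in \Cref{L:Barrier_p}), it is constant outside an $O(\delta)$ neighborhood of $V$, and along each adjacent edge $n_V\cdot(x-V)=\cos\gamma\,|x-V|$ with $\cos\gamma>0$ depending only on $\Omega$, so $b:=\sum_V b_V$ delivers the edge-curvature lower bound $C\delta_i^{k+\alpha-2}$ while its interpolant stays discretely convex at interior nodes. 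A second, smaller flaw: your claim that the three-case analysis of \Cref{Prop:Consistency-lowreg} applies to interior nodes ``without change'' is not accurate, since its proof uses strict convexity through $\pO\subset\mC(f)$, both to make the cases mutually exclusive and to guarantee $\delta_i=\delta$ in Case 1 (estimate \eqref{E:error-noncontact} is stated with step $\delta$). For a polytope a node can satisfy $\dist(x_i,\mC(f))\ge d\delta$ yet lie within $\delta$ of $\pO$; you must give the boundary-layer case priority, i.e.\ handle every $x_i$ with $\dist(x_i,\pO)<d\delta$ via \eqref{E:error-boundary} and $p_h$, and apply the flatness case only to the remaining nodes, for which $\delta_i=\delta$ automatically. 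The same repair is implicitly needed when the paper asserts \Cref{C:convergence-rate} for the auxiliary problem, but your write-up states the incorrect ``without change'' explicitly.
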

\begin{proof}
We first notice that $\Oh=\Omega$ and that \Cref{L:DCP} (discrete comparison principle) implies the following stability result: if $u_h, w_h \in \Vh$ satisfy
$\Tve[u_h;f](x_i) = \Tve[w_h;f](x_i)$ for all $x_i \in \Nhi$, then
\begin{equation}\label{E:stability}
    \max_{x_i \in \Nh} \left| u_h(x_i) - w_h(x_i) \right|
    \leq \max_{x_i \in \Nhb} \left| u_h(x_i) - w_h(x_i) \right|.
\end{equation}
We consider an auxiliary discrete problem: seek $\wt{u}_{\ve} \in \Vh$ that solves
\begin{equation*}
\left\{
\begin{array}{ll}
\Tve[\wt{u}_{\ve};f](x_i) = 0 \quad \; & \forall x_i \in \Nhi,\\
\wt{u}_{\ve}(x_i) = u(x_i) \quad \; & \forall x_i \in \Nhb.
\end{array}
\right.
\end{equation*}
We observe that \Cref{C:convergence-rate} 
still holds for $\wt{u}_{\ve}$, without the strict convexity assumption on $\Omega$, because the Dirichlet boundary is attained. Therefore, choosing $\delta$ and $\theta$ as in \Cref{C:convergence-rate}, we obtain
\begin{equation*}
    \Vert u - \wt{u}_{\ve} \Vert_{L^{\infty}(\Oh)} 
    \leq C(u,\Omega,d,\sigma) \, h^{\frac{(k+\alpha)^2}{2+k+\alpha}}.
\end{equation*}
It remains to estimate $\Vert \wt{u}_{\ve} - \uve \Vert_{L^{\infty}(\Oh)}$, for which
we resort to \eqref{E:stability} because both $\wt{u}_{\ve},\uve\in\Vh$.
Since the boundary subsystem
\begin{equation*}
\left\{
\begin{array}{ll}
\min\left\{f(x_i) - \uve(x_i), \sdd \uve(x_i, e(x_i)) \right\} = 0\quad \; & \forall x_i \in \Nhe,  \\
\uve(x_i) = f(x_i) \quad \; & \forall x_i \in \Nhv,
\end{array}
\right.
\end{equation*}
can be viewed as several one dimensional two-scale discretizations of the convex envelope problem, \Cref{C:convergence-rate} again implies
\begin{equation*}
\max_{x_i \in \Nhb} \left| \wt{u}_{\ve}(x_i) - \uve(x_i) \right| = 
\max_{x_i \in \Nhb} \left| u(x_i) - \uve(x_i) \right|
\leq C(u,\Omega,d,\sigma) \, h^{\frac{(k+\alpha)^2}{2+k+\alpha}}.
\end{equation*}
This concludes the proof.
\end{proof}

It is worth pointing out that we may not need a two-scale structure on the boundary since it reduces to a one dimensional problem on the edge of a polytope in 2D. However, notice that this procedure extends to dimensions $d>2$, and in such case boundary subproblems
possess dimension higher than one and require a two-scale structure.

%------------------------------------------------------------------------------
\section{Modified Wide Stencil Method}\label{S:modified-wide-stencil}
%------------------------------------------------------------------------------
Our numerical analysis of the previous sections
could be applied to derive error estimates for a modified
wide stencil method obtained upon adding a two-scale structure into that of \cite{Ob2}. 
Since key ideas and techniques are identical to those for the two-scale method, 
we present them without proofs. 
First let us briefly introduce the wide stencil method in a way convenient to our analysis; we refer the readers to \cite{Ob2} and \cite{ObRu} for more details.

For a strictly convex domain $\Omega \subset \mRd$, with abuse
of notations, let $\Nhi := \Omega \cap h\mZd$ be a Cartesian grid
in $\Omega$, and $\Vh$ be the space consisting of all maps
$u_h: \Nhi \cup \pO \rightarrow \mR$. Let a coarse scale
$\delta \ge \sqrt{d}{h}$ be used to define the set of discrete directions
\[
D_{\ve} := \l\{ x \in h\mZd: \dist\big(x, \partial B(0,\delta) \big) \le \frac{\sqrt{d}}{2}h \r\},
\]
where $\ve := (h,\delta)$ and $B(0,\delta)$ is the ball centered
at the origin with radius $\delta$. It is worth pointing out that $D_{\ve}$ is just a few layers of grid points, and thus its cardinality satisfies $\#D_{\ve} \lesssim \left(\frac{\delta}{h}\right)^{d-1}$.
The following lemma is similar to \cite[Lemma 4.4]{DolzWalk} and characterizes the consistency error
due to using $D_{\ve}$ instead of $\partial B(0,\delta)$.
 
\begin{Lemma}[properties of $D_{\ve}$]\label{L:consist-Dve}
For any $v \in \partial B(0,\delta)$, there exists $v_{\ve} \in D_{\ve}$ such that the angle between the vectors $v$ and $v_{\ve}$
is bounded by $\frac{\sqrt{d}\pi h}{4\delta}$. Moreover, $\frac{\delta}{2}\le |v| \le \frac{3\delta}{2}$ for all $v\in D_\ve$.
\end{Lemma}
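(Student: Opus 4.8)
The plan is to treat the two assertions separately, disposing first of the bound on $|v|$ for $v \in D_{\ve}$, which is immediate, and then establishing the angle bound, which carries the real content. For the \textbf{moreover part}, I would use that the distance from any point $v$ to the sphere $\partial B(0,\delta)$ equals $\big||v| - \delta\big|$. Hence membership $v \in D_{\ve}$ forces $\big||v|-\delta\big| \le \frac{\sqrt d}{2}h$, that is, $\delta - \frac{\sqrt d}{2}h \le |v| \le \delta + \frac{\sqrt d}{2}h$. Invoking the standing hypothesis $\delta \ge \sqrt d\, h$, which gives $\frac{\sqrt d}{2}h \le \frac{\delta}{2}$, both endpoints collapse to the asserted $\frac{\delta}{2} \le |v| \le \frac{3\delta}{2}$.

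For the angle bound I would first produce a suitable candidate $v_{\ve}\in D_{\ve}$. Given $v \in \partial B(0,\delta)$, take $v_{\ve}$ to be a nearest point of the lattice $h\mZd$ to $v$. Since $h\mZd$ has covering radius $\frac{\sqrt d}{2}h$ (the half space-diagonal of a cube of side $h$), this yields $|v - v_{\ve}| \le \frac{\sqrt d}{2}h$. Because $v$ itself lies on $\partial B(0,\delta)$, we then get $\dist(v_{\ve}, \partial B(0,\delta)) \le |v_{\ve} - v| \le \frac{\sqrt d}{2}h$, so indeed $v_{\ve} \in D_{\ve}$. Moreover the moreover part gives $|v_{\ve}| \ge \frac{\delta}{2} > 0$, so $v_{\ve}\neq 0$ and the angle $\gamma$ between $v$ and $v_{\ve}$ is well defined.

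The quantitative heart of the argument is to convert the short displacement $|v - v_{\ve}|$ into an angle without losing constants. I would apply the law of sines in the triangle with vertices at the origin $O$, at $v$, and at $v_{\ve}$: the side opposite the angle $\gamma$ at $O$ is $|v - v_{\ve}|$, while the side opposite the angle at $v_{\ve}$ is $|v| = \delta$. This gives $\sin\gamma = \frac{|v - v_{\ve}|}{\delta}\,\sin(\text{angle at } v_{\ve}) \le \frac{|v - v_{\ve}|}{\delta} \le \frac{\sqrt d\, h}{2\delta}$. The inner product $v\cdot v_{\ve} \ge \delta\big(\delta - \frac{\sqrt d}{2}h\big) > 0$ shows $\gamma < \frac{\pi}{2}$, so Jordan's inequality $\gamma \le \frac{\pi}{2}\sin\gamma$ applies and upgrades the sine estimate to $\gamma \le \frac{\pi}{2}\cdot\frac{\sqrt d\, h}{2\delta} = \frac{\sqrt d\, \pi h}{4\delta}$, which is exactly the claim.

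The main obstacle, and really the only delicate point, is not losing the factor of two in the angular estimate. A naive route — bounding the perpendicular distance by the full displacement $|v - v_{\ve}|$ and dividing by the crude lower bound $|v_{\ve}| \ge \frac{\delta}{2}$ — produces only $\sin\gamma \le \frac{\sqrt d\, h}{\delta}$ and hence the weaker $\gamma \le \frac{\sqrt d\, \pi h}{2\delta}$. The law-of-sines formulation, in which $\delta$ appears directly in the denominator, is what delivers the sharp constant $\frac14$; the remaining steps are routine.
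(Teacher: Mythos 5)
Your proof is correct and follows essentially the same route as the paper: nearest lattice point with covering radius $\frac{\sqrt d}{2}h$, the bound $\sin\gamma \le \frac{|v-v_\ve|}{\delta}$ (with $\delta = |v|$ in the denominator, which preserves the sharp constant), and Jordan's inequality $\gamma \le \frac{\pi}{2}\sin\gamma$, plus the same endpoint computation for the moreover part. The only difference is that you spell out details the paper leaves implicit — the law-of-sines justification of the sine bound and the verification via $v\cdot v_\ve > 0$ that the angle is acute before invoking Jordan's inequality.
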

\begin{proof}
Choose a Cartesian grid point in $v_{\ve} \in h\mZd$ closest to $v$, which in turn must satisfy $|v - v_{\ve}| \le \frac{\sqrt{d}h}{2}$, whence $v_{\ve} \in D_{\ve}$. The angle $\theta$ between $v$ and $v_{\ve}$ is dictated by \looseness=-1
\[
\sin\theta \le \frac{|v - v_{\ve}|}{\delta} \le \frac{\sqrt{d}}{2}\frac{h}{\delta}.
\]
This implies $\theta \leq \frac{\pi}{2}\sin\theta \le \frac{\sqrt{d}\pi h}{4\delta}$.
Moreover, by definition of $D_\ve$ we see that
$\frac{\delta}{2} \le \delta - \frac{\sqrt{d}}{2}h \le |v| \le
\delta + \frac{\sqrt{d}}{2}h \le \frac{3\delta}{2}$ for all $v\in D_\ve$.
\end{proof}

For any function $w \in \Vh$ and any vector $v \in D_{\ve}$,
let the centered second difference operator at any $x_i \in \Nhi$ in the direction $v$ be
\begin{equation*}
    \nabla^2_\ve w(x_i;v) := \frac{2}{\l(\rho_{+} + \rho_{-}\r)|v|^2}
    \l( \frac{w(x_i + \rho_{+}v) - w(x_i)}{\rho_{+}} + \frac{w(x_i - \rho_{-}v) - w(x_i)}{\rho_{-}} \r),
\end{equation*}
where $\rho_{\pm}$ are the biggest numbers in $(0, 1]$ such that $x_i \pm \rho_{\pm} v \in \Oc$. Notice that this is well-defined for any $w \in \Vh$ because $x_i \pm \rho_{\pm} v$ are either in $\Nhi$ or on the boundary $\pO$. Since for any
$v \in D_{\ve}$ we have $\frac{\delta}{2} \le |v| \le \frac{3\delta}{2}$, the parameter
$\delta$ plays a role similar to the coarse scale $\delta$ 
for second differences in our two-scale method.
The cardinalities
  $\#D_{\ve} \approx (\delta/h)^{d-1}$ and $\#\St \approx \theta^{-(d-1)}$ are consistent
provided $\theta \approx h/\delta$.

We define the discrete operator for the modified wide stencil method to be
\begin{equation*}
    \Tve[w;f](x_i) := \min\l\{f(x_i) - w(x_i),
    \min_{v \in D_{\ve}} \nabla^2_\ve w(x_i;v) \r\} \quad\forall \,  x_i \in \Nhi
\end{equation*}
for any $w \in \Vh$. Finally, the discrete problem reads: find $\uve \in \Vh$
such that
\begin{equation}\label{E:2ScOp-WD}
    \Tve[\uve;f](x_i) = 0 \quad\forall \,  x_i \in \Nhi,
\end{equation}
and $\uve(x) = f(x)$ for any $x \in \pO$.
It is now easy to check that \Cref{L:DCP} (discrete comparison principle) and
\Cref{Prop:Consistency-lowreg} (consistency for $u$ with H\"older regularity) are
valid verbatim in the present context, except that instead of \eqref{E:error-noncontact}
we now have
\[
\min_{v \in D_\ve} \nabla^2_\ve w(x_i;v) \le C(d,\sigma) \frac{h^{k+\alpha}}{\delta^2}
  |u|_{C^{k,\alpha}(B_i)}.
\]
In fact, the modified wide stencil method can be viewed as a modified version of two-scale method without interpolation error and $\theta \approx h/\delta$.

The following error estimate mimics that in \Cref{S:RatesHolder}. It is
a consequence of the discrete comparison principle and consistency for the
wide stencil method together with
the discrete barrier functions of \Cref{S:DBarrier}. We omit its proof.

\begin{Theorem}[error estimate for the wide stencil method]\label{T:error-estimate-WD}
Let $\Omega$ be strictly convex.
Let $u$ be the viscosity solution of \eqref{E:pde-CE} and $\uve$ be the discrete
solution of \eqref{E:2ScOp-WD}. If $u \in \nCkao$ for $k=0,1$
and $0<\alpha \leq 1$, then the following error estimate holds
\begin{equation*}\label{E:error-estimate-WD}
\left| u(x_i) - \uve(x_i) \right|
\leq C \left( |u|_{\nCkao} \frac{h^{k+\alpha}+ \delta^{2+k+\alpha}}{\delta^2}
+ |f|_{\nCkao} \delta^{k+\alpha} \right) \quad \forall x_i \in \Nhi,
\end{equation*}
with $C = C(\Omega,d,\sigma)$. If $\delta :=
|u|_{\nCkao}^{\frac{1}{2+k+\alpha}}
\Big(|u|_{\nCkao} + |f|_{\nCkao} \Big)^{-\frac{1}{2+k+\alpha}} h^{\frac{k+\alpha}{2+k+\alpha}}$,
we thus obtain the convergence rate
\begin{equation*}\label{E:convergence-rate-WD}
\left| u(x_i) - \uve(x_i) \right| \leq C(\Omega,d,\sigma)
    \Big( |u|_{\nCkao} + |f|_{\nCkao} \Big) \; h^{\frac{(k+\alpha)^2}{2+k+\alpha}} \quad \forall x_i \in \Nhi.
\end{equation*}
\end{Theorem}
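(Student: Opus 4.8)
The plan is to follow verbatim the architecture of the proof of \Cref{T:error-estimate} (error estimate), replacing the symmetric operator $\sdd$ by the asymmetric wide-stencil operator $\nabla^2_\ve$ and exploiting that the space $\Vh$ now consists of grid functions, so that the Lagrange interpolation error is absent and the comparison is made directly with the restriction of $u$ to the nodes $\Nhi\cup\pO$ (this is why the estimate is stated for $|u(x_i)-\uve(x_i)|$ rather than for $|\interp u-\uve|$). For the lower bound I would first verify that this restriction is a discrete subsolution of \eqref{E:2ScOp-WD}: since $u$ is convex, the divided-difference identity underlying $\nabla^2_\ve$ gives $\nabla^2_\ve u(x_i;v)\ge 0$ for every $v\in D_\ve$ (convexity is preserved even though the two step lengths $\rho_\pm$ may differ), while $u(x_i)\le f(x_i)$ yields $\Tve[u;f](x_i)\ge 0$; \Cref{L:DCP} (discrete comparison principle) then forces $u\le\uve$ at every node.

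For the upper bound I would construct the discrete supersolution
\[
u_h^+ = u - K_1 q_h + K_2 - K_3 p_h,
\]
with $q_h,p_h$ the barriers of \Cref{S:DBarrier} restricted to the grid and with $K_1,K_2,K_3>0$ chosen exactly as in \Cref{T:error-estimate}, except that $K_1$ now omits the $(\delta\theta)^{k+\alpha}$ contribution since the modified consistency bound reads $\min_{v\in D_\ve}\nabla^2_\ve u(x_i;v)\le C(d,\sigma)\frac{h^{k+\alpha}}{\delta^2}|u|_{\nCka(B_i)}$. I would then split $\Nhi$ into the same three regions as in \Cref{Prop:Consistency-lowreg}: when $\dist(x_i,\mC(f))\ge d\delta$, the modified noncontact estimate together with $\nabla^2_\ve q_h(x_i;v)=1$ (which holds with equality because $q$ is quadratic, so its asymmetric second difference reproduces the exact second derivative) makes $\min_{v\in D_\ve}\nabla^2_\ve u_h^+(x_i;v)\le 0$ for $K_1=C\frac{h^{k+\alpha}}{\delta^2}|u|_{\nCkao}$; when $\dist(x_i,\mC(f))<d\delta$ and $\dist(x_i,\pO)\ge d\delta$, the contact estimates \eqref{E:error-contact-0}--\eqref{E:error-contact-1} make $f(x_i)-u_h^+(x_i)\le 0$ for $K_2=C\delta^{k+\alpha}(|u|_{\nCkao}+|f|_{\nCkao})$; and when $\dist(x_i,\pO)<d\delta$, the boundary estimate \eqref{E:error-boundary} together with property \eqref{E:Barrier_ph-1} of $p_h$ makes $\min_{v\in D_\ve}\nabla^2_\ve u_h^+(x_i;v)\le 0$ for $K_3=C|u|_{\nCkao}$. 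In each case the remaining term of $\Tve$ is controlled by convexity of the barriers (the analogues of \eqref{E:Barrier_qh-1} and \eqref{E:Barrier_ph-2}), so $u_h^+$ is a discrete supersolution and \Cref{L:DCP} gives $\uve\le u_h^+$. Inserting the bounds $-C\le q_h\le 0$ and $-C\delta^{k+\alpha}\le p_h\le 0$ from \eqref{E:Barrier_qh-2} and \eqref{E:Barrier_ph-3}, and combining with the lower bound, yields the claimed pointwise estimate; optimizing $\delta$ (with $\theta\approx h/\delta$ dictated by \Cref{L:consist-Dve}) exactly as in \Cref{C:convergence-rate} (convergence rate) then produces the stated rate.

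The main obstacle I anticipate is not the case analysis, which transcribes directly, but re-establishing the two barrier properties for the asymmetric, boundary-truncated operator $\nabla^2_\ve$ rather than the symmetric $\sdd$. For $q_h$ this is harmless, since a direct Taylor expansion shows $\nabla^2_\ve q(x_i;v)=1$ independently of the (possibly unequal) steps $\rho_\pm$ and of $|v|$. For $p_h$ one must revisit the supporting-hyperplane and Taylor argument of \Cref{L:Barrier_p} (discrete barrier $p_h$): near $\pO$ the step lengths $\rho_\pm|v|$ may be shortened below $\delta$ in order to keep $x_i\pm\rho_\pm v\in\Oc$, and one has to check that the lower bound $\max_{v\in D_\ve}\nabla^2_\ve p(x_i;v)\ge C\delta_i^{k+\alpha-2}$ survives this truncation. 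Selecting a direction $v\in D_\ve$ nearly aligned with the inward normal $\nu$ (whose existence is guaranteed by \Cref{L:consist-Dve}) and invoking the convexity and monotonicity of $\eta$ should suffice, in parallel with the proof of \eqref{E:Barrier_ph-1}.
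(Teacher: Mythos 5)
Your proposal is correct and coincides with the proof the paper has in mind: the paper omits the argument precisely because it is the combination of the discrete comparison principle, the modified noncontact consistency bound, and the barriers $q_h$, $p_h$ that you spell out, including the key observations that the asymmetric second difference of a quadratic is exact (so $\nabla^2_\ve q_h(x_i;v)=1$ regardless of unequal steps $\rho_\pm$) and that convexity of $u$ still yields $\nabla^2_\ve u(x_i;v)\ge 0$. Your flagged concern about re-establishing the analogue of \eqref{E:Barrier_ph-1} for the boundary-truncated asymmetric operator is exactly the one nontrivial verification the paper glosses over with "valid verbatim," and your plan of choosing $v\in D_\ve$ nearly aligned with the inward normal via \Cref{L:consist-Dve} and using monotonicity and convexity of $\eta$ is the right way to settle it.
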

  
We point out that \Cref{R:two-scenarios} (two important scenarios) applies in this context. In particular, the convergence rate is of order $O(h)$ provided $\delta = O(h^{1/2})$ for functions $u \in C^{1,1}(\Oc)$.

%%%%%%%%%%%%%%%%%%%%%%%%%%%%%%%%%%%%%%%%%%%%%%%%%%%%%%%%%%%%%%%%%%%%%%%%%%%%%%%%%
\section{Numerical Experiments}\label{S:Exp}
%%%%%%%%%%%%%%%%%%%%%%%%%%%%%%%%%%%%%%%%%%%%%%%%%%%%%%%%%%%%%%%%%%%%%%%%%%%%%%%%%

To solve the discrete system \eqref{E:2ScOp}, we use
Howard's algorithm which converges superlinearly.
We implemented the 2-scale method within MATLAB, using
some of the routines provided by the software FELICITY \cite{Walker1, Walker2}.

%--------------------------------------------------------------------------------
\subsection{Howard's Algorithm}\label{S:Howard-Algorithm}
%--------------------------------------------------------------------------------
For convenience, let us order the nodes in
$\Nh = \{ x_1, \ldots, x_N\}$ with $x_i \in \Nhi$
for $1 \leq i \leq N_0$ and $x_i \in \Nhb$ for
$N_0+1 \leq i \leq N$; thus $N, N_0$ and $N_b := N-N_0$
are the cardinality of $\Nh, \Nhi$ and $\Nhb$ respectively.
In addition, let
$\bu := (u_h(x_i))_{i=1}^N \in \mRN$ stand for the vector of
nodal values of a generic $u_h \in \Vh$, and
$\St = \l\{v_1,\ldots,v_{S} \r\}$, where $S$ is
the cardinality of $\St$. In view of the expression \eqref{E:disc-oper} for the discrete operator $\Tve$, the discrete system \eqref{E:2ScOp} reads
\begin{equation}\label{E:Howard-disc-system}
\sup_{\ba \in \mA}
\l( B^{\ba} \bu - F^{\ba} \r) = \bm{0},
\end{equation}
where $\mA = \l\{(\alpha_1,\ldots,\alpha_{N_0}): \alpha_i \in \{j\}_{j=0}^S \r\}$,
matrix $B^{\ba}\in\mathbb{R}^{N\times N}$ satisfies
\begin{equation*}
\l( B^{\ba} \bu \r)_i = \l\{
\begin{array}{ll}
u_h(x_i) \quad & i \geq N_0+1,\; 0 \le \alpha_i \le S \\
u_h(x_i) \quad & 1 \leq i \leq N_0,\;\alpha_i = 0,  \\
-\sdd u_h(x_i;v_{\alpha_i}) \quad & 1 \leq i \leq N_0,\;1 \le \alpha_i \le S,
\end{array}
\r.
\end{equation*}
and $F^{\ba}$ is given by
\begin{equation*}
\l( F^{\ba} \r)_i = \l\{
\begin{array}{ll}
f(x_i) \quad & i \geq N_0+1, \;0 \le \alpha_i \le S \\
f(x_i) \quad & 1 \leq i \leq N_0,\;\alpha_i = 0,  \\
0 \quad & 1 \leq i \leq N_0,\; 1 \le \alpha_i \le S.
\end{array}
\r.
\end{equation*}
We solve \eqref{E:Howard-disc-system} via the Howard's algorithm \cite{BoMaZi}, which is a semi-smooth Newton method \cite{BoMaZi,HIK:2002,SmearsSuli2014,Ulbrich:2011} also known as policy iteration in the financial literature \cite{PutermanBrumelle1979}:
\begin{algorithm}
  \caption{(Howard's Algorithm)
    \label{alg:Howard}}
  \begin{algorithmic}[1]
    \State Select an arbitrary initial $\ba_0 \in \mA$, and let $n=0$.
    \While{}
        \State Let $\bu_{n}$ be the solution of the linear equations
        $B^{\ba_n} \bu_{n} - F^{\ba_n} = \bm{0}$.
        \State Let $\ba_{n+1} = \argmax_{\ba \in \mA} \l( B^{\ba} \bu_{n} - F^{\ba} \r)$.
        \State If $\ba_{n+1} = \ba_n$, stop; else $n = n+1$.
    \EndWhile
  \end{algorithmic}
\end{algorithm}

\noindent
Hereafter, the vector equality in \eqref{E:Howard-disc-system} and inequalities $\ge$ later are understood componentwise.
We could immediately see from the above
that for any $\ba \in \mA$,
we have $\l(B^{\alpha}\r)_{ii} > 0$
and $\l(B^{\alpha}\r)_{ij} \leq 0$ for $i \neq j$.
In fact, we prove that $B^{\ba}$ is an M-matrix.

\begin{Lemma}[M-matrix property]\label{L:M-Matrix}
For any $\ba \in \mA$, $B^{\ba}$ is an M-matrix.
\end{Lemma}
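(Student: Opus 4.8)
The plan is to invoke the classical vector characterization of M-matrices: a $Z$-matrix (positive diagonal, nonpositive off-diagonal entries) is a nonsingular M-matrix if and only if there exists a strictly positive vector $\bw$ with $B^{\ba}\bw > \bm{0}$ componentwise. The $Z$-matrix structure of $B^{\ba}$, namely $(B^{\ba})_{ii}>0$ and $(B^{\ba})_{ij}\le 0$ for $i\ne j$, has already been recorded just above the statement. Hence the whole task reduces to exhibiting a single test vector $\bw>\bm{0}$, independent of the policy $\ba$, for which $B^{\ba}\bw$ is strictly positive in every component; the key point is that such a $\bw$ is provided by the discrete barrier $q_h$ of \Cref{L:Barrier_qh}.

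First I would take $q_h=\interp q$ and set $\bw:=\big(-q_h(x_i)\big)_{i=1}^N$, so that the associated piecewise linear function is exactly $-q_h\in\Vh$. To guarantee strict positivity of $\bw$ at every node, including boundary nodes where $q$ could vanish, I would enlarge the radius slightly, choosing $R>\textrm{diam}(\Omega)$; then $|x_i-x_0|<R$ gives $\bw_i=-q(x_i)>0$ for all $x_i\in\Nh$, while the quadratic nature of $q$ still yields $\sdd q_h(x_i;v)\ge 1$ for every interior node and every direction, exactly as in \Cref{L:Barrier_qh}. It then remains to test $B^{\ba}\bw>\bm{0}$ against the three branches in the definition of $B^{\ba}$, using linearity of $\sdd$. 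For an interior node $x_i$ with $\alpha_i\ge 1$ one has $(B^{\ba}\bw)_i=-\sdd(-q_h)(x_i;v_{\alpha_i})=\sdd q_h(x_i;v_{\alpha_i})\ge 1>0$; for an interior node with $\alpha_i=0$, as well as for any boundary node, $(B^{\ba}\bw)_i=-q_h(x_i)=\bw_i>0$. Since the same $\bw$ serves every $\ba\in\mA$, the vector condition holds and the M-matrix property follows.

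The only genuinely delicate point is uniformity: the test vector $\bw$ must be independent of the policy $\ba$ and simultaneously strictly positive at interior and boundary nodes, and $B^{\ba}\bw$ must be strictly positive in all three branches at once. Both are handled cleanly by $q_h$, whose defining inequality $\sdd q_h\ge 1$ holds for all directions simultaneously and whose strict negativity on $\overline{\Omega}$ is secured by the mild enlargement $R>\textrm{diam}(\Omega)$; the obstacle branch ($\alpha_i=0$) and the boundary rows contribute the positive nodal values $\bw_i$ themselves. I expect no substantive difficulty beyond correctly bookkeeping these cases and ensuring the strict (rather than merely nonnegative) positivity needed for the nonsingular M-matrix characterization.
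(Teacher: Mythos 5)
Your proof is correct, but it follows a genuinely different route from the paper's. The paper proves the M-matrix property via the \emph{monotonicity} (inverse-positivity) characterization: it reduces the claim to showing that $B^{\ba}\bu \geq \bm{0}$ implies $\bu \geq \bm{0}$, and deduces this directly from \Cref{L:DCP} (discrete comparison principle), since $B^{\ba}\bu \ge B^{\ba}\bw$ forces $u_h \ge w_h$ for the corresponding functions in $\Vh$. You instead use the \emph{semipositivity} characterization of Z-matrices (existence of $\bw>\bm{0}$ with $B^{\ba}\bw>\bm{0}$) and exhibit the test vector explicitly via the barrier $q_h$ of \Cref{L:Barrier_qh}, with the harmless enlargement $R>\textrm{diam}(\Omega)$ to secure strict positivity at boundary nodes (in fact, since $x_0\in\Omega$ is interior, $|x_i-x_0|<\textrm{diam}(\Omega)$ already holds, but your modification makes this point transparent); the case check over the three branches of $B^{\ba}$ is complete, and the key facts you use — the Z-matrix structure recorded above the lemma, $\sdd q_h(x_i;v)\ge 1$ by convexity of $q$ and exactness of interpolation at nodes, and linearity of $\sdd$ — are all sound. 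What each approach buys: the paper's argument is shorter because it recycles the already-established nonlinear comparison principle, whereas yours is self-contained at the linear-algebra level, independent of \Cref{L:DCP}, and in effect shows that $B^{\ba}D$ is strictly diagonally dominant for the diagonal scaling $D=\textrm{diag}(\bw)$ — a quantitative structure that is also useful for conditioning and convergence arguments for Howard's algorithm; moreover your single vector $\bw$ works uniformly in $\ba$, which mirrors how the paper uses $q_h$ uniformly in the proof of \Cref{L:DCP} itself.
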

\begin{proof}
We only need to prove $B^{\ba} \bu \geq \bm{0}$
implies $\bu \geq \bm{0}$. 
Given two vectors $\bu,\bw \in \mR^{N}$ so that $B^{\ba} \bu \ge B^{\ba} \bw$ for all $\ba \in \mA$, we deduce $u_h \geq w_h$ for the corresponding functions $u_h,w_h\in\Vh$ in view of \Cref{L:DCP} (discrete comparison principle). This immediately implies $\bu\ge\bw$, and, upon taking $\bw=\bm{0}$, that $\bu\ge\bm{0}$ as desired.
\end{proof}

Invoking the fact that $B^{\ba}$ is an M-matrix
and applying \cite[Theorem 2.1]{BoMaZi}, we deduce that the $n$-th iterate
$\bu_n$ of Howard's algorithm converges monotonically
and superlinearly to $\uve$ as $n \to \infty$. The latter follows from the semi-smooth
Newton structure of Algorithm \ref{alg:Howard}. The former is a consequence of its
step 4 because
\[
B^{\ba_{n+1}} \bu_{n} - F^{\ba_{n+1}} \ge 
B^{\ba_{n}} \bu_{n} - F^{\ba_{n}} = \bm{0} = B^{\ba_{n+1}} \bu_{n+1} - F^{\ba_{n+1}} ,
\]
whence $\bu_{n+1} \le \bu_n$. Moreover, \cite[Theorem 2.1]{BoMaZi}
automatically gives existence and uniqueness
of our discrete system \eqref{E:2ScOp},
which we also proved in \Cref{L:Exist-Uniq-Stab} (existence, uniqueness and stability).
In practice, when
$\Vert \sup_{\ba \in \mA} \l( B^{\ba} \bu_n - F^{\ba} \r) \Vert_2$ is sufficiently small we can stop Algorithm \ref{alg:Howard}; we thus use the criterion
\begin{equation*}
\Vert \Tve[u_n;f] \Vert_{L^2(\Omega)} \leq 10^{-10} \Vert \Tve[f;f] \Vert_{L^2(\Omega)}
\end{equation*}
in all numerical experiments below.

%-------------------------------------------------------------------------------
\subsection{Accuracy}\label{S:Accuracy}
%-------------------------------------------------------------------------------
We now present several examples to examine the performance of
the two-scale method \eqref{E:2ScOp} for the convex envelope problem.
We choose $\delta = C_{\delta} h^{\alpha}$ and
$\theta = C_{\theta} h^{\beta}$ for different
$C_{\delta}, \alpha, C_{\theta}, \beta > 0$ in our experiments, and compare the computational rates with our theoretical rate of \Cref{C:convergence-rate} (convergence rate).

\begin{example}[full regularity $u \in C^{1,1}(\Oc)$] \label{Ex:ex1}
Let $\Omega = \{x\in \mathbb{R}^2: |x|<1 \}$ be the unit circle and
$f(\bm{x}) =  \cos(2 \pi |\bm{x}|)$. Then the convex envelope $u$
is given by
\begin{equation*}
u(x) = \left\{
\begin{array}{ll}
0 \;, &  \text{if} \quad |x| \leq 0.5 \\
\cos \left(2 \pi |x| \right) \;,
& \text{if} \quad 0.5 < |x| \leq \alpha_{*} \\
\cos \left(2 \pi \alpha_* \right) -
2\pi \sin \left(2 \pi \alpha_* \right) \left( |x| - \alpha_* \right) \;,
& \text{if} \quad \alpha_{*} < |x| \leq 1,
\end{array}
\right.
\end{equation*}
where the constant $\alpha_{*} \approx 0.6290$ satisfies the equation
\begin{equation*}
\cos \left(2 \pi \alpha_* \right) -
2\pi \sin \left(2 \pi \alpha_* \right) \left( 1 - \alpha_* \right) = 1.
\end{equation*}
The contact set $\mC(f)$ consists of two disjoint sets $\{\frac12\le|x|\le\alpha_*\}$ and
$\partial\Omega$.

In this example we have $f$ smooth and $u \in C^{1,1}(\Oc)$ (full regularity).
Upon choosing $\delta = 0.5 h^{1/2}$ and $\theta\approx 0.25 h^{1/2}$ we obtain computationally a linear convergence rate with respect to $h$, thus consistent
with \Cref{C:convergence-rate} (convergence rate), and report it
in \Cref{Table:Ex1} and Figure \ref{F:Ex1}. Plots of $\uve$ and $f$ are shown in \Cref{F:Ex1-function-plot} and slices of these functions on $\{(x,0): x \ge 0\}$ are depicted in \Cref{F:Ex1} (left). In \Cref{F:Ex1} (right), we also display the $L^{\infty}$ error vs meshsize $h$ for several choices $\delta = O(h^{\alpha})$ with different values of $\alpha$ together with $\theta \approx 0.25h^{1/2}$. The convergence rate for $\delta = O(h^{2/3})$ is better than the one predicted in \Cref{C:convergence-rate}, but other rates are consistent with our theory. We choose $\theta$ to be small enough to make the error induced by $\theta$ small relative to those of $\delta$ and $h$. In fact, we can see from \Cref{F:Ex1} (right) that the effect of changing from $\theta \approx 0.25 h^{1/2}$ to $\theta \approx h^{1/2}$ is relatively small, and thus conclude that $\theta$ is not a sensitive parameter.
\begin{table}[h!]
\begin{center}
\begin{tabular}[t]{ | l | l | c | c |}
\hline
Degrees of freedom    & Number of directions & $L^{\infty}-$error  & Iteration steps \\
\hline\hline
$N = 1557$, $h=2^{-4}$	 &	  \qquad\quad $S = 26$         	&  $3.769 \times 10^{-2}$			& 6  \\
\hline
$N = 6317$, $h=2^{-5}$	 &	  \qquad\quad $S = 36$         	&  $1.887 \times 10^{-2}$			& 10  \\
\hline
$N = 25469$,  $h=2^{-6}$	  &   \qquad\quad $S = 51$	 & 	$9.617 \times 10^{-3}$			&  11 \\
\hline
$N = 102445$,  $h=2^{-7}$	&	\qquad\quad $S = 72$	           & 	$4.801 \times 10^{-3}$			&  11 \\
\hline
$N = 410793$,  $h=2^{-8} $  	&	\qquad\quad $S = 101$	           & 	$2.400 \times 10^{-3}$			&   11  \\
\hline
\end{tabular}
\end{center}
\vskip0.2cm
\caption{\small \Cref{Ex:ex1}: $\delta = 0.5h^{1/2},\theta \approx 0.25 h^{1/2}$.
The convergence rate is about linear
(see \Cref{F:Ex1}), thus consistent with \Cref{C:convergence-rate}.
The number of search directions $S$ scales like $S\approx\theta^{-1}\approx h^{-1/2}$,
whereas the number of Howard's steps is relatively uniform.}
\label{Table:Ex1}
\end{table}
\begin{figure}[!htb]
\includegraphics[width=0.48\linewidth]{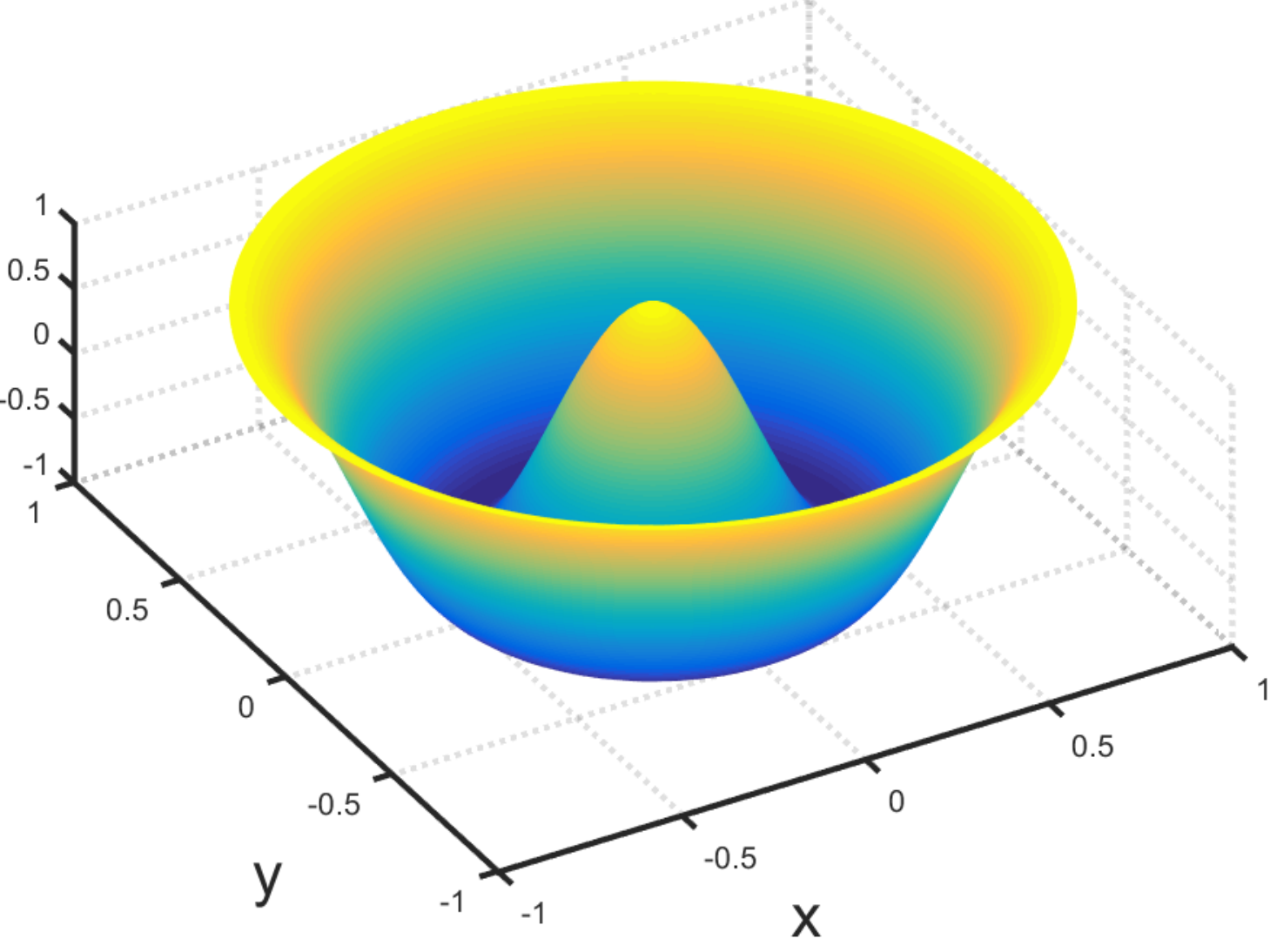}
\includegraphics[width=0.48\linewidth]{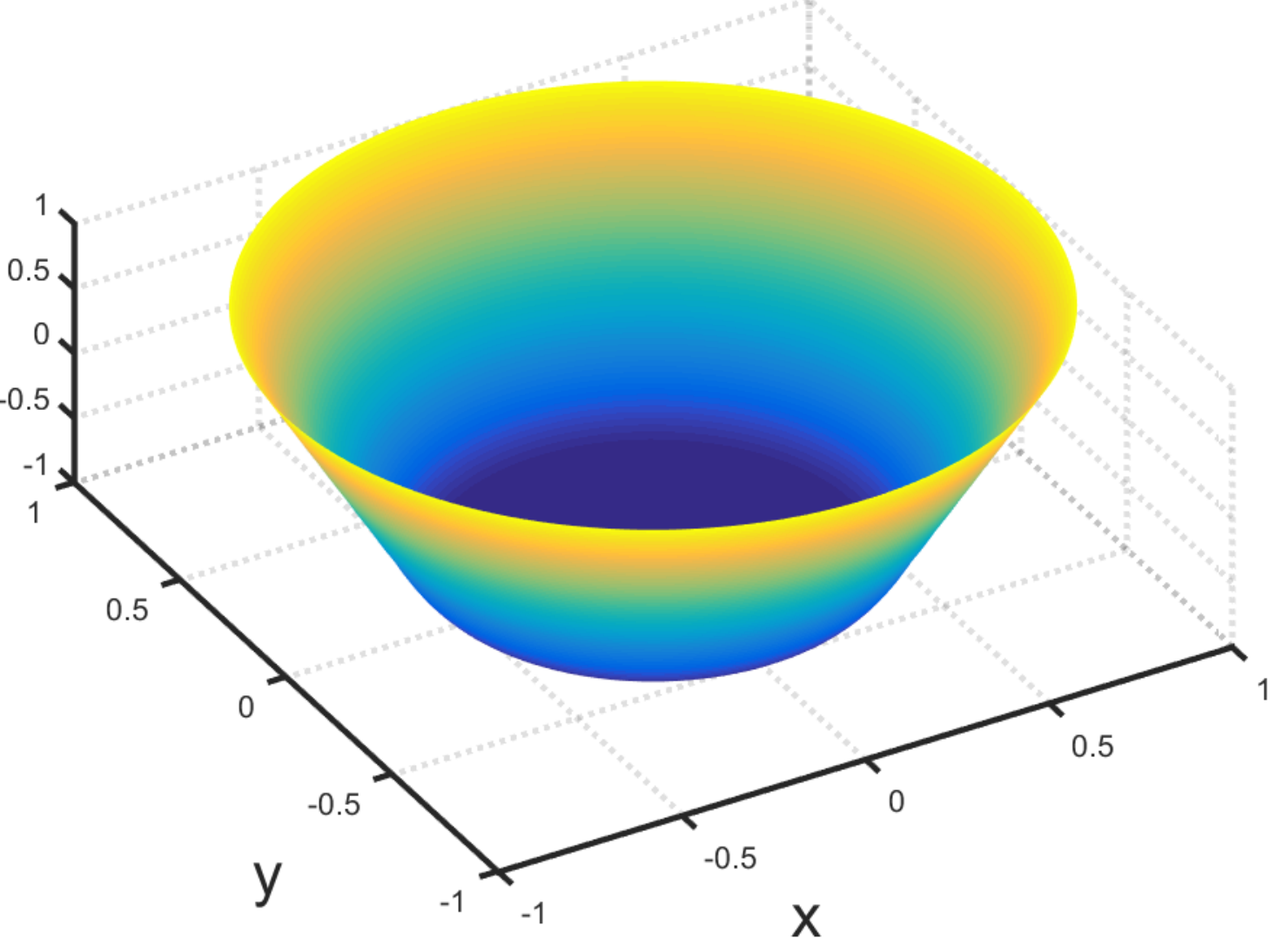}
\caption{\small \Cref{Ex:ex1},
left: plot of $f$; right: plot of $\uve$ for $h = 2^{-6}$.
}
\label{F:Ex1-function-plot}
\end{figure}

\begin{figure}[!htb]
\includegraphics[width=0.48\linewidth]{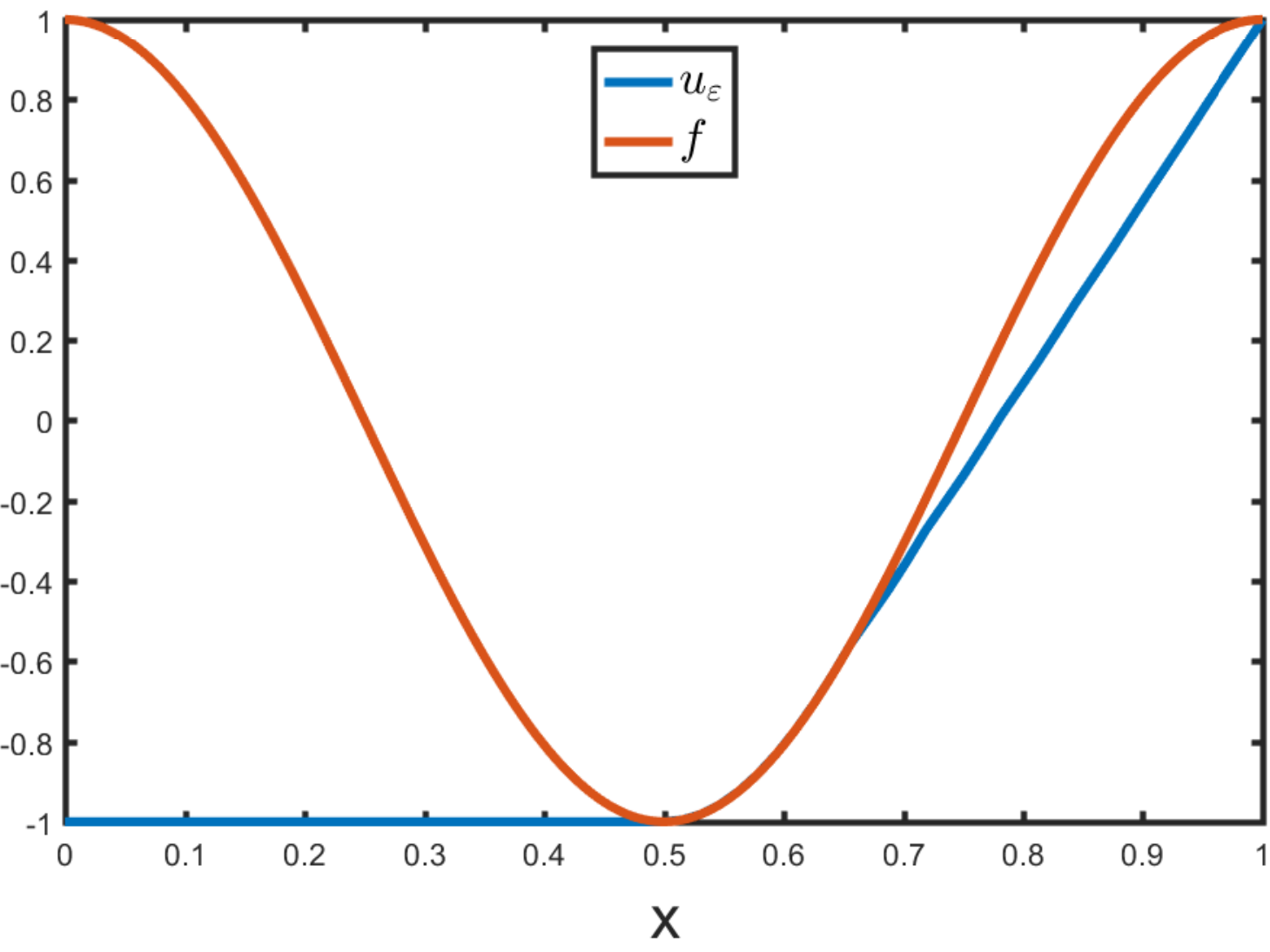}
\includegraphics[width=0.48\linewidth]{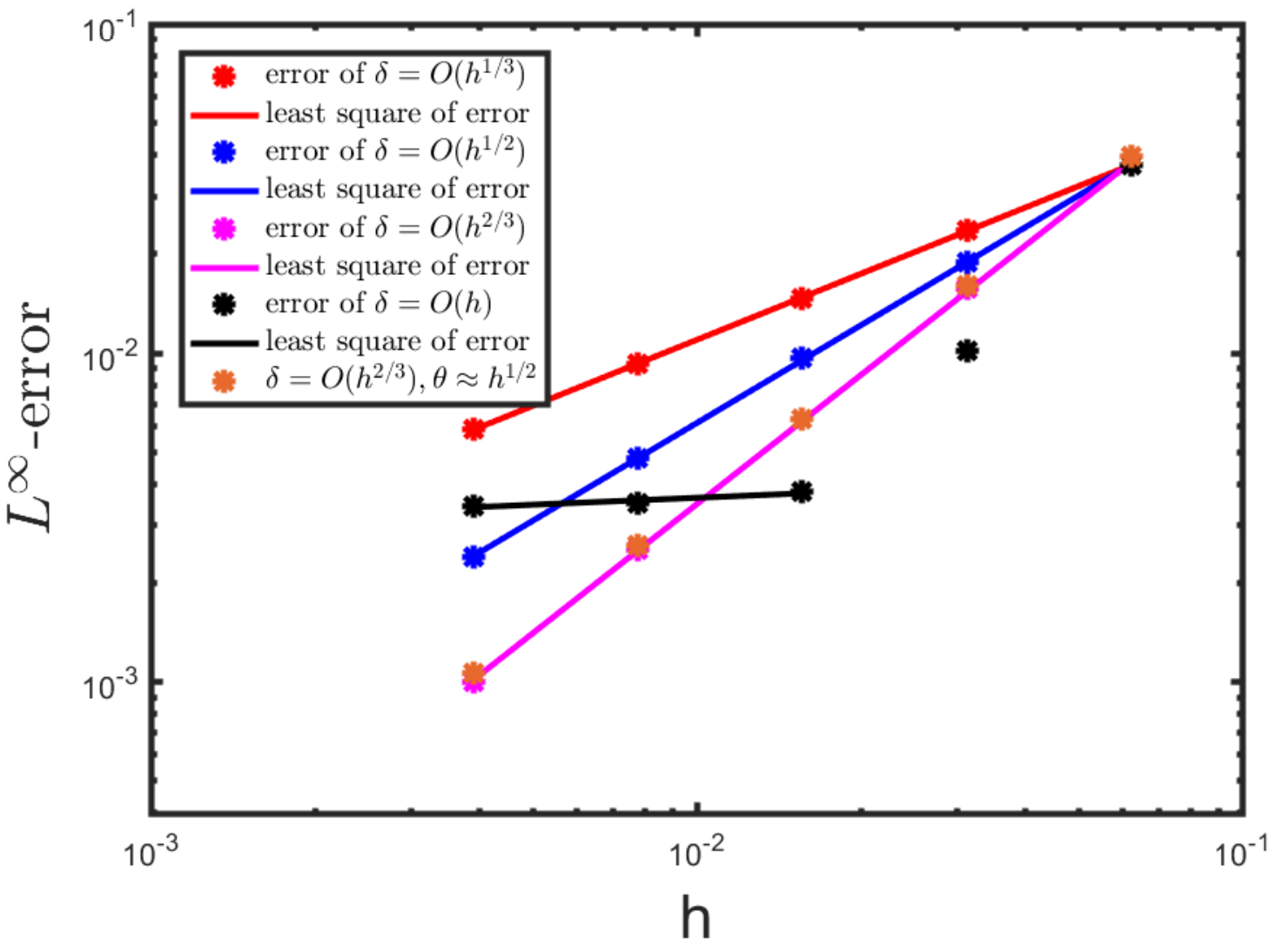}
\caption{\small
\Cref{Ex:ex1}. Left: slice of numerical solution $\uve$ on $\{(x,0): x \ge 0\}$ with $h = 2^{-6}, \delta = 0.25h^{1/2}, \theta \approx 0.25h^{1/2}$. Right: experimental rates of convergence
upon choosing $\theta \approx 0.25h^{1/2}$ and $\delta = O(h^{\alpha})$ with $\alpha = 1/3, 1/2, 2/3, 1$. A least square regression is performed for $h^{-k}$ with $k=6,7,8$ and
the case $\delta = O(h)$. The orders are about $0.67, 0.99, 1.30, 0.07$. We also plot the errors for $\theta \approx h^{1/2}, \delta =h^{2/3}$, and the errors are very close to choosing $\theta \approx 0.25h^{1/2}, \delta =h^{2/3}$.}
\label{F:Ex1}
\end{figure}
\end{example}

\begin{example}[Lipschitz regularity $u \in C^{0,1}(\Oc)$] \label{Ex:ex2}
Let $\Omega = \{x\in\mathbb{R}^2: |x|<1 \}$ and
\begin{equation*}
f(x) = \left\{
\begin{array}{ll}
1-4|x|, & 0 \leq |x| < 1/4 \\
4|x|-1, & 1/4 \leq |x| < 1/2 \\
2-2|x|, & 1/2 \leq |x| < 3/4 \\
2|x|-1, & 3/4 \leq |x| \leq 1,
\end{array}
\right. \qquad
u(x) = \left\{
\begin{array}{ll}
0, & 0 \leq |x| < 1/4 \\
|x|-1/4, & 1/4 \leq |x| < 3/4 \\
2|x|-1, & 3/4 \leq |x| \leq 1.
\end{array}
\right.
\end{equation*}
This example deals with $f, u \in C^{0,1}(\Oc)$, i.e.
both $f$ and $u$ are Lipschitz. The contact set $\mC(f)$ consists of two disjoint components
$\{x\in \mathbb{R}^2: |x| \ge 3/4 \}$ and $\{x\in \mathbb{R}^2: |x|=1/4 \}$. See \Cref{F:Ex2} (left) that displays
slices on $\{(x,0): 0\le x\le 1\}$ of $f,u$ and the numerical solution
$\uve$ with $h = 2^{-6}, \delta = 0.25 h^{1/2}, \theta \approx 0.25h^{1/2}$.
We point out that the pointwise error is very small in the regions
$\{x\in \mathbb{R}^2: |x| \ge 3/4 \}$ and $\{x\in \mathbb{R}^2: |x| \le 1/4 \}$; in the latter $u$ is linear and thus the interpolation error disappears. On the other hand, in the region $\{x\in \mathbb{R}^2: 1/4 < |x| < 3/4 \}$, where
$u$ is only linear in the radial direction, we observe larger error
for $\uve$.
Experimental convergence rates for different choices of $\delta = O(h^{\alpha})$ are plotted in
\Cref{F:Ex2} (right): we see that these rates are better than those predicted in \Cref{C:convergence-rate} (convergence rate).
This theoretical rate can be improved upon exploiting that 
both functions $f$ and $u$ are non-smooth only at $\{0\}$ and across the curves
$\{|x|=1/4\}$ and $\{|x|=3/4\}$. In fact, for those $x_i \in \Nhi$ satisfying $\big| |x_i|-1/4 \big| \leq \delta$ or $\big| |x_i|-3/4 \big| \leq \delta$, according to \Cref{Prop:Consistency-lowreg} (consistency for $u$ with H\"older regularity), we have
\begin{equation*}
\Tve[\interp u;f](x_i) \leq f(x_i) - u(x_i) \leq C(u) \delta,
\end{equation*}
whereas for the rest of $x_i \in \Nhi$ the consistency error can be estimated
exactly as for $f, u \in C^{1,1}(\Oc)$. Therefore carrying out the
same analysis as in \Cref{T:error-estimate} (error estimate), we end up with the
error estimate
\begin{equation*}
\|u-\uve\|_{L^\infty(\Oh)} \le C(u)
\l( \delta + \frac{(\delta \theta)^2 + h^2}{\delta^2}\r).
\end{equation*}
This yields a rate $O(h^{2/3})$ provided $\delta = O(h^{2/3})$, which is twice better than the rate from \Cref{C:convergence-rate} but still worse than the experimental ones in \Cref{F:Ex2} (right).

\begin{figure}[!htb]
\includegraphics[width=0.48\linewidth]{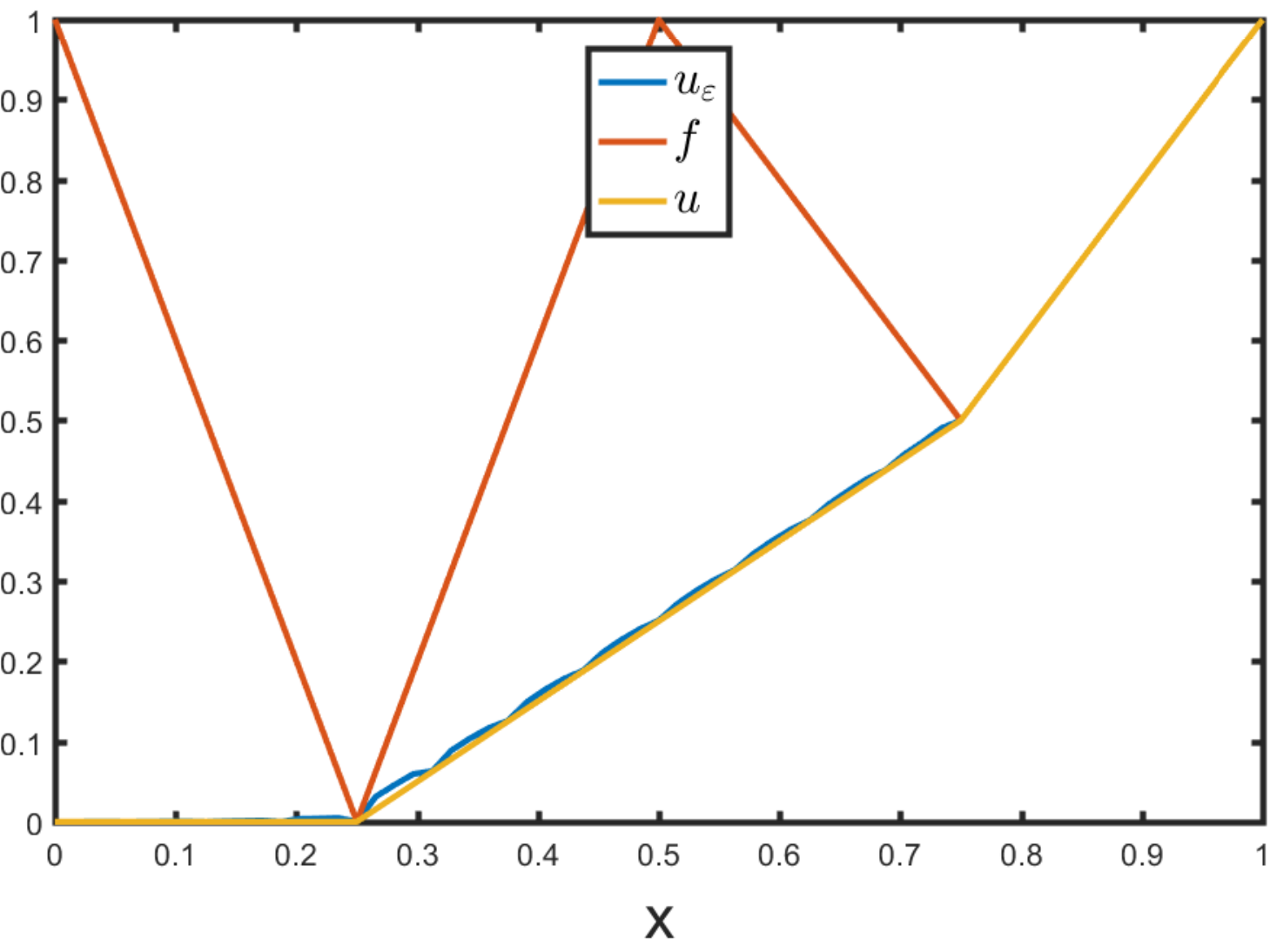}
\includegraphics[width=0.48\linewidth]{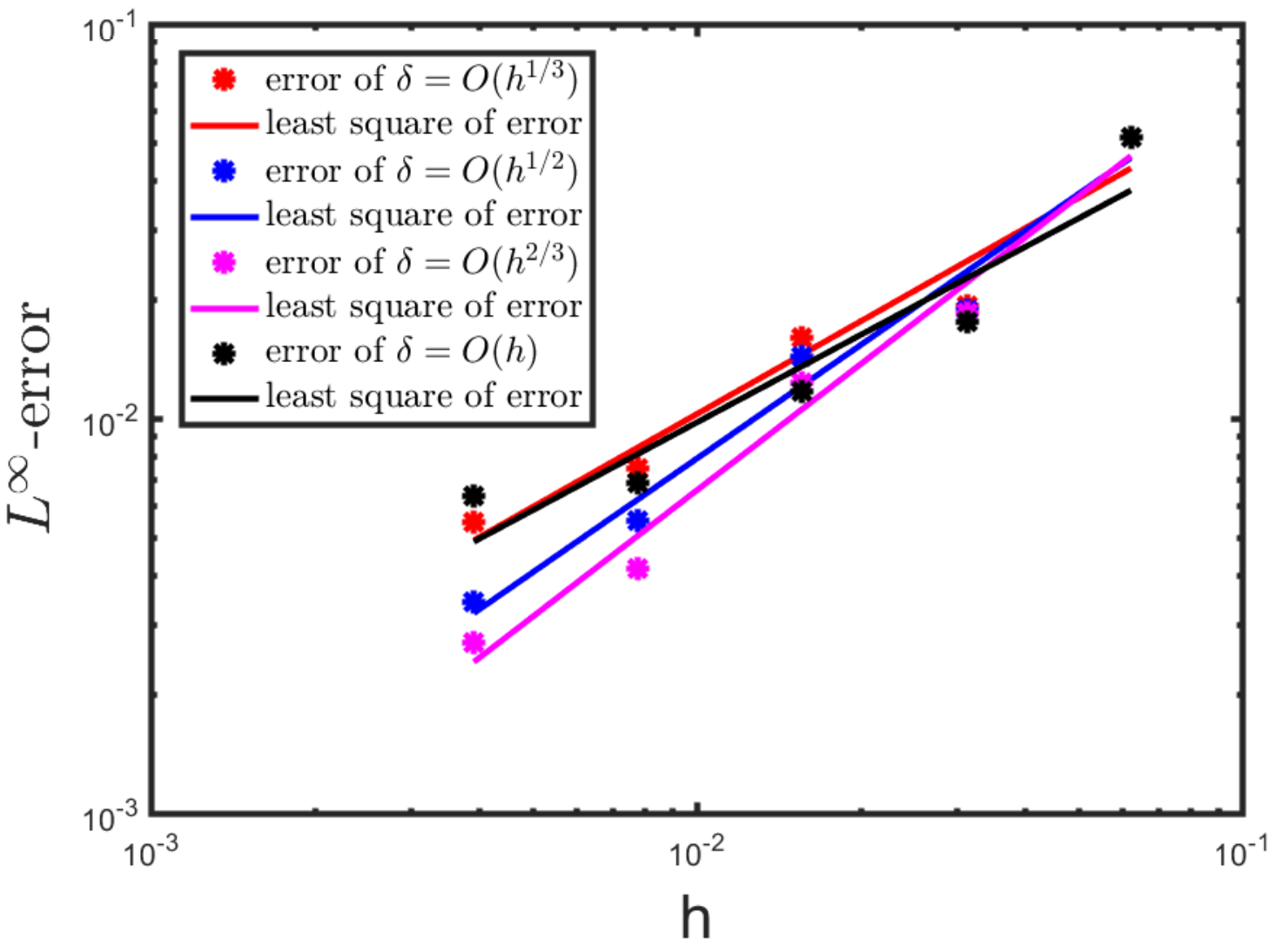}
\caption{\small \Cref{Ex:ex2}. Left: slices of $f,u$ and numerical solution $\uve$ on $\{(x,0): x \ge 0\}$ with $h = 2^{-6}, \delta = 0.25h^{1/2}, \theta \approx 0.25h^{1/2}$. Right: experimental rates of convergence upon choosing $\theta = O(h^{1/2})$ and $\delta = O(h^{\alpha})$ with $\alpha = 1/3, 1/2, 2/3, 1$. The orders are about $0.78, 0.96, 1.06, 0.74$.}
\label{F:Ex2}
\end{figure}
\end{example}

\begin{example}[Lipschitz $u \in C^{0,1}{(\overline{\Omega})}$ and nonstrictly convex $\Omega$]\label{Ex:ex3} Let $\Omega = (-1,1)^2$ and $f,u$ be as in \cite[Example 6.3]{Ob2} with $\alpha = \beta = 1$, i.e.
\[
f(x,y) = xy \;,\qquad u(x,y) = |x+y| - 1 .
\]
We point out that the Dirichlet boundary condition $u = f$ is attained on $\pO$ although the
domain $\Omega$ is not strictly convex, whence  \Cref{T:error-estimate} (error estimates) still applies.
In this example, $f$ is smooth but $u$ is only Lipschitz because $\Omega$ is not uniformly convex and non-smooth: $u$ exhibits a kink across the diagonal $\{(x,y): x+y=0\}$ and is piecewise linear otherwise. Moreover, $u<f$ in $\Omega$ whence the contact set $\mC(f)$ reduces to
$\partial\Omega$.

\Cref{F:Ex3} (left) displays slices on $\{(x,y): x \ge 0, \ y = x\}$ of $f,u$ and the numerical solution $\uve$ with $h = 2^{-6}, \delta = h^{1/2}, \theta \approx 0.25h^{1/2}$. One can observe a clear mismatch between $\uve$ and $u$ near the singular set $\{(x,y): x+y=0 \}$. Compared with \Cref{Ex:ex1} (full regularity $u\in C^{1,1}(\overline\Omega)$), the lack of regularity of $u$ here entails larger consistency error and $L^{\infty}$ error between $\uve$ and $u$. Experimental convergence rates for different choices of $\delta = O(h^{\alpha})$ are depicted in \Cref{F:Ex3} (right); we see that the best convergence rate $O(h^{0.58})$ is found when $\delta = O(h^{1/3})$,
which is again better than the $O(h^{1/3})$ rate predicted in \Cref{C:convergence-rate} (convergence rate).

\begin{figure}[!htb]
\includegraphics[width=0.495\linewidth]{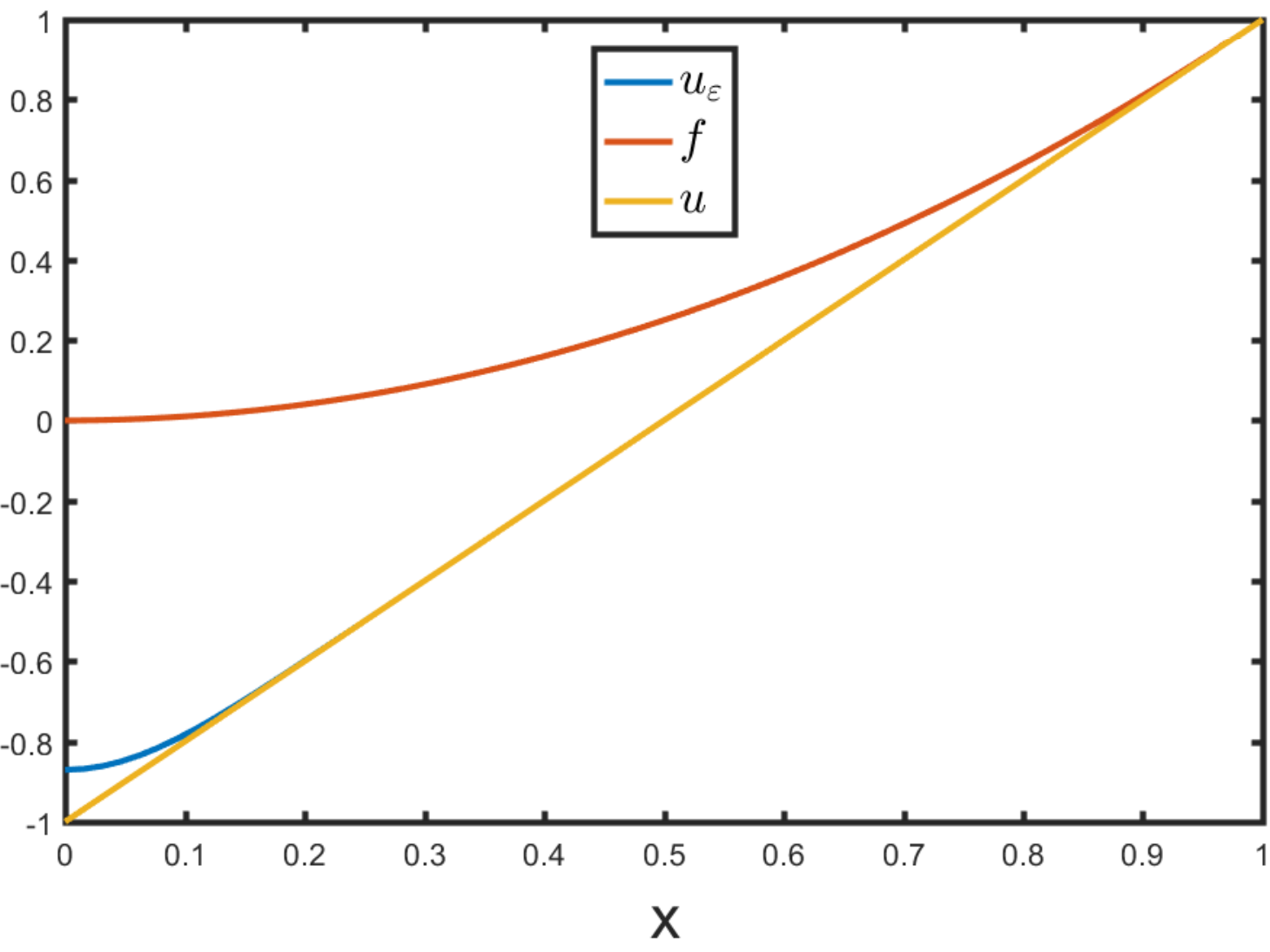}
\includegraphics[width=0.495\linewidth]{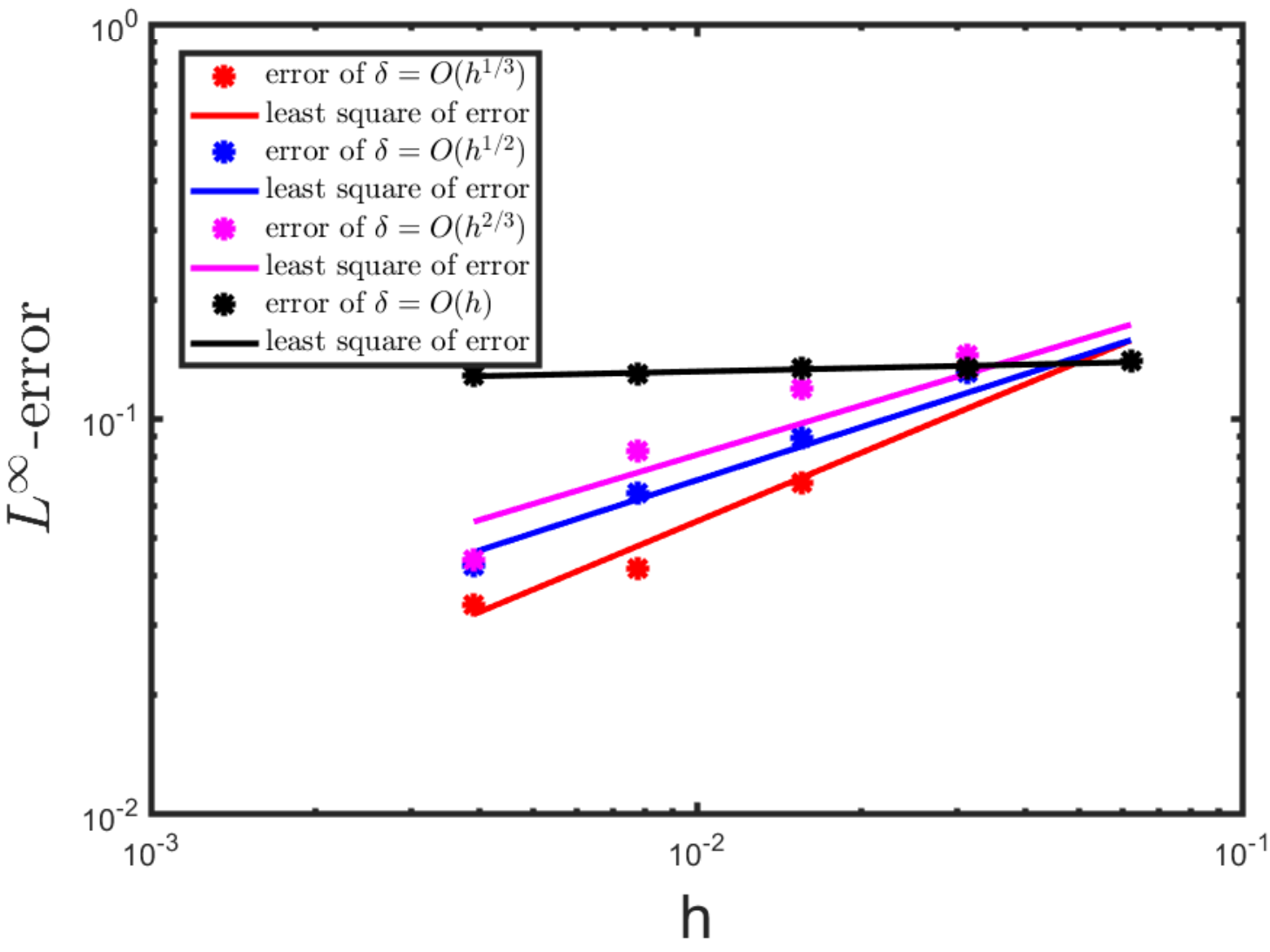}
\caption{\small \Cref{Ex:ex3}. Left: slice of numerical solution $\uve$ on $\{(x,y): x \ge 0, \ y = x\}$ with $h = 2^{-6}, \delta = h^{1/2}, \theta \approx 0.25h^{1/2}$. Right: experimental rates of convergence upon choosing $\theta = O(h^{1/2})$ and $\delta = O(h^{\alpha})$ with $\alpha = 1/3, 1/2, 2/3, 1$. The orders are about $0.58, 0.45, 0.41, 0.03$.}
\label{F:Ex3}
\end{figure}
 
\end{example}

\begin{example}[non-attainment of Dirichlet condition] \label{Ex:ex4}
Let $\Omega = (-1,1)^2$ and the function $f$ be $f(x,y) = \cos(\pi x)\cos(\pi y)$, whose restriction to $\pO$ is not convex. According to our definition \eqref{E:def-CE},
the convex envelope is given by
\begin{equation*}
u(x,y) =
  \begin{cases}
  -1    & \quad |x|+|y| \leq 1 \\
  -\cos \big(\pi (|x|+|y| - 1) \big) 
  &  \quad 1 < |x|+|y| \leq 1 + \beta_{*} \\
  -\cos \left(\pi \beta_* \right) +
  \pi \sin \left(\pi \beta_* \right) \big( |x|+|y| - 1 - \beta_* \big) 
  & \quad 1+\beta_{*} < |x|+|y|,
  \end{cases}
\end{equation*}  
where the constant $\beta_{*} \approx 0.2580$ satisfies the equation
\[
-\cos(\pi \beta_*) + \pi \sin(\pi \beta_*)(1 - \beta_*) = 1.
\]
This assertion requires a brief explanation. First
of all note that by symmetry it suffices to examine the first quadrant $0\le x,y \le 1$.
On the edges $\{y=1\}$ and $\{x=1\}$ the function $u$ is convex by construction and definition of $\beta_*$; see \Cref{F:Ex4} (left). Since $u$ is flat along lines $x+y=\beta$ and convex along perpendicular lines, we infer that $u$ is convex. It remains to show that $u\le f$ and $\ge$ than the convex envelope. To this end, we take convex combinations of boundary values $u(\beta-1,1)$ and $u(1,\beta-1)$ along the line $x+y=\beta$ with $1\le\beta\le2$ and show
that they are $\le f(x,y)$. For $\beta=1$ we realize that $u(x,y)=-1\le f(x,y)$ on $x+y=1$ and by symmetry for all $x+y\le1$. For $\beta>1$ a tedious calculation gives $u(x,y)=u(\beta-1,1) \le f(\beta-1,1) \le f(x,y)$ along $x+y=\beta$ as desired. We finally point out that the contact set $\mC(f)$ consists of four boundary segments of length $2\beta_*$
  centered at $(0,\pm 1), (\pm 1,0)$ and the four vertices $(\pm 1,\pm 1)$
  of $\Omega$; see \Cref{F:Ex4} (left). \looseness=-1

\begin{figure}[!htb]
\includegraphics[width=0.48\linewidth]{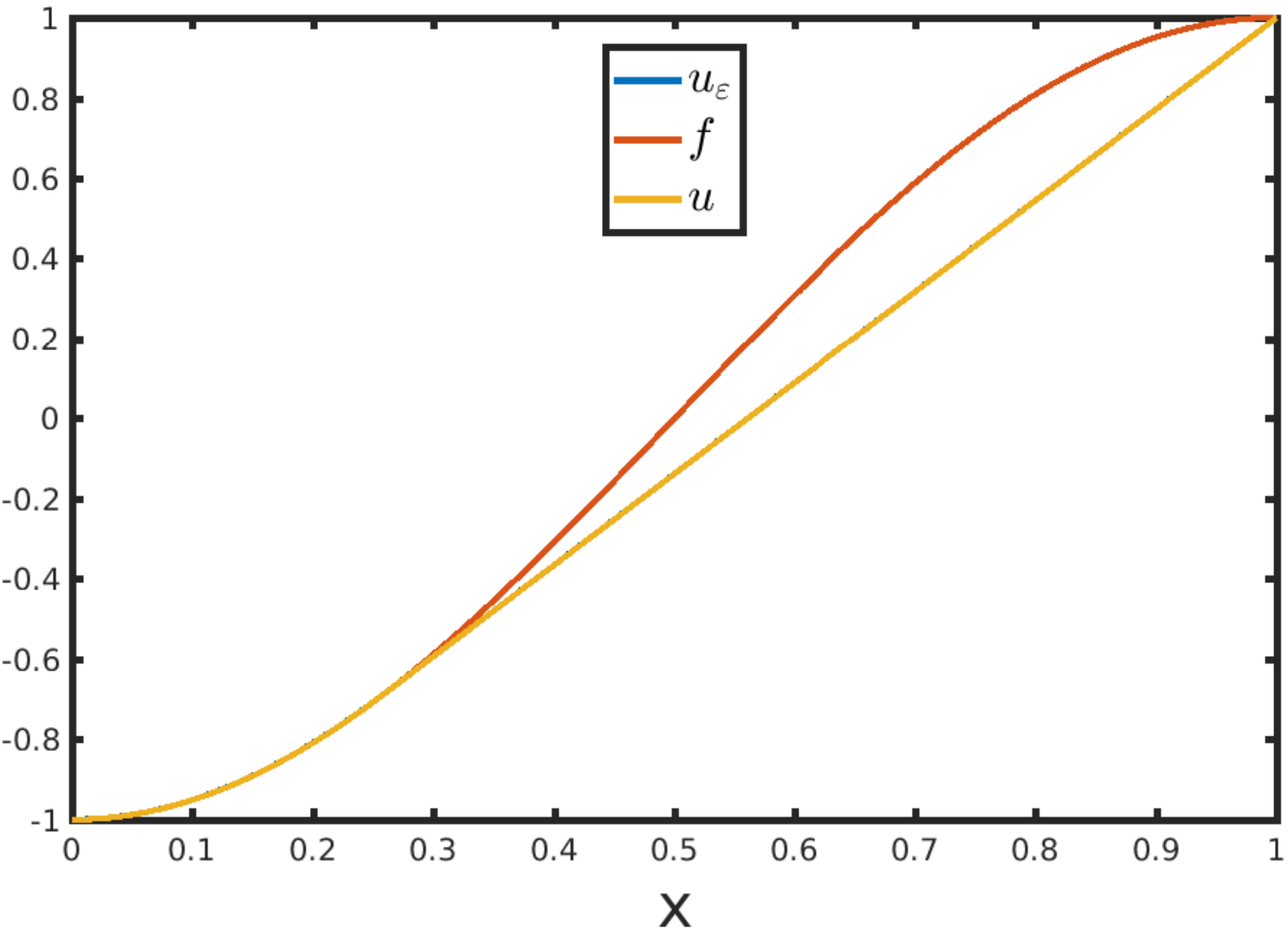}
\includegraphics[width=0.48\linewidth]{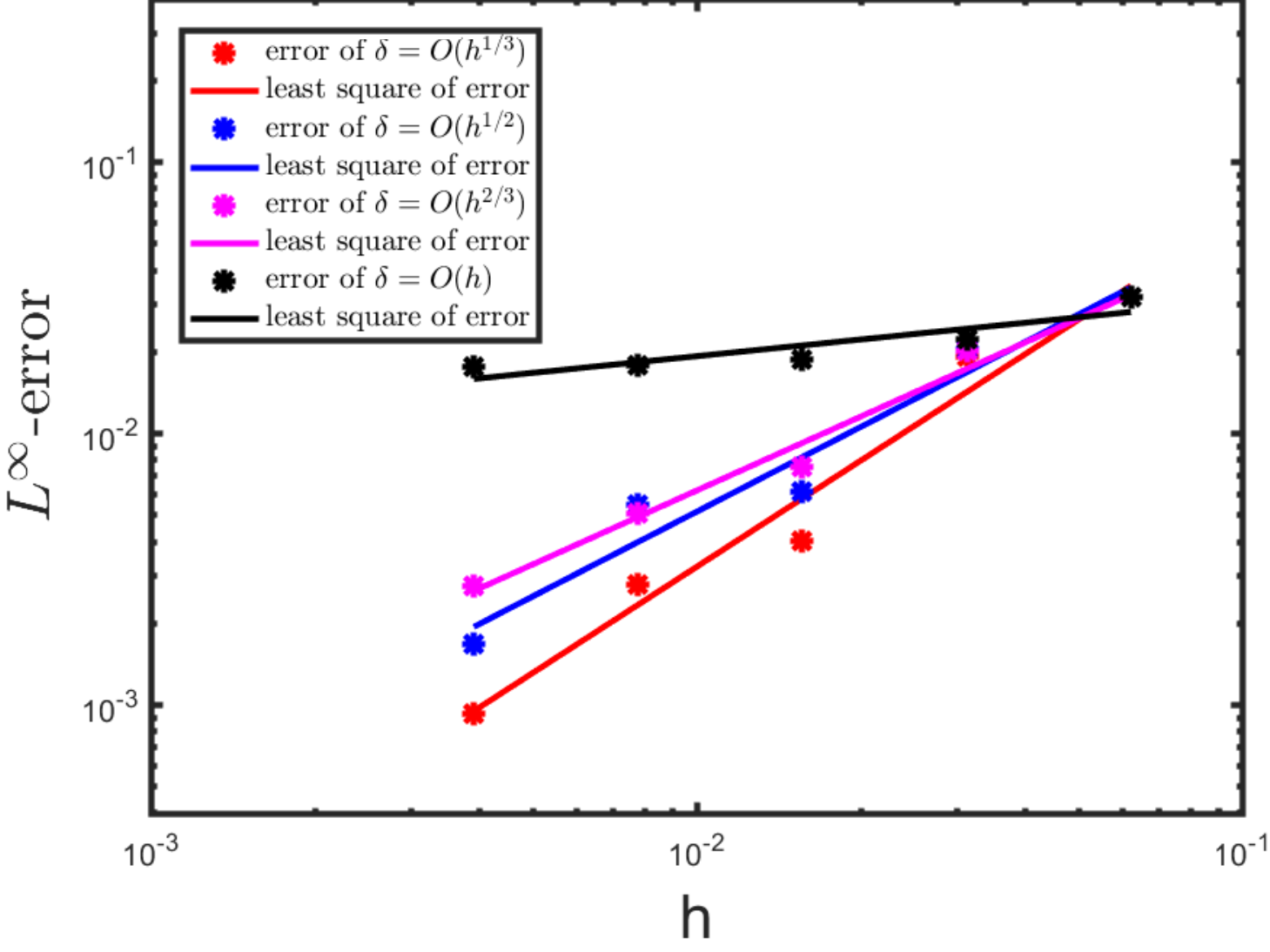}
\caption{\small \Cref{Ex:ex4}. Left: slices $f,u$ and
$\uve$ on the set $\{(x,1): x \ge 0 \}$ with $h = 2^{-6}, \delta = 2h^{1/2}, \theta \approx 0.5h^{1/2}$. Note that $\uve$ is indistinguishable from $u$ on this part of $\pO$. Right: experimental rates of convergence upon choosing $\theta = O(h^{1/2})$ and $\delta = O(h^{\alpha})$ with $\alpha = 1/3, 1/2, 2/3, 1$; the orders of convergence are about $1.30, 1.04, 0.91, 0.20$.}
\label{F:Ex4}
\end{figure}
We implemented the modified two-scale method \eqref{E:2ScOp-Ex4}, which first solves boundary subproblems on each edge of $\partial\Omega$ to find the trace of the discrete convex envelope $\uve$ and next determines $\uve$ within $\Omega$. \Cref{F:Ex4} (left) shows $f,u$
and $\uve$ on the boundary set $\{(x,1): 0\le x \le 1\}$; we point out that $u(x,1)=f(x,1)$ for $|x|\le \beta_*$. \Cref{F:Ex4} (right) displays the $L^{\infty}$ error for several choices of $h$ and $\delta$: we see that the experimental convergence rate is about $O(h)$ for $\delta=O(h^{1/2})$, in agreement with theory, but the rates for $O(h^{\alpha})$ with $\alpha = 1/3, 2/3$ seem to be better than those predicted in \Cref{C:convergence-rate} (convergence rate).
\end{example}

%------------------------------------------------------------------------------
\subsection{Computational performance}
%------------------------------------------------------------------------------
Thanks to the search tools provided by FELICITY \cite{Walker1,Walker2}, the process of locating the triangle of the mesh containing points $x_i \pm \delta_i v_j$ and computing the barycentric coordinates only takes a small percentage of the total computing time; this is consistent with the two-scale method for the \MA equation in \cite{NoNtZh1}. In \Cref{Ex:ex1} for $h = 2^{-6}, \delta = 0.25h^{1/2}, \theta \approx 2h^{1/2}$, this process is 6.7\% ($<$ 4 sec) of the total computation time (56.2 sec).
The most time consuming part of the experiment is constructing and solving the linear systems, i.e. the third line in \Cref{alg:Howard}; this takes 53.2\% of the total time. We do not attempt to exploit the sparsity pattern of the matrix $B^{\ba}$ and simply resort to MATLAB backslash command for solving linear systems; we leave this important issue open. All of our computations are performed on an Intel Xeon E5-2630 v2 CPU (2.6 GHz), 16 GB RAM using MATLAB R2016b.

%------------------------------------------------------------------------------
\subsection{Comparison with other existing methods}
%------------------------------------------------------------------------------
In this subsection, we briefly compare our two-scale method with two other methods for the computation of convex envelopes: the wide stencil method in \cite{Ob2} and the modified version of Dolzmann's method in \cite{Bartels}. Both the wide stencil method and our two-scale method are derived from the PDE formulation \eqref{E:pde-CE}, and have a discrete operator with similar structure. As explained in \Cref{S:modified-wide-stencil}, the wide stencil method can be viewed as a two-scale method with no interpolation error but with the constraint
$\theta \approx h/\delta$. Our two-scale method suffers from the interpolation error but allows some freedom in the choice of parameters and works well on unstructured grids, which provide geometric flexibility to fit the boundary $\pO$.

The modified version of Dolzmann's method in \cite{Bartels}, built for the computation of rank-one convex envelopes of functions defined on $\mathbb{R}^{n \times m}$, can be applied to compute the convex envelope by simply letting $m = 1$. When applied to compute convex envelopes, the technique of \cite{Bartels} hinges on the following algorithm: if $f^{(0)} = f$, and $f^{(k)}$ for $k \ge 1$ is iteratively defined as
\begin{equation}\label{E:iter-CE}
\begin{aligned}
f^{(k)}(x) = \inf\{ & \lambda f^{(k-1)}(x_1) + (1 - \lambda) f^{(k-1)}(x_2): \\ 
& \lambda \in [0,1], x_1, x_2 \in \mRd, \lambda x_1 + (1-\lambda) x_2 = x\},
\end{aligned}
\end{equation}
then the convex envelope $u = f^{(d)}$ by Carath\'{e}odory's theorem. Consequently, at the continuous level this process terminates in at most $d$ iterations. The method in \cite{Bartels} is a discrete version of this iteration on a structured grid $h\mZd$ with interpolation on the finer grid $h^2 \mZd$, namely $x\in h\mZd$ but $x_1,x_2\in h^2\mZd$ in \eqref{E:iter-CE}. This is thus a two-scale method, with coarse scale $h$, but conceptually different from ours because it does not solve a PDE but rather an algebraic iteration. Moreover, it assumes $u = f$ in a layer $\{x \in \Omega:  \dist(x,\pO) \le Ch \}$ near the boundary $\pO$ to deal with nodes in this region. 

Regarding convergence rates, both the method in \cite{Bartels} and our two-scale method exhibit provable linear rates with respect to the coarse scale for solutions $u\in C^{0,1}(\overline\Omega)$ according to \Cref{R:two-scenarios} (two important scenarios); moreover, \Cref{R:two-scenarios} also shows that our method is quadratic in the coarse scale $\delta$ and linear in the fine scale $h$ for $u \in C^{1,1}(\overline\Omega)$ .
Performing $d$ iterations of the discrete version of \eqref{E:iter-CE} is enough for linear convergence, whereas those for Howard's method cannot be quantified a priori. However, practice reveals that $10$ iterations of Howard's method are enough for convergence, which is consistent with its superlinear structure. Our iterations are simpler than those in \cite{Bartels} because they require much fewer interpolation points. Finally, our two-scale method is designed to work on unstructured meshes and deal with the Dirichlet boundary condition in a natural fashion. The boundary layer effect is handled via discrete barrier functions.
 
%----------------------------------------------------------------------------------
\section*{Acknowledgement}
%----------------------------------------------------------------------------------
We are grateful to Dimitrios Ntogkas for allowing us to modify his codes on the two-scale method for the \MA equation to solve the convex envelope problems.

%%%%%%%%%%%%%%%%%%%%%%%%%%%%%%%%%%%%%%%%%%%%%%%%%%%%%%%%%%%%%%%%%%%%%%%%%%%%%%%%%%%%
\bibliographystyle{amsplain}
%%%%%%%%%%%%%%%%%%%%%%%%%%%%%%%%%%%%%%%%%%%%%%%%%%%%%%%%%%%%%%%%%%%%%%%%%%%%%%%%%%%%

%%%%%%%%%%%%%%%%%%%%%%%%%%%%%%%%%%%%%%%%%%%%%%%%%%%%%%%%%%%%%%%%%%%%%%%%%%%%%%%%%%
\end{document}